\newcommand{\nc}{\newcommand}
\nc\cA{\mathcal{A}}
\nc\cB{\mathcal{B}}
\nc\cC{\mathcal{C}}
\newcommand{\cE}{\mathcal{E}}
\newcommand{\cH}{\mathcal{H}}
\newcommand{\cM}{\mathcal{M}}
\nc\bK{\mathbb{K}}
\nc\bL{\mathbb{L}}
\nc\bM{\mathbb{M}}
\nc\bN{\mathbb{N}}
\nc\bO{\mathbb{O}}
\nc\bP{\mathbb{P}}
\nc\bQ{\mathbb{Q}}
\nc\bR{\mathbb{R}}
\nc\bS{\mathbb{S}}
\nc\bT{\mathbb{T}}
\nc\bU{\mathbb{U}}
\nc\bV{\mathbb{V}}
\nc\bW{\mathbb{W}}
\nc\bZ{\mathbb{Z}}
\theoremstyle{plain}
\newtheorem{theorem}{Theorem}[section]
\newtheorem{lemma}[theorem]{Lemma}
\newtheorem{proposition}[theorem]{Proposition}
\newtheorem{corollary}[theorem]{Corollary}
\newtheorem*{definition}{Definition}
\newtheorem{example}[theorem]{Example}
\newtheorem*{question}{Question}
\newtheorem*{remark}{Remark}
\newtheorem*{convention}{Convention}
\title{Affine Manifolds and Zero Lyapunov Exponents in Genus $3$}
\author{David Aulicino\thanks{This material is based upon work supported by the National Science Foundation under Award No. DMS - 1204414, and later by the ERC Starting Grant ``Quasiperiodic'' of Artur Avila.}}
\date{}
\begin{document}

\newcommand{\splin}{$\text{SL}_2(\mathbb{R})$}
\newcommand{\spolin}{$\text{SO}_2(\mathbb{R})$}

\maketitle

\begin{abstract}
In previous work, the author fully classified orbit closures in genus three with maximally many (four) zero Lyapunov exponents of the Kontsevich-Zorich cocycle.  In this paper, we prove that there are no higher dimensional orbit closures in genus three with any zero Lyapunov exponents.  Furthermore, if a Teichm\"uller curve in genus three has two zero Lyapunov exponents in the Kontsevich-Zorich cocycle, then it lies in the principal stratum and has at most quadratic trace field.  Moreover, there can be at most finitely many such Teichm\"uller curves.
\end{abstract}

\tableofcontents

\section{Introduction}

It is well known that the Teichm\"uller geodesic flow in the moduli space of Riemann surfaces admits a collection of $6g-6$ Lyapunov exponents.  Since \cite{KZHodge}, it was realized that the ``non-trivial exponents'' could be understood via the $2g$ Lyapunov exponents of a cocycle over the Teichm\"uller geodesic flow on the first cohomology bundle over the moduli space, now known as the Kontsevich-Zorich cocycle.   This perspective led to the work of \cite{ForniDev} and \cite{AvilaVianaSimp}, who proved the spectral gap and simplicity of the spectrum of Lyapunov exponents with respect to the Masur-Veech measures on strata of holomorphic Abelian differentials.  Explicit formulas for sums of the exponents were found in \cite{EskinKontsevichZorich2}.  However, except for certain special situations \cite{EskinKontsevichZorichSqTiled, BouwMoeller, WrightSchwarzAbSqTilSurfs, WrightSchwarzVWBMCurves}, formulas for individual exponents are not known.

Since then, examples were found of Teichm\"uller curves with Lyapunov exponents equal to zero \cite{ForniHand, ForniMatheusZorichLyapSpectHodge, ForniMatheusZorichZeroLyapExpsHodge, EskinKontsevichZorichSqTiled, WrightSchwarzAbSqTilSurfs}, thus demonstrating that the positivity of the top $g$ Lyapunov exponents of the cocycle is false in general.  It is possible to construct higher dimensional affine manifolds with zero Lyapunov exponents as well, e.g. \cite{GrivauxHubertFullDegQuad}.

The goal of this paper is to try to get a complete understanding of \splin -orbit closures in the moduli space of genus three Abelian differentials with zero Lyapunov exponents.  Since the Kontsevich-Zorich cocycle respects the symplectic intersection form on cohomology, the spectrum of $2g$ exponents is symmetric.  Furthermore, the largest exponent is always equal to $1$.  Therefore, in genus three, the number of Lyapunov exponents equal to zero is either $0$, $2$, or $4$.

The case of four zero Lyapunov exponents is completely understood through the work of \cite{AulicinoCompDegKZ, AulicinoCompDegKZAIS, MollerShimuraTeich}.  One of the key ingredients in all of these works is that the existence of maximally many zero Lyapunov exponents implies that a specific mechanism produced them, namely a maximal Forni subspace in the cohomology bundle.

A conjecture explaining the mechanisms for producing zero exponents was proposed in \cite{ForniMatheusZorichZeroLyapExpsHodge}.  Recently, \cite{FilipZeroExps} proved that if the cocycle admits a zero Lyapunov exponent, then it must satisfy one of several properties on a list.  In genus three, this list coincides exactly with the mechanisms proposed in \cite{ForniMatheusZorichZeroLyapExpsHodge}.  Precisely speaking, in genus three the only mechanisms for producing zero Lyapunov exponents of the Kontsevich-Zorich cocycle are the $H^1$ bundle having non-trivial Forni subspace, i.e. the algebraic hull has a compact factor, or the algebraic hull of the Kontsevich-Zorich cocycle having an $\text{SU}(p,q)$ factor, where $p > q$.  This follows simply from the fact that the other mechanisms require the dimension of $H^1$ to be greater than six.  In fact, this can be strengthened further.

\begin{proposition}
\label{Gen3ZeroImpForni}
Let $\mathcal{M}$ be an \splin -orbit closure in the moduli space of Abelian differentials on genus three surfaces.  If the Kontsevich-Zorich spectrum of $\mathcal{M}$ has Lyapunov exponents equal to zero, then the cohomology bundle over $\mathcal{M}$ admits a Forni subspace of dimension equal to the number of zero Lyapunov exponents of the Kontsevich-Zorich cocycle.
\end{proposition}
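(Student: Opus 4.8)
The plan is to leverage the dichotomy recalled immediately above (from \cite{FilipZeroExps}, matching \cite{ForniMatheusZorichZeroLyapExpsHodge} in genus three): every zero Lyapunov exponent is forced either by a compact factor of the algebraic hull, that is, by a nontrivial Forni subspace, or by an $\mathrm{SU}(p,q)$ factor with $p>q$. Since a nontrivial Forni subspace is by definition a compact factor, it suffices to rule out all $\mathrm{SU}(p,q)$ factors with $p>q$ that are genuinely non-compact, and then to verify that the resulting Forni subspace has the asserted dimension. Throughout I would exploit the tautological subbundle $T\subset H^1$ spanned by the real and imaginary parts of the Abelian differential: it is a cocycle-invariant symplectic plane on which the Kontsevich--Zorich cocycle carries the Lyapunov exponents $\pm 1$, so in particular $T$ is a proper invariant subbundle preserved by the algebraic hull, and every vanishing exponent occurs in its symplectic complement $T^{\perp}$ of real dimension $2g-2=4$.

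First I would bound the admissible $\mathrm{SU}(p,q)$ factors by dimension. Such a factor acts on a real subbundle of dimension $2(p+q)$, which embeds in $H^1$ of real dimension $2g=6$; hence $p+q\le 3$. Imposing $p>q\ge 1$ leaves only $(p,q)=(2,1)$, so the sole non-compact candidate is an $\mathrm{SU}(2,1)$ factor acting on a $6$-dimensional subbundle, necessarily all of $H^1$. Here is the decisive step: the standard representation of $\mathrm{SU}(2,1)$ on $\mathbb{C}^3$ is complex irreducible and is not isomorphic to its complex conjugate, hence it is irreducible as a real representation on $\mathbb{R}^6$ and admits no invariant real $2$-plane. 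This contradicts the presence of the invariant proper subbundle $T$. Therefore no $\mathrm{SU}(p,q)$ factor with $p>q\ge 1$ can occur, and the only surviving case with $p>q$ has $q=0$, namely a compact $\mathrm{SU}(p)$ factor, that is, a Forni subspace.

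It then remains to match dimensions. Writing $F$ for the Forni subspace, i.e.\ the subbundle on which the algebraic hull acts through its compact factor, the restriction of the cocycle to $F$ takes values in a compact group, so every Lyapunov exponent along $F$ vanishes. Conversely, by the dichotomy every zero exponent is produced by a compact factor or by an $\mathrm{SU}(p,q)$ factor with $p>q$; having excluded the latter, every zero exponent lies in $F$. Hence $\dim F$ equals the number of zero Lyapunov exponents, which is the assertion. This is consistent with the two admissible totals: a $4$-dimensional maximal Forni subspace in the four-zero case already treated in \cite{AulicinoCompDegKZ,AulicinoCompDegKZAIS,MollerShimuraTeich}, and a $2$-dimensional Forni subspace in the two-zero case.

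I expect the main obstacle to be the rigorous bookkeeping behind the irreducibility step, namely arguing that a genuine $\mathrm{SU}(2,1)$ factor of the algebraic hull really corresponds to an invariant subbundle on which the cocycle acts irreducibly over $\mathbb{R}$, and that the ever-present tautological plane $T$ cannot be absorbed into it. A robust alternative I would keep in reserve is to invoke reductivity of the algebraic hull so that $H^1$ splits symplectically as $T\oplus T^{\perp}$ with $T$ and any $\mathrm{SU}(p,q)$ summand non-isomorphic irreducible pieces; this confines the $\mathrm{SU}(p,q)$ factor to $T^{\perp}$ of real dimension $4$ and thereby sharpens the constraint to $p+q\le 2$, which already forbids $q\ge 1$ outright. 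Either route closes the gap, with the irreducibility remark being the more self-contained of the two.
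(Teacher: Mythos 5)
Your proposal is correct, and while its core exclusion step overlaps with the paper's, the overall organization is genuinely different and arguably more uniform. The paper's proof first disposes of the four-zero case by citing \cite{AulicinoCompDegKZ, AulicinoCompDegKZAIS}, then uses \cite[Cor. 1.3(i)]{FilipZeroExps} to split the two-zero case by the rank of $\mathcal{M}$: in rank two it locates the zero exponents in the $2$-dimensional complement of $p(T_{\mathbb{R}}(\mathcal{M}))$ and identifies that complement as a Forni subspace via the Forni--Kontsevich sum formula \cite{ForniDev}, while in rank one it excludes $\mathrm{SU}(p,q)$ with $p>q$ exactly along the lines of your \emph{reserve} argument: the $2$-dimensional flat tangent plane forces an $\mathrm{SL}(2)$ factor to split off the algebraic hull, leaving $p+q=2$, which is incompatible with $p>q\geq 1$. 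Your version works instead with the tautological plane $T$, which carries the exponents $\pm 1$ regardless of rank, so you avoid both the rank dichotomy and the sum-formula step, and you treat the two-zero and four-zero cases uniformly rather than handling the latter by citation --- a real streamlining, at the price of needing the dichotomy of \cite{FilipZeroExps} in the stronger form that \emph{every} zero exponent is produced by one of the two mechanisms (which the counting at the end requires, and which the paper also implicitly uses). Your primary route --- the standard representation of $\mathrm{SU}(2,1)$ on $\mathbb{C}^3$ is not isomorphic to its conjugate, hence irreducible over $\mathbb{R}$ on $\mathbb{R}^6$, hence admits no invariant $2$-plane, contradicting the invariance of $T$ --- is sound and does not appear in the paper; note that hull-invariance of $T$ is automatic, since the stabilizer of a cocycle-invariant subbundle is an algebraic group containing the hull. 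The semisimplicity input you correctly flag as the remaining bookkeeping (so that $T$ splits off symplectically and confines any $\mathrm{SU}(p,q)$ factor to $T^{\perp}$, forcing $p+q\leq 2$) is supplied by Filip's semisimplicity results for the Kontsevich--Zorich cocycle and is the same input the paper invokes when asserting that an $\mathrm{SL}(2)$ factor splits off; with either route closed, your dimension-matching paragraph agrees with the paper's use of the maximality of the Forni subspace.
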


The following theorem mostly confirms a conjecture of Carlos Matheus that the existence of a single zero Lyapunov exponent in genus three implies that there are maximally many.

\begin{theorem}
\label{MainThmTotal}
Let $\mathcal{M}$ be an \splin -orbit closure in the moduli space of Abelian differentials on genus three surfaces.  If the Kontsevich-Zorich spectrum of $\mathcal{M}$ has exactly two zero Lyapunov exponents, then $\mathcal{M}$ is a Teichm\"uller curve in the principal stratum $\mathcal{H}(1,1,1,1)$ with a $2$-dimensional Forni subspace, and trace field of degree at most two over $\mathbb{Q}$.  Furthermore, there are at most finitely many such Teichm\"uller curves.
\end{theorem}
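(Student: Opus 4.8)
The plan is to extract the Forni subspace immediately and then to constrain, in turn, the dimension of $\mathcal{M}$, its stratum, its trace field, and finally the count of such curves. By Proposition~\ref{Gen3ZeroImpForni}, the hypothesis of exactly two zero exponents yields a $2$-dimensional Forni subspace $\mathcal{F}\subseteq H^1$ on which the Kontsevich--Zorich cocycle acts isometrically for the Hodge norm, i.e.\ through a compact factor of the algebraic hull; equivalently $\mathcal{F}$ underlies a unitary sub-variation of Hodge structure. Its symplectic complement then splits $H^1(X;\mathbb{R})$ (real dimension $6$) as the tautological plane (exponents $\pm 1$), the Forni plane $\mathcal{F}$ (exponents $0,0$), and a remaining symplectic plane carrying $\pm\lambda_2$ with $0<\lambda_2<1$.

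First I would show $\dim_{\mathbb{C}}\mathcal{M}=2$. Writing $r$ for the rank of $\mathcal{M}$ in the sense of Wright, the image of the tangent space in absolute cohomology, $p(T_{\mathbb{C}}\mathcal{M})$, is the complexification of a real symplectic subspace $V\subseteq H^1(X;\mathbb{R})$ of dimension $2r$, containing the tautological plane and symplectically orthogonal to $\mathcal{F}$. A dimension count immediately rules out $r=3$, since then $V$ would exhaust $H^1(X;\mathbb{R})$ and leave no room for $\mathcal{F}$. The remaining possibilities --- rank $2$, and rank $1$ with nontrivial relative (REL) directions --- are \emph{not} excluded on dimension grounds alone, and eliminating them is the crux of the argument: I would invoke the classification of higher-dimensional orbit closures in genus three and verify, stratum by stratum and using the Eskin--Kontsevich--Zorich sum formula, that none of them carries a zero exponent. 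This is the main obstacle, since the rank-$2$ ``balanced splitting'' $H^1(X;\mathbb{R})=V\oplus\mathcal{F}$ is perfectly consistent with the exponent bookkeeping and must be ruled out by the finer geometry of those orbit closures rather than by a soft count.

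Granting that $\mathcal{M}$ is a Teichm\"uller curve, I would next produce a fixed factor and bound the trace field. The compact piece acting on $\mathcal{F}$ is the unique unitary part of $H^1$, hence Galois-stable and defined over $\mathbb{Q}$ by Filip's semisimplicity; preserving the integral lattice $H^1(X;\mathbb{Z})$ with bounded image, its monodromy is then a finite group, so after a finite cover $\mathcal{F}$ is the fixed part of the variation. Thus $\mathcal{F}$ comes from a \emph{constant} elliptic-curve factor and $\operatorname{Jac}(X)$ is isogenous to $E\times B$ with $E$ fixed along the curve and $B$ an abelian surface carrying the tautological and $\pm\lambda_2$ parts. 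M\"oller's real multiplication then acts on $H^1(B;\mathbb{Q})$, of dimension $4$, so the trace field $k$ satisfies $2[k:\mathbb{Q}]\le 4$, i.e.\ $[k:\mathbb{Q}]\le 2$; this is precisely where the algebraically primitive, cubic-trace-field case is excluded, the point being that the Forni plane cannot be a real-multiplication Galois-conjugate of the maximal Higgs tautological plane.

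For the stratum, the finite automorphism $\tau$ generating the monodromy of $\mathcal{F}$ is an automorphism of the translation surface; it permutes the zeros of $\omega$ and, together with the splitting $\operatorname{Jac}(X)\sim E\times B$, constrains their multiplicities. I would rule out each of $\mathcal{H}(4)$, $\mathcal{H}(3,1)$, $\mathcal{H}(2,2)$ and $\mathcal{H}(2,1,1)$ by combining the known classifications of Teichm\"uller curves in those strata with the sum formula and the presence of the fixed factor, forcing $\omega$ into the principal configuration $\mathcal{H}(1,1,1,1)$. Finally, finiteness follows because such curves have nongeneric algebraic hull (a compact factor) and trace field of degree at most two, placing them outside the known infinite families, which live in lower strata; I would conclude via the finiteness theorems for Teichm\"uller curves with constrained algebraic hull (Eskin--Filip--Wright, Matheus--Wright) together with the Shimura--Teichm\"uller analysis of \cite{MollerShimuraTeich}.
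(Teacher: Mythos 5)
Your opening reduction is the paper's: Proposition~\ref{Gen3ZeroImpForni} gives the $2$-dimensional Forni subspace, and rank three is excluded by the orthogonality in Theorem~\ref{ForniConst}. But the crux of your argument --- ``invoke the classification of higher-dimensional orbit closures in genus three and verify, stratum by stratum and using the Eskin--Kontsevich--Zorich sum formula, that none of them carries a zero exponent'' --- is a genuine gap, for two reasons. First, no such classification exists where it is needed: rank two manifolds are classified only in strata with at most two zeros \cite{NguyenWright, AulicinoNguyenWright, AulicinoNguyenGen3TwoZeros}, leaving the principal stratum open, and rank one affine manifolds with non-trivial REL are not classified at all in genus three. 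Second, the sum formula cannot detect the situation at hand: it computes $\lambda_1+\lambda_2+\lambda_3 = 1+\lambda_2$, which is consistent with $\lambda_3=0$ for any admissible value of the sum, so ``exactly two zero exponents'' is invisible to it. Closing exactly this gap is the content of Sections~\ref{RELDefSect}--\ref{Rank2PrinStrSect} of the paper: for rank one with non-trivial REL, one produces REL twists and stretches (Theorem~\ref{RELDefs}), converts the constancy of the Forni bundle \cite{AvilaEskinMollerForniBundle} into the statement that Forni classes vanish on core curves of \emph{all} cylinders (Lemma~\ref{ForniVanCyls}, Corollary~\ref{HomBasisForniContraCor}), enumerates cylinder configurations via Forni's geometric criterion (Lemma~\ref{Gen3TopConfigs}), and degenerates to lower strata while preserving the Forni subspace (Lemma~\ref{DetLocusClosed}); for rank two in the principal stratum, one collapses free simple cylinders using \cite{WrightCylDef} to reach the same contradiction. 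Your proposal defers the entire difficulty to tools that either do not exist or provably do not suffice.

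Two secondary points. The exclusion of Teichm\"uller curves from the non-principal strata is not done via classifications of Teichm\"uller curves there (arithmetic curves in those strata are unclassified); it is the Harder--Narasimhan filtration result of \cite{BainbridgeHabeggerMollerHNFilt} (Theorem~\ref{NoZeroInNonVary}). Your trace-field sketch (finite monodromy on $\mathcal{F}$, splitting of the Jacobian, real multiplication on the complement) is in the right spirit but heavier than necessary and leaves the key assertion --- that the Forni plane is not a Galois conjugate of the tautological plane --- unproved; the paper gets the bound in one step from \cite[Cor.~3]{ForniCriterion}, since cubic trace field means algebraically primitive and such curves have no zero exponents. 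Finally, your finiteness step cites external theorems that postdate or do not cover this setting; the correct argument, available once the higher-dimensional cases are excluded, is that infinitely many such curves would equidistribute to a higher-dimensional affine manifold by \cite{EskinMirzakhaniMohammadiOrbitClosures}, which would inherit a non-trivial Forni subspace because the determinant locus is closed --- a contradiction.
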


In fact, the Eierlegende Wollmilchsau is the only known Teichm\"uller curve in $\mathcal{H}(1,1,1,1)$ with zero Lyapunov exponents.  By \cite{McMullenDecagonUnique}, the decagon generates the only Teichm\"uller curve in $\mathcal{H}(1,1)$ with quadratic trace field.  By inspection, the core curves of the cylinders of any unramified double cover of the decagon span a Lagrangian subspace of homology, thus they have no zero exponents by the Forni Criterion \cite{ForniCriterion}.  Furthermore, computer experiments by Vincent Delecroix have shown that there are no square-tiled surfaces with fewer than $19$ squares with zero exponents, other than the Eierlegende Wollmilchsau.

\begin{question}
Are there any Teichm\"uller curves in the principal stratum in genus three with exactly two zero Lyapunov exponents?
\end{question}

The proof of this theorem requires combining nearly all known techniques for studying zero Lyapunov exponents of the Kontsevich-Zorich cocycle.  However, new techniques must be developed because all of the present methods do not suffice to prove this theorem.  The most significant new technique is the conversion of the fact that the Forni bundle is constant into information about the flat surfaces contained in the orbit closure.

The starting point is the application of \cite{FilipZeroExps} in Proposition \ref{Gen3ZeroImpForni} to prove that zero Lyapunov exponents imply the existence of a Forni subspace.  From here the proof is split into two cases: rank one and rank two affine manifolds.  The work of \cite{YuZuoWeierstrassFiltLyapExps} followed by \cite{BainbridgeHabeggerMollerHNFilt} on the Harder-Narasimhan filtration, allows us to exclude Teichm\"uller curves outside of the principal stratum cf. Theorem \ref{NoZeroInNonVary}.  However, the Harder-Narasimhan filtration perspective says nothing about Lyapunov exponents of rank one affine manifolds in the principal stratum.

Next, we consider the flat structure of translation surfaces in such an affine manifold, and the Forni Criterion allows us to enumerate the configurations of cylinders, cf. Table \ref{MainConfigTable}.  However, there are well over $30$ cylinder diagrams so the combinatorics of studying each individual diagram is impossible, especially given the number of potentially free parameters on each cylinder diagram.  Thus, the Forni Criterion alone cannot prove Theorem \ref{MainThmTotal}.

For rank two affine manifolds, a simple argument excludes them outside of the principle stratum, cf. Theorem \ref{NoAISLowerStrata}, and the cylinder deformation theorem of \cite{WrightCylDef} combined with the degeneration arguments of \cite{AulicinoCompDegKZ, AulicinoCompDegKZAIS}, suffice to exclude them from the principal stratum, cf. Section \ref{Rank2PrinStrSect}.  Alternatively, recent work of \cite{NguyenWright, AulicinoNguyenWright, AulicinoNguyenGen3TwoZeros} have classified rank two manifolds in strata with at most two zeros.  It also follows from this classification and the Forni Geometric Criterion that there are no rank two manifolds in strata with at most two zeros with any zero Lyapunov exponents.  Thus, it suffices to focus on higher dimensional rank one affine manifolds.

The first novel idea in this paper is to use the cylinder deformations of \cite{WrightCylDef} in the case of rank one affine manifolds to make claims about Lyapunov exponents.  The results of \cite{WrightCylDef} allow us to describe deformations of translation surfaces in a rank one affine manifold that is not a Teichm\"uller curve, cf. Section \ref{RELDefSect}.  The results of Section \ref{RELDefSect} were not explicitly stated in \cite{WrightCylDef}, which is why we record them here.  We note that they apply to genus $g$ translation surfaces, and no assumptions are made about Lyapunov exponents.  In fact, the author hopes that these results may have applications in the study of rank one affine manifolds outside of questions about Lyapunov exponents.  For example, they may prove useful for questions of weak mixing for the straight line flow on translation surfaces in rank one affine manifolds, and potentially for classifying higher dimensional non-arithmetic rank one manifolds in low genus following the theme of \cite{NguyenWright, AulicinoNguyenWright, AulicinoNguyenGen3TwoZeros}.

In this paper, we extract information about translation surfaces from the theorem of \cite{AvilaEskinMollerForniBundle} that the Forni subspace is a constant bundle.  This follows a recent theme that has emerged following the work of \cite{EskinMirzakhaniInvariantMeas, EskinMirzakhaniMohammadiOrbitClosures}: transform given properties of an affine manifold into properties about the translation surfaces contained within it.  The greatest breakthrough along these lines was the work of \cite{WrightCylDef} that converts the rank of an affine manifold into statements about deforming cylinders on translation surfaces within that manifold.  This paper converts the assumption that the Forni subspace is constant into a statement that allows us to compare the homology classes of cylinders that are \emph{not} parallel, and uses this to restrict the dimension of the Forni subspace.  The Forni Criterion only concerns parallel cylinders, and this will not suffice for our purposes when the affine manifold has small dimension.  In other words, when the affine manifold has small dimension the deformations are so restricted that it is not at all clear if there exists a translation surface where the Forni Criterion alone yields the desired result.

The finiteness result follows from the equidistribution results of affine manifolds due to \cite{EskinMirzakhaniMohammadiOrbitClosures}.  All of these results use \cite{EskinMirzakhaniInvariantMeas, EskinMirzakhaniMohammadiOrbitClosures} in an essential way so that we know that the orbit closure is an affine manifold.

\

\noindent \textbf{Acknowledgments.}  The author is very grateful to Alex Eskin for suggesting the problem, contributing ideas, and having numerous discussions throughout the development of the work.  The author is also very grateful to Martin M\"oller for pointing out that the results on the Harder-Narasimhan filtration could be applied to this problem.  He would also like to thank Martin M\"oller along with his co-authors Matt Bainbridge and Philipp Habegger, for including Theorem \ref{NoZeroInNonVary} in their paper.  He is grateful to Simion Filip for sharing his work on zero exponents, and for helpful discussions.  He would like to thank Alex Wright for clarifying the ideas in Section 3.  The author is grateful to Matt Bainbridge, Barak Weiss, and Giovanni Forni for many helpful discussions.

\section{Outline of Results}

In this section we recall all of the basic definitions and results needed for this paper.  Then we give the complete proof of the main theorem with the exception of two technical theorems, Theorems \ref{NoRank2PrinStratum} and \ref{Rank1PrinStratum}, whose proofs consist of the bulk of this paper.

\subsection{Preliminaries}

\noindent \textbf{Strata of Abelian Differentials:}  Let $M$ be a translation surface of genus $g \geq 2$.  Equivalently, let $M = (X, \omega)$, where $X$ is a Riemann surface of genus $g$ carrying an Abelian differential $\omega$.  If $\omega$ is holomorphic, then the total order of its zeros is $2g-2$.  Let $\kappa$ denote a partition of $2g-2$.  Consider the set of all pairs $(X,\omega)$ up to equivalence under the mapping class group, where the orders of the zeros of $\omega$ are prescribed by $\kappa$.  This set, denoted $\mathcal{H}(\kappa)$, is called a \emph{stratum of Abelian differentials}.  Let $\Omega\cM_g$ denote the union of all strata of Abelian differentials on surfaces of genus $g$, i.e. $\Omega\cM_g$ is exactly the moduli space of Riemann surfaces of genus $g$ with the bundle of Abelian differentials.

\

\noindent \textbf{\splin ~Action:} The translation surface $M$ can be regarded as a collection of polygons in the plane, where every side has a corresponding parallel side to which it is identified.  Since \splin ~acts on the plane, there is a natural action by \splin ~on $\mathcal{H}(\kappa)$.  Furthermore, the action preserves the area of $M$.  The action by the subgroup of diagonal matrices is called the \emph{Teichm\"uller geodesic flow}.

\

\noindent \textbf{Period Coordinates:} The stratum $\mathcal{H}(\kappa)$ admits natural local charts given by the period coordinate mapping to $\mathbb{C}^n$.  Let $(X,\omega) \in \mathcal{H}(\kappa)$.  Let $\Sigma$ denote the set of zeros of $\omega$.  If we fix a basis $\{\gamma_1, \ldots, \gamma_n\}$ for $H_1(X,\Sigma, \mathbb{Z})$, then we get a local map into relative cohomology
$$\Phi: \begin{array}{ccc}\mathcal{H}(\kappa) & \rightarrow & H^1(X,\Sigma, \mathbb{C}) \cong \mathbb{C}^n \\ (X,\omega) & \mapsto & \left( \int_{\gamma_1} \omega, \ldots, \int_{\gamma_n} \omega \right) \end{array}$$

\

\noindent \textbf{Affine Manifolds:} Period coordinates give local linear coordinates on the stratum.  An \emph{affine \splin -invariant manifold} $\mathcal{M} \subset \mathcal{H}(\kappa)$ is an immersed manifold that is locally linear in period coordinates and \splin -invariant.  It was proven in \cite{EskinMirzakhaniInvariantMeas} and \cite{EskinMirzakhaniMohammadiOrbitClosures}, that the closure of every \splin ~orbit in $\mathcal{H}(\kappa)$ is an affine \splin -invariant submanifold $\mathcal{M}$ that admits a finite \splin -invariant measure $\nu$ after restricting to the unit area surfaces in $\mathcal{M}$, and $\nu$ is affine with respect to period coordinates.\footnote{Actually, $\mathcal{M}$ is an immersed orbifold, but after passing to a finite cover, it is an immersed manifold.}  We will often abbreviate terminology and call $\mathcal{M}$ an affine manifold.

The tangent space of $\mathcal{M}$ can be given in period coordinates as a subspace $T_{\mathbb{C}}(\mathcal{M}) \subset H^1(X,\Sigma,\mathbb{C})$, where the inclusion into first cohomology is seen by considering the period coordinates as the derivative map.  The tangent space satisfies $T_{\mathbb{C}}(\mathcal{M}) = \mathbb{C} \otimes T_{\mathbb{R}}(\mathcal{M})$, where $T_{\mathbb{R}}(\mathcal{M}) \subset H^1(X,\Sigma,\mathbb{R})$.

\

\noindent \textbf{Teichm\"uller Curve:} A \emph{Veech surface} is a translation surface with the property that the derivatives of its affine diffeomorphisms form a lattice subgroup of \splin .  The orbit of a Veech surface under \splin ~is closed, and it is called a \emph{Teichm\"uller curve}.

\

\noindent \textbf{Rank of an Affine Manifold:}   Let $p: H^1(X,\Sigma,\mathbb{R}) \rightarrow H^1(X,\mathbb{R})$ be the natural projection to absolute cohomology.  By \cite{AvilaEskinMollerForniBundle}, the projection of the tangent space of an affine manifold to absolute cohomology is symplectic, whence even dimensional, cf. Theorem \ref{ForniConst}.  In \cite{WrightCylDef}, the (cylinder) rank of an affine invariant manifold $\mathcal{M}$ is defined to be
$$\text{Rank}(\mathcal{M}) = \frac{1}{2}\dim_{\mathbb{R}} p(T_{\mathbb{R}}(\mathcal{M})).$$

\

\noindent \textbf{Field of Affine Definition:} Introduced in \cite{WrightFieldofDef}, the \emph{field of affine definition} $\textbf{k}(\mathcal{M})$ of an affine manifold $\mathcal{M}$ is the smallest subfield of $\mathbb{R}$ such that $\mathcal{M}$ can be defined in local period coordinates by linear equations in $\textbf{k}(\mathcal{M})$.  It was proven that this is well-defined for every affine manifold and has degree at most $g$ over $\mathbb{Q}$, \cite[Thm. 1.1]{WrightFieldofDef}.  In particular, an affine manifold $\mathcal{M}$ is called \emph{arithmetic} when $\textbf{k}(\mathcal{M}) = \mathbb{Q}$.

\

\noindent \textbf{Lyapunov Exponents:} The bundle $H^1_{\mathbb{F}}$ over $\mathcal{H}(\kappa)$ is the bundle with fibers $H^1(X,\mathbb{F})$ and a flat connection (the Gauss-Manin connection) given by identifying nearby lattices $H^1(X,\mathbb{Z})$ and $H^1(X',\mathbb{Z})$.  If $\mathcal{M}$ is an affine manifold, then the Teichm\"uller geodesic flow acts on $\mathcal{M}$ and thus, induces a flow on $H^1_{\mathbb{R}}$.  This flow is known as the \emph{Kontsevich-Zorich cocycle} (KZ-cocycle).

If we consider orbits under the Teichm\"uller geodesic flow that return infinitely many times to a neighborhood of the starting point, then it is possible to compute the monodromy matrix $A(t)$ at each return time $t$.  By computing the logarithms of the eigenvalues of $A(t)A^T(t)$, normalizing them by twice the length of the geodesic at time $t$, and letting $t$ tend to infinity, we get a collection of $2g$ numbers known as the \emph{spectrum of Lyapunov exponents of the KZ-cocycle}, or \emph{KZ-spectrum} for short.  By the Oseledec multiplicative ergodic theorem, these numbers will not depend on the initial starting point for $\nu$-almost every choice of initial data.  Since cohomology admits a symplectic basis that is respected by the monodromy matrix, the KZ-spectrum is symmetric, so the $2g$ Lyapunov exponents of the KZ-cocycle are
$$1 = \lambda_1^{\nu} \geq \cdots \geq \lambda_g^{\nu} \geq -\lambda_g^{\nu} \geq \cdots \geq -\lambda_1^{\nu} = -1.$$
We will suppress the measure from now on and always assume it to be the canonical measure guaranteed by \cite{EskinMirzakhaniInvariantMeas}.  These exponents exist and are defined for every translation surface in almost every direction by \cite{ChaikaEskinLyapExps}.

\

\noindent \textbf{Forni Subspace:} The \emph{Forni subspace} $F(x) \subset H^1(X,\mathbb{R})$ was formally defined in \cite{AvilaEskinMollerForniBundle}.  The subspace $F(x)$ is the maximal \splin -invariant subspace on which the KZ-cocycle acts by isometries with respect to the Hodge inner product.

\begin{theorem}[\cite{AvilaEskinMollerForniBundle} Thm. 1.3, Thm. 2.4]
\label{ForniConst}
For $\nu$-a.a. $x$, the Forni subspace $F(x)$ of the absolute cohomology subspace is symplectic, constant along an orbit closure, and orthogonal to the projection of the tangent space of the affine manifold into absolute cohomology, with respect to both the Hodge and symplectic inner product.

Furthermore, $p(T_{\mathbb{R}}(\mathcal{M}))(x)$ is symplectic.
\end{theorem}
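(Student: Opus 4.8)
The plan is to recover Theorem~\ref{ForniConst} from the variational theory of the Hodge norm along the Teichm\"uller geodesic flow, in the spirit of Forni and Kontsevich. First I would fix, on a translation surface $(X,\omega)$, the Hodge inner product on $H^1(X,\mathbb{R})$: identifying a real class $c$ with the unique holomorphic one-form $\alpha$ whose real part represents it, set $\langle c, c' \rangle = \tfrac{i}{2}\int_X \alpha \wedge \overline{\alpha'}$, and write $\langle c, c'\rangle_s = \int_X c \wedge c'$ for the symplectic (cup-product) pairing. These two forms are intertwined by the complex structure $J$ coming from the Hodge star, so that $\langle c, c'\rangle_s = \langle Jc, c'\rangle$ and $\langle\cdot,\cdot\rangle$ is $J$-compatible. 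The engine of the proof is Forni's formula for the second fundamental form: for a Gauss--Manin flat section $c(t)$ transported along $g_t = \mathrm{diag}(e^t,e^{-t})$, the derivatives of $\|c(t)\|^2$ are governed by a bilinear form $B_t$ (the derivative of the Hodge filtration), yielding $\tfrac{d}{dt}\|c(t)\|^2 = -2\,\mathrm{Re}\,B_t(c,c)$ together with the Forni convexity estimate for the second derivative.

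With these formulas in hand, I would characterize the Forni subspace as the degeneracy locus of $B_t$. Since the KZ-cocycle acts on $F(x)$ by Hodge isometries, the Hodge norm of every flat section of $F$ is stationary in every $\text{SL}_2(\mathbb{R})$-direction, which by the variational formula forces $B_t$ to vanish identically on $F$. This degeneracy is exactly the statement that the Hodge filtration is parallel along $F$, equivalently that $F$ is invariant under $J$. Symplecticity of $F$ is then immediate: for $0 \neq v \in F$ we have $Jv \in F$ and $\langle v, Jv\rangle_s = \langle Jv, Jv\rangle = \|v\|^2 > 0$, so the restriction of the symplectic form to $F$ is non-degenerate. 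The same $J$-invariance shows that the Hodge-orthogonal and symplectic-orthogonal complements of $F$ coincide.

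For orthogonality to $p(T_{\mathbb{R}}(\mathcal{M}))$, I would exploit that the second fundamental form separates the tangent directions from the Forni directions: the tautological plane $\mathrm{span}\{[\mathrm{Re}\,\omega],[\mathrm{Im}\,\omega]\}$ lies in $p(T_{\mathbb{R}}(\mathcal{M}))$ and realizes the extreme exponents $\pm 1$, whereas $B_t$ vanishes on $F$; since the Hodge and symplectic pairings between the two subspaces are precisely what $B_t$ obstructs, $F$ is orthogonal to $p(T_{\mathbb{R}}(\mathcal{M}))$ for both forms, the two notions agreeing by the $J$-invariance of $F$. Symplecticity of $p(T_{\mathbb{R}}(\mathcal{M}))$ then follows by examining its radical $R = p(T_{\mathbb{R}}(\mathcal{M})) \cap p(T_{\mathbb{R}}(\mathcal{M}))^{\perp_s}$: the subspace $R$ is monodromy- and $\text{SL}_2(\mathbb{R})$-invariant and isotropic, a convexity argument via $B_t$ forces the cocycle to act isometrically on $R$ so that $R \subseteq F$, and then orthogonality gives $R \subseteq F^{\perp_s}$, whence $R \subseteq F \cap F^{\perp_s} = 0$ because $F$ is symplectic, so $p(T_{\mathbb{R}}(\mathcal{M}))$ is symplectic.

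The step I expect to be the main obstacle is constancy of $F(x)$ along the orbit closure, since the pointwise description via $B_t$ is insufficient and must be globalized. My plan is to combine the $\text{SL}_2(\mathbb{R})$-invariance of $F$ with recurrence of the Teichm\"uller flow and the semisimplicity of the KZ-cocycle (Oseledec theory together with the fact that the isometric part of the algebraic hull is compact). The vanishing of $B_t$ shows that $F$ is parallel for the Gauss--Manin connection in the flow direction; invoking ergodicity of $\nu$ and an averaging argument over long orbit segments, one upgrades this local flatness to invariance under the full monodromy, so that $F(x)$ descends to a single constant subspace on $\mathcal{M}$. The delicate point, requiring the heaviest ergodic-theoretic input, is to reconcile the measurable Oseledec structure with the continuous variation of the Hodge filtration, that is, to show that the isometric subbundle is genuinely locally constant and not merely measurable.
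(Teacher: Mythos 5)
You should first note a structural point: the paper contains no proof of Theorem \ref{ForniConst} at all --- it is quoted verbatim from \cite{AvilaEskinMollerForniBundle} (their Thm.~1.3 and Thm.~2.4), so there is no internal argument to compare against; the comparison must be with the Avila--Eskin--M\"oller proof itself. Measured against that, your outline does reconstruct the right toolkit for the pointwise statements: the first and second variation of the Hodge norm along the Teichm\"uller flow, the fact that isometric action forces the second fundamental form $B$ to vanish on $F$, and the deduction of symplecticity from star-invariance via $\langle v, Jv\rangle_s = \|v\|^2 > 0$. (Even here, the $J$-invariance of $F$ is asserted rather than derived --- vanishing of $B$ along the flow direction does not immediately give invariance under the full Hodge star, and this step needs its own argument in \cite{AvilaEskinMollerForniBundle}.)

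The two genuine gaps are exactly where the real content lies. First, constancy: your proposed upgrade --- ergodicity plus ``an averaging argument over long orbit segments'' --- cannot bridge the gap between a measurably varying, a.e.-defined, flow-parallel subbundle and a locally constant one; Oseledec theory only ever produces measurable data, and recurrence does not improve its regularity. The mechanism in \cite{AvilaEskinMollerForniBundle} is analytic, not ergodic: the Hodge norm of a flat section of an isometric subbundle restricts to a bounded harmonic-type function on each Teichm\"uller disk and is therefore constant, and the degeneracy locus of $B$ varies real-analytically, which is what forces local constancy. Without this input, ``$F(x)$ descends to a single constant subspace'' is assumed, not proved. Second, orthogonality to $p(T_{\mathbb{R}}(\mathcal{M}))$: your pairing argument via exponents $\pm 1$ versus $0$ only handles the tautological plane $\mathrm{span}\{[\mathrm{Re}\,\omega],[\mathrm{Im}\,\omega]\}$. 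For the rest of $p(T_{\mathbb{R}}(\mathcal{M}))$ you cannot assume nonzero exponents --- that is precisely \cite{FilipZeroExps}, later work which the paper invokes separately in Proposition \ref{Gen3ZeroImpForni}, so using it here would be circular --- and the sentence ``the pairings between the two subspaces are precisely what $B_t$ obstructs'' is a slogan, not an argument. The efficient correct route runs through constancy first: once $F$ is locally constant, for a flat $v \in F$ the function $x \mapsto \langle v, [\mathrm{Re}\,\omega(x)]\rangle_s$ is linear in period coordinates on $\mathcal{M}$ and vanishes identically by the tautological-plane argument, and differentiating it along $\mathcal{M}$ yields $\langle v, c\rangle_s = 0$ for every $c \in p(T_{\mathbb{R}}(\mathcal{M}))$. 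Your closing radical argument for symplecticity of $p(T_{\mathbb{R}}(\mathcal{M}))$ has the right shape ($R \subseteq F$ and $R \subseteq F^{\perp_s}$ force $R = 0$), but it leans on the claim that an $\text{SL}_2(\mathbb{R})$-invariant isotropic subbundle is isometric, which is itself a nontrivial lemma of the Avila--Eskin--M\"oller machinery and cannot be dispatched by the word ``convexity'' alone.
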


\noindent \textbf{Determinant Locus:}  Let $\{\theta_1, \ldots, \theta_g\}$ be a basis of Abelian differentials on $X$.  Define the $ij$-component of the \emph{derivative of the period matrix} at $X$ in direction of the tangent vector $\mu_{\omega}$ by the Ahlfors-Rauch variational formula
$$\left(\frac{d\Pi}{\mu_{\omega}}\right)_{ij} = \int_X \theta_i\theta_j\frac{\overline{\omega}}{\omega}.$$
Here $\mu_{\omega}$ is the Beltrami differential dual to $\omega$ and given by the formula $\mu_{\omega} = \overline{\omega}/\omega$.  The \emph{determinant locus}, introduced in \cite{ForniDev}, is the set
$$\mathcal{D}_g = \{(X,\omega) | \text{det}(d\Pi/\mu_{\omega}) = 0\}.$$

We record the following trivial lemma that follows from the definition of infimum.

\begin{lemma}
\label{ForniDimSubMan}
Let $\mathcal{M}'$ and $\mathcal{M}$ be affine manifolds.  If $\mathcal{M}' \subset \mathcal{M}$, then
$$\inf_{x' \in \mathcal{M}'} \dim F(x') \geq \inf_{x \in \mathcal{M}} \dim F(x).$$
\end{lemma}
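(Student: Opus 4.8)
The plan is to reduce the statement to the elementary monotonicity of the infimum under restriction to a subset: for any real-valued function $f$ defined on a set $B$ and any subset $A \subseteq B$, one has $\inf_{a \in A} f(a) \geq \inf_{b \in B} f(b)$, since every value attained on $A$ is also attained on $B$ and is therefore bounded below by $\inf_{b \in B} f(b)$. The entire content of the lemma is to recognize that the two infima in the statement are infima of the \emph{same} function over nested index sets.

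Before invoking this principle, I would first check that the quantity $\dim F(x')$ appearing on the left is literally the same function of $x'$ as the one appearing on the right. The Forni subspace is defined intrinsically as the maximal $\mathrm{SL}_2(\mathbb{R})$-invariant subspace of $H^1(X,\mathbb{R})$ on which the KZ-cocycle acts by Hodge isometries; by Theorem \ref{ForniConst} it is determined by, and constant along, the orbit closure of the point. Hence for $x' \in \mathcal{M}'$ the subspace $F(x')$ depends only on $\overline{\mathrm{SL}_2(\mathbb{R})\, x'}$, which is contained in $\mathcal{M}' \subseteq \mathcal{M}$. In particular $\dim F(x')$ does not depend on whether $x'$ is regarded as a point of $\mathcal{M}'$ or of $\mathcal{M}$, so there is a single well-defined function $x \mapsto \dim F(x)$ restricting consistently to both manifolds.

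With this identification in hand, the inclusion $\mathcal{M}' \subseteq \mathcal{M}$ shows that the left-hand infimum ranges over a subset of the points indexing the right-hand infimum. For every $x' \in \mathcal{M}'$ we have $x' \in \mathcal{M}$, and therefore $\dim F(x') \geq \inf_{x \in \mathcal{M}} \dim F(x)$; taking the infimum over all $x' \in \mathcal{M}'$ of the left-hand side preserves this lower bound and yields the desired inequality.

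The only point requiring any care — and the reason the lemma is genuinely trivial rather than automatic — is the well-definedness of $F(x')$ independently of the ambient manifold, which I would expect to be the sole potential obstacle. Once the intrinsic nature of the Forni subspace from Theorem \ref{ForniConst} is granted, the conclusion is pure set theory and draws on no further geometric input about translation surfaces.
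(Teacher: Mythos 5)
Your proof is correct and matches the paper's treatment: the paper records this lemma as an immediate consequence of the definition of the infimum over nested sets, exactly the monotonicity principle you invoke. Your added check that $F(x')$ is intrinsic to the point (via its definition and Theorem \ref{ForniConst}) and so independent of the ambient manifold is a reasonable piece of care that the paper leaves implicit.
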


\subsection{Genus $3$ Surfaces with Non-trivial Forni Subspace}

In this section we prove the main result, cf. Theorem \ref{MainThmForniSp}, for orbit closures with non-trivial Forni subspace.  With the exception of Theorems \ref{NoRank2PrinStratum} and \ref{Rank1PrinStratum}, the proof of Theorem \ref{MainThmForniSp} is entirely contained in this section.  We begin by proving Proposition \ref{Gen3ZeroImpForni}, which allows us to restrict our attention to the Forni subspace.

\begin{proof}[Proof of Proposition \ref{Gen3ZeroImpForni}]
Since the spectrum of Lyapunov exponents is symmetric and the top Lyapunov exponent is equal to $1$, there are either two or four Lyapunov exponents equal to zero.  This proposition was proven in \cite{AulicinoCompDegKZ, AulicinoCompDegKZAIS} for the case of four zero Lyapunov exponents, so it suffices to assume that there are exactly two zero Lyapunov exponents.  By \cite[Cor. 1.3(i)]{FilipZeroExps}, the Lyapunov exponents in the tangent bundle projected to absolute cohomology are all non-zero.  This implies that the rank of the affine manifold must be either one or two.


If the rank of the affine manifold is two, then all of the zero exponents must lie in the complement of the absolute tangent bundle, which must be a Forni subspace by the Forni-Kontsevich formula for the sum of the Lyapunov exponents \cite{ForniDev}.

Finally, if the rank of the affine manifold is one, then \cite{FilipZeroExps} implies that the zero exponents come from a Forni subspace, or from the algebraic hull having an $\text{SU}(p,q)$ factor.  If the algebraic hull has an $\text{SU}(p,q)$ factor, then after complexifying, the algebraic hull must also have an $\text{SU}(q,p)$ factor.  However, the absolute tangent space is a $2$-dimensional flat bundle in the absolute cohomology bundle.  Therefore, an $\text{SL}(2)$ factor splits off of the algebraic hull.  This implies that $p+q = 2$, which obviously means it is impossible to have $p-q=1$ as required to produce a zero Lyapunov exponent without the presence of a Forni subspace.
\end{proof}

By \cite{AvilaEskinMollerForniBundle}, the Forni subspace is orthogonal to the bundle $p(T(\mathcal{M}))$.  This implies that $\mathcal{M}$ must be an affine manifold of rank one or two.  The first theorem for addressing these cases is the result \cite[Prop. 4.5]{BainbridgeHabeggerMollerHNFilt}, which applies to Teichm\"uller curves.  We restate their theorem here in the language of this paper.

\begin{theorem}[\cite{BainbridgeHabeggerMollerHNFilt}]
\label{NoZeroInNonVary}
There are no Teichm\"uller curves in genus three with non-trivial Forni subspace in any stratum other than possibly the principal stratum.
\end{theorem}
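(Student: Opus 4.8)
The plan is to translate the existence of a non-trivial Forni subspace into a statement about the degrees of subbundles of the Hodge bundle over the Teichm\"uller curve, and then to contradict that statement using the Harder-Narasimhan filtration. Let $C$ be the Teichm\"uller curve and, after passing to a finite unramified cover and taking the smooth compactification $\overline{C}$, let $f : \mathcal{X} \to \overline{C}$ be the associated family of curves with Hodge bundle $E = f_* \omega_{\mathcal{X}/\overline{C}}$ of rank $g = 3$. By Theorem \ref{ForniConst} the Forni subspace $F$ is a flat symplectic subbundle of the absolute cohomology bundle on which the Kontsevich-Zorich cocycle acts by Hodge isometries; its complexification therefore underlies a unitary (flat, trivial Higgs field) sub-variation of Hodge structure, whose $(1,0)$-part $F^{1,0} = F \cap H^{1,0}$ is a flat holomorphic subbundle of $E$ of degree zero and rank $\tfrac{1}{2}\dim F \geq 1$. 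So the hypothesis of a non-trivial Forni subspace is equivalent to the presence of a degree-zero flat subbundle of the Hodge bundle.

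First I would set up the Harder-Narasimhan filtration $0 = E_0 \subset E_1 \subset \cdots \subset E_k = E$. The line subbundle $\mathcal{L} \subset E$ spanned by the generating Abelian differential is maximal Higgs, hence is the maximal destabilizing subbundle $E_1$ and carries the strictly largest slope, accounting for $\lambda_1 = 1$. Since all the graded slopes are nonnegative (the Lyapunov exponents are nonnegative), a flat subbundle of slope $0$ must inject into the minimal-slope graded piece; thus the existence of $F^{1,0}$ forces the smallest slope of the filtration to equal zero, equivalently $\lambda_g = \lambda_3 = 0$. The crucial input is then the Weierstrass filtration of \cite{YuZuoWeierstrassFiltLyapExps}, which refines the Harder-Narasimhan filtration and bounds the degrees of its graded pieces from below in terms of the vanishing orders of holomorphic forms at the zeros of $\omega$, that is, in terms of the partition $\kappa$. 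Combined with the degree formula for the sum of the exponents on a Teichm\"uller curve \cite{ForniDev}, a slope-zero graded piece imposes a sharp numerical constraint on $\kappa$.

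Finally I would run through the four non-principal genus-three strata $\mathcal{H}(4)$, $\mathcal{H}(3,1)$, $\mathcal{H}(2,2)$, and $\mathcal{H}(2,1,1)$, and verify that in each case the Yu-Zuo lower bound on the minimal graded slope is strictly positive, so that no degree-zero piece, and hence no Forni subspace, can occur; only $\mathcal{H}(1,1,1,1)$ escapes the inequality, which is exactly the stated conclusion. I expect the main obstacle to be two-fold: first, pinning down precisely where the flat subbundle $F^{1,0}$ sits inside the filtration so that one reads off the correct graded degree; and second, treating $\mathcal{H}(2,2)$, whose hyperelliptic and odd-spin components carry extra symmetry and plausibly require the finer stratum-by-stratum analysis of \cite{BainbridgeHabeggerMollerHNFilt} rather than the slope bounds in isolation. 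It is precisely this delicate estimate that is carried out in \cite[Prop. 4.5]{BainbridgeHabeggerMollerHNFilt}, which I would invoke to complete the argument.
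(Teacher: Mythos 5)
The paper offers no proof of this statement: it is imported verbatim as \cite[Prop. 4.5]{BainbridgeHabeggerMollerHNFilt}, and the paper's introduction attributes to it exactly the mechanism you describe --- the Weierstrass/Harder-Narasimhan filtration bounds of \cite{YuZuoWeierstrassFiltLyapExps} as exploited in \cite{BainbridgeHabeggerMollerHNFilt} --- so your outline faithfully reconstructs the cited argument (a non-trivial Forni subspace yields a flat unitary sub-variation whose $(1,0)$-part is a degree-zero subbundle of the Hodge bundle, which the strictly positive slope bounds in the four non-principal genus-three strata forbid). Note only that since your final step invokes \cite[Prop. 4.5]{BainbridgeHabeggerMollerHNFilt} itself, your proposal is not a self-contained proof but coincides with the paper's own treatment, namely a citation accompanied by the correct sketch of why the slope estimates single out $\mathcal{H}(1,1,1,1)$.
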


Since all rank one affine manifolds in $\cH(4)$ are Teichm\"uller curves, we record the following trivial corollary.

\begin{corollary}
\label{NoZeroTeichCurvesInH4}
There are no rank one affine manifolds in $\cH(4)$ with non-trivial Forni subspace.
\end{corollary}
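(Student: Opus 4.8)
The plan is to reduce the corollary directly to Theorem \ref{NoZeroInNonVary} by showing that every rank one affine manifold in $\cH(4)$ is forced to be a Teichm\"uller curve. First I would note that in genus three the principal stratum is $\cH(1,1,1,1)$, so $\cH(4)$ is one of the strata other than the principal stratum to which Theorem \ref{NoZeroInNonVary} applies.

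Next I would establish the structural fact that in $\cH(4)$ the notions ``rank one affine manifold'' and ``Teichm\"uller curve'' coincide. The point is that a differential in $\cH(4)$ has a single zero, so the set of zeros $\Sigma$ consists of one point and the long exact sequence of the pair identifies relative cohomology $H^1(X,\Sigma,\mathbb{R})$ with absolute cohomology $H^1(X,\mathbb{R})$. Consequently the projection $p$ is an isomorphism, and for any affine manifold $\mathcal{M} \subset \cH(4)$ we have $\dim_{\mathbb{R}} p(T_{\mathbb{R}}(\mathcal{M})) = \dim_{\mathbb{R}} T_{\mathbb{R}}(\mathcal{M})$. If $\text{Rank}(\mathcal{M}) = 1$, then by definition $\dim_{\mathbb{R}} p(T_{\mathbb{R}}(\mathcal{M})) = 2$, so $\dim_{\mathbb{R}} T_{\mathbb{R}}(\mathcal{M}) = 2$ and hence $\dim_{\mathbb{C}} T_{\mathbb{C}}(\mathcal{M}) = 2$, where I use $T_{\mathbb{C}}(\mathcal{M}) = \mathbb{C}\otimes T_{\mathbb{R}}(\mathcal{M})$. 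An affine manifold of complex dimension two is a closed \splin -orbit, i.e.\ a Teichm\"uller curve, since two is the minimal possible dimension and is realized precisely by such orbits.

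Finally, combining the two observations closes the argument: every rank one affine manifold $\mathcal{M} \subset \cH(4)$ is a Teichm\"uller curve lying in a non-principal genus three stratum, and by Theorem \ref{NoZeroInNonVary} no such Teichm\"uller curve admits a non-trivial Forni subspace. Therefore there are no rank one affine manifolds in $\cH(4)$ with non-trivial Forni subspace, as claimed.

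The only step requiring any thought — and the reason this is genuinely a corollary rather than a verbatim restatement — is the dimension count identifying rank one manifolds in $\cH(4)$ with Teichm\"uller curves, which rests entirely on $\cH(4)$ having a single zero; once that identification is in place the conclusion is immediate from Theorem \ref{NoZeroInNonVary}, so I anticipate no real obstacle.
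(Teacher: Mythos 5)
Your proposal is correct and follows the same route as the paper, which disposes of the corollary in one line (``Since all rank one affine manifolds in $\cH(4)$ are Teichm\"uller curves\dots'') and then invokes Theorem \ref{NoZeroInNonVary}; you have merely made explicit the dimension count the paper leaves implicit, namely that a single zero makes $p: H^1(X,\Sigma,\mathbb{R}) \rightarrow H^1(X,\mathbb{R})$ an isomorphism, so rank one forces $\dim_{\mathbb{C}} T_{\mathbb{C}}(\mathcal{M}) = 2$ and hence a closed orbit.
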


\begin{lemma}
\label{Gen3Rank2ImpQ}
All rank two affine manifolds in genus three are arithmetic.
\end{lemma}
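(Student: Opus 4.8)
The plan is to bound the degree of the field of affine definition $\mathbf{k}(\cM)$ using the interaction between rank and the symplectic absolute cohomology bundle. By Theorem \ref{ForniConst}, the projection $p(T_{\bR}(\cM))$ is symplectic, so for a rank two manifold in genus three it is a $4$-dimensional symplectic subspace of the $6$-dimensional $H^1(X,\bR)$, and the Forni subspace $F(x)$ is its $2$-dimensional symplectic orthogonal complement. First I would recall from \cite{WrightFieldofDef} that $[\mathbf{k}(\cM):\bQ] \leq g = 3$, so a priori the degree could be $1$, $2$, or $3$; the goal is to force it to be $1$.

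The key step is to combine the rank with the structure theorem for the field of affine definition. In \cite{WrightCylDef,WrightFieldofDef} it is shown that the cylinder rank controls how the absolute cohomology bundle decomposes over the Galois conjugates of $\mathbf{k}(\cM)$: the bundle $p(T(\cM))$ together with its Galois conjugate pieces must all embed symplectically and disjointly inside $H^1(X,\bR)$. Concretely, each of the $d = [\mathbf{k}(\cM):\bQ]$ Galois-conjugate summands contributes a symplectic subspace of dimension $2\,\text{Rank}(\cM) = 4$ to the $6$-dimensional space $H^1(X,\bR)$. Since these summands are symplectically orthogonal, their total dimension $4d$ cannot exceed $6$, which already forces $d = 1$. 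This is the heart of the argument: a rank two manifold simply occupies too much of the absolute cohomology in genus three to leave room for any nontrivial Galois conjugate.

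I expect the main obstacle to be stating precisely which decomposition result is being invoked and verifying that the conjugate summands are genuinely symplectically orthogonal of the full dimension $2\,\text{Rank}(\cM)$, rather than merely disjoint. One must be careful that the relevant statement concerns the \emph{absolute} cohomology bundle $p(T(\cM))$ and not the relative tangent space $T(\cM)$ itself, since the degree bound and the splitting into conjugate local systems are phrased in terms of the symplectic absolute cohomology. Assuming the cited results give the symplectic orthogonality of the $d$ conjugate pieces, the dimension count $4d \leq 6$ is immediate and completes the proof that $d=1$, i.e. $\cM$ is arithmetic.

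Alternatively, if the clean symplectic-orthogonality statement is not directly available in the form needed, I would fall back on the weaker fact that the conjugate summands are linearly independent in $H^1(X,\bR)$; since each has real dimension $4$, two independent copies would already need dimension $8 > 6$, yielding the same contradiction and the same conclusion $[\mathbf{k}(\cM):\bQ] = 1$.
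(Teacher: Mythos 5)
Your proposal is correct and is essentially the paper's proof: the paper simply quotes the inequality of \cite[Thm.~1.5]{WrightFieldofDef} that $\text{Rank}(\cM)\cdot[\textbf{k}(\cM):\bQ]\leq g=3$, and the mechanism behind that inequality is exactly your dimension count on the Galois-conjugate subbundles of $p(T(\cM))$ inside $H^1(X,\bR)$ (your fallback version, mere directness of the sum giving $4d\leq 6$, is all that is needed), forcing $d=1$. One stray inaccuracy worth noting, though it plays no role in your argument: the symplectic orthogonal complement of $p(T_{\bR}(\cM))$ merely \emph{contains} the Forni subspace and need not equal it, so your opening identification of $F(x)$ with that complement should be dropped.
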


\begin{proof}
By the inequality in \cite[Thm. 1.5]{WrightFieldofDef}, the product of the rank of the affine manifold and the degree of the field of definition is bounded by three.  Therefore, the degree of the field of definition must be one.
\end{proof}

The following lemma is well-known, but a formal proof can be found in \cite{AulicinoCompDegKZAIS}.

\begin{lemma}\cite{AulicinoCompDegKZAIS}
\label{QImpliesTeichCurves}
Any affine manifold with rational field of definition must contain infinitely many arithmetic Teichm\"uller curves.
\end{lemma}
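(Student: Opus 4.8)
The plan is to realize the infinitely many arithmetic Teichm\"uller curves as the $\mathrm{SL}_2(\mathbb{R})$-orbit closures of square-tiled surfaces lying inside $\mathcal{M}$, and to produce these square-tiled surfaces in abundance using the rationality of the defining equations of $\mathcal{M}$. First I would recall the characterization of square-tiled surfaces: a translation surface is square-tiled precisely when, after rescaling, all of its relative period coordinates $\int_{\gamma_j}\omega$ lie in $\mathbb{Q} + i\mathbb{Q}$; equivalently it is a branched cover of the standard torus $\mathbb{C}/(\mathbb{Z}\oplus i\mathbb{Z})$. Every such surface is Veech with Veech group commensurable to $\mathrm{SL}_2(\mathbb{Z})$, so its $\mathrm{SL}_2(\mathbb{R})$-orbit is closed and is an arithmetic Teichm\"uller curve. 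Since $\mathcal{M}$ is closed and $\mathrm{SL}_2(\mathbb{R})$-invariant, any square-tiled surface contained in $\mathcal{M}$ generates an arithmetic Teichm\"uller curve entirely contained in $\mathcal{M}$. Thus it suffices to exhibit infinitely many square-tiled surfaces in $\mathcal{M}$ lying on distinct such curves.

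Next I would exploit the hypothesis $\mathbf{k}(\mathcal{M}) = \mathbb{Q}$. By definition this means the tangent space is cut out in period coordinates by linear equations with rational coefficients, so that $T_{\mathbb{C}}(\mathcal{M}) = T_{\mathbb{Q}}(\mathcal{M}) \otimes_{\mathbb{Q}} \mathbb{C}$ for a $\mathbb{Q}$-rational subspace $T_{\mathbb{Q}}(\mathcal{M}) \subset H^1(X,\Sigma,\mathbb{Q})$. Consequently, once $\mathcal{M}$ contains a single square-tiled surface $p_0$, the period chart centered at $p_0$ identifies a neighborhood with an open subset of $T_{\mathbb{C}}(\mathcal{M})$, and the Gaussian-rational lattice $T_{\mathbb{Q}}(\mathcal{M}) \otimes_{\mathbb{Q}} \mathbb{Q}(i)$ is dense in $T_{\mathbb{C}}(\mathcal{M})$. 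Every point of this lattice has all period coordinates in $\mathbb{Q} + i\mathbb{Q}$ and hence is square-tiled, so square-tiled surfaces are dense in $\mathcal{M}$.

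The main obstacle is producing the initial square-tiled surface $p_0$, since a generic point of $\mathcal{M}$ has irrational periods and the basepoint of a period chart need not itself be rational; this is precisely where the arithmeticity must be invoked rather than merely the rational structure of the tangent directions. Here I would use the known equivalence in the theory of the field of definition that an affine manifold with $\mathbf{k}(\mathcal{M}) = \mathbb{Q}$ necessarily contains a square-tiled surface, which one may obtain by combining the $\mathrm{SL}_2(\mathbb{R})$-action with the rationality of the defining equations to rationalize all periods simultaneously. Granting this, density follows as above.

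Finally, to upgrade density to infinitude of \emph{distinct} curves, I would argue by dimension. Assuming $\dim_{\mathbb{C}}\mathcal{M} > 2$ (otherwise $\mathcal{M}$ is itself a Teichm\"uller curve and the conclusion is degenerate), any arithmetic Teichm\"uller curve contained in $\mathcal{M}$ has complex dimension $2$, so a finite union of such curves is a closed subset with empty interior in $\mathcal{M}$. Such a union cannot contain the dense set of square-tiled surfaces, and since each square-tiled surface lies on exactly one arithmetic Teichm\"uller curve, this forces infinitely many distinct arithmetic Teichm\"uller curves inside $\mathcal{M}$.
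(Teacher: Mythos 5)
Your overall route is the standard one behind this lemma: rational points in period coordinates are square-tiled, every square-tiled surface in $\mathcal{M}$ generates an arithmetic Teichm\"uller curve contained in $\mathcal{M}$ (since $\mathcal{M}$ is closed and invariant), and a dense family of such surfaces cannot be covered by finitely many closed curves of complex dimension two. Note that the paper itself gives no proof, deferring to the cited reference, whose argument is exactly this density argument. However, as written your proposal has a genuine gap at precisely the step you flag as the ``main obstacle'': you invoke, without proof, a ``known equivalence'' that $\mathbf{k}(\mathcal{M})=\mathbb{Q}$ forces $\mathcal{M}$ to contain at least one square-tiled surface. That assertion carries essentially the whole content of the lemma, so citing it as known is circular at the crucial point; and the suggested mechanism --- ``combining the $\mathrm{SL}_2(\mathbb{R})$-action with the rationality of the defining equations to rationalize all periods simultaneously'' --- is not an argument, since the group has only four real parameters and applies the \emph{same} matrix to all $2n$ real period coordinates, so it cannot in general rationalize them simultaneously.

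The gap is real but easily filled, and the fix shows your worry about the basepoint is illusory. By the very setup of the field of affine definition (Wright), $\mathcal{M}$ is locally cut out in period coordinates by linear equations with coefficients in $\mathbf{k}(\mathcal{M})$ and \emph{zero constant term}; homogeneity follows from invariance under the $\mathbb{R}_{>0}$-scaling inside $\mathrm{GL}_2^{+}(\mathbb{R})$, which scales the period vector of a point while preserving $\mathcal{M}$. Hence near \emph{any} point of $\mathcal{M}$, rational or not, the local model is an open subset of a linear subspace $L \subset H^1(X,\Sigma,\mathbb{C})$ satisfying $L = L_{\mathbb{Q}} \otimes_{\mathbb{Q}} \mathbb{C}$ with $L_{\mathbb{Q}} = L \cap H^1(X,\Sigma,\mathbb{Q})$. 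Choosing a $\mathbb{Q}$-basis of $L_{\mathbb{Q}}$, the points with coefficients in $\mathbb{Q}(i)$ are dense in $L$ and have all periods in $\mathbb{Q}+i\mathbb{Q}$, hence are square-tiled by the Gutkin--Judge characterization you quote. So square-tiled surfaces are dense in every period chart of $\mathcal{M}$, with no initial surface $p_0$ needed, and your concluding count --- a finite union of closed two-complex-dimensional curves has empty interior and cannot contain a dense set --- goes through verbatim. Your exclusion of the degenerate case $\dim_{\mathbb{C}}\mathcal{M} = 2$ is consistent with how the paper applies the lemma, namely only to rank two affine manifolds, which have strictly larger dimension.
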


\begin{theorem}
\label{NoAISLowerStrata}
There are no rank two affine manifolds in genus three with non-trivial Forni subspace outside of the principal stratum.
\end{theorem}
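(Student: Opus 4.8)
The plan is to argue by contradiction, leveraging arithmeticity to reduce to the Teichm\"uller curve case already handled by Theorem \ref{NoZeroInNonVary}. Suppose $\mathcal{M}$ were a rank two affine manifold in genus three, contained in a stratum $\mathcal{H}(\kappa)$ other than the principal one, and with non-trivial Forni subspace; by Theorem \ref{ForniConst} this means that the $\nu$-a.e.\ constant, hence minimal, value $\inf_{x \in \mathcal{M}} \dim F(x)$ is strictly positive.

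First I would invoke Lemma \ref{Gen3Rank2ImpQ} to conclude that $\mathcal{M}$ is arithmetic, i.e.\ $\textbf{k}(\mathcal{M}) = \mathbb{Q}$. Then Lemma \ref{QImpliesTeichCurves} produces infinitely many, hence in particular at least one, arithmetic Teichm\"uller curve $\mathcal{M}' \subset \mathcal{M}$. Since an affine manifold lies in a single stratum and $\mathcal{M}' \subset \mathcal{M} \subset \mathcal{H}(\kappa)$, the curve $\mathcal{M}'$ is a Teichm\"uller curve in the \emph{same} non-principal stratum $\mathcal{H}(\kappa)$. Applying Theorem \ref{NoZeroInNonVary}, a Teichm\"uller curve in a stratum other than the principal one has trivial Forni subspace, so $\inf_{x' \in \mathcal{M}'} \dim F(x') = 0$. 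Finally, the inclusion $\mathcal{M}' \subset \mathcal{M}$ together with Lemma \ref{ForniDimSubMan} yields $\inf_{x \in \mathcal{M}} \dim F(x) \leq \inf_{x' \in \mathcal{M}'} \dim F(x') = 0$, forcing the generic Forni subspace of $\mathcal{M}$ to be trivial and contradicting the hypothesis.

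The argument is short precisely because the substantive inputs are quoted: the bound on the product of the rank and the degree of the field of definition from \cite{WrightFieldofDef} behind Lemma \ref{Gen3Rank2ImpQ}, the existence of arithmetic Teichm\"uller curves inside an arithmetic manifold (Lemma \ref{QImpliesTeichCurves}), and the Harder--Narasimhan exclusion of Theorem \ref{NoZeroInNonVary}. The only point requiring care is the direction of monotonicity in Lemma \ref{ForniDimSubMan}: passing to the smaller affine manifold $\mathcal{M}'$ can only enlarge the Forni subspace, so the vanishing of the Forni subspace on the subvariety $\mathcal{M}'$ genuinely propagates upward to force vanishing on $\mathcal{M}$. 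This monotonicity is what converts the known statement about curves into the desired statement about the higher-dimensional manifold, and it is the step one must be sure to orient correctly.
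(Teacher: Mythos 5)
Your proposal is correct and follows essentially the same route as the paper: arithmeticity via Lemma \ref{Gen3Rank2ImpQ}, a Teichm\"uller curve inside $\mathcal{M}$ (in the same non-principal stratum) via Lemma \ref{QImpliesTeichCurves}, the monotonicity of Lemma \ref{ForniDimSubMan}, and the contradiction with Theorem \ref{NoZeroInNonVary}. You merely phrase the final step contrapositively (vanishing on the curve propagating upward rather than non-triviality propagating downward), and you use the inequality in the correct orientation, so there is no gap.
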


\begin{proof}
Since every rank two affine manifold with non-trivial Forni subspace must contain a Teichm\"uller curve by Lemma \ref{QImpliesTeichCurves}, and the Teichm\"uller curve must also have non-trivial Forni subspace by Lemma \ref{ForniDimSubMan}, there can be no such rank two affine manifolds by Theorem \ref{NoZeroInNonVary}.
\end{proof}

The following theorem is proven at the end of Section \ref{Rank2PrinStrSect} using all of the results in that section.

\begin{theorem}
\label{NoRank2PrinStratum}
There are no rank two affine manifolds with non-trivial Forni subspace in $\mathcal{H}(1,1,1,1)$.
\end{theorem}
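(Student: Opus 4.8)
The plan is to combine the rigidity coming from the rank with the arithmeticity of $\cM$ and Wright's cylinder deformation theorem, and then run a degeneration argument that pushes $\cM$ to the boundary of the principal stratum where Theorem \ref{NoAISLowerStrata} applies. First I would pin down the Forni subspace by a dimension count. In genus three $\dim_{\bR} H^1(X,\bR) = 6$, and since $\cM$ has rank two, Theorem \ref{ForniConst} gives that $p(T_{\bR}(\cM))$ is symplectic of real dimension four, so its symplectic orthogonal complement is two-dimensional and symplectic. Because the Forni subspace $F$ is symplectic and symplectically orthogonal to $p(T_{\bR}(\cM))$, a non-trivial $F$ must coincide with this complement; hence $\dim_{\bR} F = 2$ and $F = p(T_{\bR}(\cM))^{\perp}$. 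By Lemma \ref{Gen3Rank2ImpQ} the manifold $\cM$ is arithmetic, so square-tiled surfaces are dense in $\cM$ and completely periodic directions are abundant. Note that the classification of rank two manifolds in strata with at most two zeros does not reach $\cH(1,1,1,1)$, which has four zeros, so a degeneration approach seems genuinely necessary.

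Next I would extract the cylinder data. Fix a horizontally periodic surface $M \in \cM$ and decompose its horizontal cylinders into $\cM$-equivalence classes in the sense of \cite{WrightCylDef}. Wright's cylinder deformation theorem shows that shearing and dilating each equivalence class is tangent to $\cM$, so the Poincar\'e duals of the core curves lie in $p(T_{\bR}(\cM))$. Since $F = p(T_{\bR}(\cM))^{\perp}$, every cylinder core curve is symplectically orthogonal to $F$; identifying $H_1$ with $H^1$, the core curves in each periodic direction therefore span an isotropic subspace contained in the four-dimensional symplectic space $F^{\perp}$, hence of dimension at most $g - \tfrac{1}{2}\dim F = 2$. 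This is the quantitative refinement of the Forni Criterion \cite{ForniCriterion} that drives the argument. The new ingredient I would exploit is the constancy of $F$ along $\cM$ from Theorem \ref{ForniConst}: since $F$ is fixed under the Gauss--Manin connection, this orthogonality relation persists under every cylinder deformation and, crucially, survives degeneration.

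The heart of the proof is then a degeneration. Collapsing an $\cM$-equivalence class of horizontal cylinders drives $M$ to the boundary of $\cH(1,1,1,1)$; applying the compact-degeneration analysis of \cite{AulicinoCompDegKZ, AulicinoCompDegKZAIS} I would track the limit of the constant Forni subspace and show that the boundary orbit closure $\cM'$ inherits a non-trivial Forni subspace. When the collapse merely collides zeros, $\cM'$ is a genus-three affine manifold in a strictly smaller stratum, and when the collapse can be arranged to preserve the rank, $\cM'$ is a rank two manifold with non-trivial Forni subspace outside the principal stratum, directly contradicting Theorem \ref{NoAISLowerStrata}. Any degeneration that instead lowers the genus or the rank would be handled separately, using that the limiting surface still carries a Forni subspace of dimension at least $\dim F$ by Lemma \ref{ForniDimSubMan}, so it again lands in a configuration already excluded.

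I expect the main obstacle to be controlling this degeneration precisely. One must guarantee that some admissible cylinder collapse lands in a strictly lower genus-three stratum while simultaneously preserving both the rank-two property and the non-triviality of $F$, and one must rule out the degenerate limits in which the entire Forni subspace is destroyed. This forces a careful enumeration of the cylinder configurations available on completely periodic surfaces in a rank two subvariety of $\cH(1,1,1,1)$, together with the explicit description from \cite{AulicinoCompDegKZ, AulicinoCompDegKZAIS} of how the Hodge-isometric Forni subspace behaves as each configuration of saddle connections is pinched.
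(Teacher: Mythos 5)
Your strategy is the paper's strategy: enumerate the cylinder configurations compatible with a non-trivial Forni subspace, collapse a cylinder to degenerate within genus three to the lower stratum $\cH(2,1,1)$, transport the Forni subspace to the boundary (this is exactly Lemma \ref{DetLocusClosed}), and contradict the lower-strata exclusions. Your dimension count ($\dim F = 2$, $F = p(T_{\bR}(\cM))^{\perp}$, core curves spanning an isotropic subspace of dimension at most $2$) is correct and matches Lemma \ref{Gen3TopConfigs}, though note that $F$ annihilating each \emph{individual} core curve is not a formal consequence of Wright's deformation theorem (which only puts the weighted sum over an equivalence class in the tangent space); it is Lemma \ref{ForniVanCyls}, proved by a Hodge-norm degeneration argument.

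However, as a proof there are two genuine gaps. First, everything you defer as ``the main obstacle'' \emph{is} the proof: one must show, configuration by configuration, that there is a \emph{free simple} cylinder with \emph{distinct} simple zeros on its two boundaries, so that the collapse contracts exactly one saddle connection between distinct zeros and lands in $\cH(2,1,1)$ without pinching any curve. This occupies Lemmas \ref{SimpCylImpDistZeros}, \ref{H113CylsAllFree}, and \ref{Rank2Config4} through \ref{Rank2Config1}, and it requires non-obvious inputs (the homological relation $\gamma_{i_1} + \gamma_{i_2} = \gamma_{i_3}$ forcing freeness, \cite[Prop.~3.2, 3.3]{NguyenWright}, \cite[Cor.~6]{SmillieWeissMinSets}, and an induction on the maximal number of cylinders that excludes Configuration 4) first). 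Collapsing a general $\cM$-equivalence class, as you propose, can pinch core curves and drop the genus, and a genus-dropping limit cannot be ``handled separately'' with Lemma \ref{DetLocusClosed}, which requires the boundary to stay in genus three; freeness of a singleton class is what rules this out. Second, your endgame mis-identifies the contradiction: Theorem \ref{NoAISLowerStrata} only excludes \emph{rank two} manifolds outside the principal stratum, and nothing guarantees the boundary orbit closure $\cM'$ has rank two --- collapsing can change the rank, and rank preservation (which you try to arrange) is neither achievable in general nor needed. The paper instead excludes $\cM' \subset \cH(2,1,1)$ for \emph{every} rank: rank one by Theorem \ref{Rank1PrinStratum} (together with Theorem \ref{NoZeroInNonVary}), rank two by Theorem \ref{NoAISLowerStrata}, and rank three by the orthogonality in Theorem \ref{ForniConst}. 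Your plan never invokes Theorem \ref{Rank1PrinStratum}, yet without it the rank one boundary case --- the hardest part of the whole paper --- is open, and the degeneration yields no contradiction.
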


The final ingredient needed to prove the main theorem concerns rank one affine manifolds.  In order to prove this theorem, it will be necessary to understand deformations in an orbit closure when all of the absolute periods are fixed.  This is the motivation for the results in Section \ref{RELDefSect}.  Once these are established, they can be applied to prove Theorem \ref{Rank1PrinStratum}, and the proof can be found at the end of Section \ref{Rank1PrinStrSect}.

\begin{theorem}
\label{Rank1PrinStratum}
If $\mathcal{M}$ is a rank one affine manifold with non-trivial Forni subspace in $\Omega\cM_3$, then $\mathcal{M}$ is a Teichm\"uller curve in $\mathcal{H}(1,1,1,1)$.  Equivalently, there are no rank one affine manifolds with non-trivial REL\footnote{See the definition in Section \ref{RELDefSect}.} and non-trivial Forni subspace in $\Omega\cM_3$, and any Teichm\"uller curves with non-trivial Forni subspace lie in the principal stratum.
\end{theorem}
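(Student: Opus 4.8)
The plan is to prove the equivalent formulation, which splits the statement into two independent assertions: first, that a rank one affine manifold with non-trivial Forni subspace has trivial REL and is therefore a Teichm\"uller curve, and second, that such a Teichm\"uller curve must lie in the principal stratum. The second assertion is then immediate: once $\mathcal{M}$ is known to be a Teichm\"uller curve with non-trivial Forni subspace, Theorem \ref{NoZeroInNonVary} places it in $\mathcal{H}(1,1,1,1)$ (consistently, $\mathcal{H}(4)$ carries no REL at all, since its surfaces have a single zero, so rank one forces a Teichm\"uller curve there, excluded by Corollary \ref{NoZeroTeichCurvesInH4}). Thus the entire content lies in the first assertion, which I argue by contradiction: assume $\mathcal{M}$ is rank one, has non-trivial REL, and has a non-trivial Forni subspace. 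By Proposition \ref{Gen3ZeroImpForni} the Forni subspace has even dimension $2$ or $4$; dimension $4$ means four zero exponents and is already classified as a Teichm\"uller curve in the principal stratum by \cite{AulicinoCompDegKZ, AulicinoCompDegKZAIS}, so I may assume $\dim F = 2$.

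The structural setup is the splitting $H^1(X,\mathbb{R}) = V \oplus F \oplus W$ into three symplectically orthogonal planes, where $V = p(T_{\mathbb{R}}(\mathcal{M}))$ is the tautological plane (rank one), $F$ is the Forni subspace, and $W$ is the residual symplectic complement; this is legitimate by Theorem \ref{ForniConst}, which also records that $F$ is constant along the orbit closure under the Gauss--Manin connection. The first key input is the Forni Criterion \cite{ForniCriterion}: in any completely periodic direction, the core curves of the cylinders are symplectically orthogonal to $F$, so their Poincar\'e duals lie in $F^\perp = V \oplus W$. The second key input is the constancy of $F$: in the flat trivialization of the cohomology bundle, $F^\perp$ is one and the same $4$-dimensional subspace for every surface and every completely periodic direction in $\mathcal{M}$. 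Together these let me compare the homology classes of cylinders in \emph{non-parallel} directions --- the comparison the Forni Criterion alone cannot make --- by confining all of their core-curve duals to the single fixed subspace $F^\perp$.

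The heart of the argument is to show that non-trivial REL forces these core curves to span more than four dimensions, contradicting their confinement to $F^\perp$. Here I would use the deformation theory of Section \ref{RELDefSect}, which describes the absolute-period-fixing (REL) deformations of a rank one non-Teichm\"uller manifold in terms of its cylinder structure. Non-trivial REL supplies tangent directions killed by $p$; by the cylinder-deformation theory of \cite{WrightCylDef} these are realized as relative cylinder shears independent of the tautological shear, which in a completely periodic direction forces at least two independent $\mathcal{M}$-parallel cylinder classes whose absolute parts are nonetheless proportional in $V$. Running the REL deformation then alters the periodicity data and opens cylinders in genuinely new directions. Tracking the resulting core curves through the deformation and reading their classes in the fixed trivialization, I aim to exhibit a cylinder whose core-curve dual has non-zero $F$-component --- equivalently, to show that the union of all core-curve duals over the orbit closure spans all of $H^1(X,\mathbb{R})$. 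Either conclusion contradicts the Forni Criterion and forces $\dim F = 0$.

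The main obstacle is precisely this last step: proving that REL genuinely pushes core curves out of $F^\perp$, rather than permitting a conspiratorial configuration in which every cylinder in every direction reachable by REL and \splin{} keeps its core curve inside the fixed $4$-plane, as genuinely happens for the rigid, REL-free Teichm\"uller curves such as the Eierlegende Wollmilchsau. Carrying this out requires combining the REL-deformation description of Section \ref{RELDefSect} with the enumeration of admissible cylinder configurations in Table \ref{MainConfigTable}, across the strata $\mathcal{H}(2,2)$, $\mathcal{H}(3,1)$, $\mathcal{H}(2,1,1)$, and $\mathcal{H}(1,1,1,1)$ that support REL in genus three. Because there are more than thirty cylinder diagrams, I expect the decisive work to be organizing the argument around the homological constraint that core curves lie in the constant $F^\perp$, so that it applies uniformly rather than diagram by diagram; the constancy of the Forni subspace is exactly the tool that makes such a uniform argument conceivable.
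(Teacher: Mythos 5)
Your skeleton matches the paper's at the level of ideas: the reduction to $\dim F = 2$, the use of Theorem \ref{NoZeroInNonVary} for the Teichm\"uller-curve half, and above all the identification of the genuinely new mechanism --- using the constancy of $F$ to compare core curves of \emph{non-parallel} cylinders by confining their duals to a single fixed subspace. That mechanism is precisely Lemma \ref{ForniVanCyls} together with Corollary \ref{HomBasisForniContraCor} in the paper. But one correction first: the statement that every core-curve dual pairs to zero with $F$ does \emph{not} follow from the Forni Criterion, which only constrains the homological span of \emph{parallel} cylinders in one periodic direction (this is what bounds that span by two in Lemma \ref{Gen3TopConfigs}). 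The paper proves the per-cylinder orthogonality by a separate analytic argument: since the KZ-cocycle acts isometrically on $F$, the inner product $\langle \eta, c_{\gamma} \rangle$ is constant over $\cM$; stretching the cylinders parallel to $C$ drives the hyperbolic length of $\gamma$ to zero, so the Hodge norm of $c_{\gamma}$ tends to zero by \cite[Thm. 3.1]{AthreyaBufetovEskinMirzakhani}, and with \cite[Thm. 1.5]{AvilaEskinMollerForniBundle} the constant must be $0$. You would need to supply this argument (or an equivalent); it is not a citation-level step.

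The more serious gap is that the step you explicitly defer --- showing that REL genuinely pushes core curves out of $F^{\perp}$ --- \emph{is} the proof, and the uniform argument you hope for does not exist in the paper and seems doubtful: Configuration 6) (two homologous cylinders) is exactly the near-conspiracy you worry about, with all horizontal core curves spanning one dimension, and the paper escapes it not by a span count but by an explicit REL twist producing a transverse periodic direction with cylinders of distinct circumferences (Lemma \ref{3PlusCyls}). Moreover, the span/Lagrangian contradiction of Corollary \ref{HomBasisForniContraCor} is only one of \emph{two} mechanisms the paper needs. Several cases --- Configuration 2) with unequal heights, Cylinder Diagrams 3B) and 3C), and Diagram 4A) in Cases (L2)/(L3) --- are instead handled by collapsing a cylinder along a REL stretch to land in a lower stratum while the $2$-dimensional Forni subspace persists (Lemma \ref{DetLocusClosed}), followed by an induction on the number of zeros starting from $\cH(4)$ (Corollary \ref{NoZeroTeichCurvesInH4}); each such collapse requires verifying that exactly one saddle connection between distinct zeros degenerates, so no curve pinches and the limit stays in genus three. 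For Diagram 4A) one must additionally enumerate the possible linear equations among the cylinder heights (Cases (L1)--(L3)), because $\cM$ may be only $3$-dimensional, and Case (L2) needs the isometric-cylinders dichotomy of Lemmas \ref{CylDiags4L2Lem1} and \ref{CylDiags4L2Lem2}. So your proposal correctly reconstructs the strategy, but the configuration-by-configuration analysis, the degeneration-plus-induction mechanism, and the pinching verifications are all missing, and they constitute the bulk of the paper's proof.
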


The following lemma was proved in greater generality than is needed here in \cite{AulicinoCompDegKZ, AulicinoCompDegKZAIS}.  In this context we consider sequences of genus three surfaces converging to a genus three surface, so that at the level of the moduli space of Riemann surfaces (forgetting the bundle of Abelian differentials), all of the sequences converge in a compact subset of the space.  However, in the bundle of Abelian differentials, it is possible for zeros to collide and the stratum to change.

\begin{lemma}
\label{DetLocusClosed}
If an affine manifold $\mathcal{M} \subset \mathcal{H}(\kappa)$ has a non-trivial Forni subspace, and the closure of $\mathcal{M}$ contains an affine manifold $\mathcal{M}' \subset \mathcal{H}(\kappa')$ in a boundary stratum in the same genus as $\mathcal{H}(\kappa)$, then $\mathcal{M}'$ also has a non-trivial Forni subspace.
\end{lemma}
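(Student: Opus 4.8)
The plan is to reduce the statement to the relative closedness of the determinant locus $\mathcal{D}_g$ under the type of degeneration described, namely one in which the underlying Riemann surface remains in a compact part of the moduli space of curves while the zeros of the Abelian differential collide. The first step is to recall the link between a non-trivial Forni subspace and $\mathcal{D}_g$ established in \cite{AulicinoCompDegKZ, AulicinoCompDegKZAIS} following \cite{ForniDev}: since by Theorem \ref{ForniConst} the Forni subspace is a constant sub-bundle on which the Kontsevich-Zorich cocycle acts by Hodge isometries, the associated second fundamental form, computed by the Ahlfors-Rauch matrix $d\Pi/\mu_\omega$, must vanish on the holomorphic part of this sub-bundle at every point. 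A non-trivial Forni subspace therefore forces $d\Pi/\mu_\omega$ to be degenerate, i.e. $\det(d\Pi/\mu_\omega) = 0$, at every $(X,\omega) \in \mathcal{M}$; in other words $\mathcal{M} \subseteq \mathcal{D}_g$.

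The heart of the argument is then to show that $\mathcal{D}_g$ is relatively closed under the prescribed degeneration, so that this containment passes to the boundary. Let $(X_n, \omega_n) \in \mathcal{M}$ converge to $(X_\infty, \omega_\infty) \in \mathcal{M}'$. By hypothesis the curves $X_n$ converge to $X_\infty$ inside a compact subset of the moduli space of curves, so no node forms; consequently the Hodge bundle varies continuously, a basis $\{\theta_1,\dots,\theta_g\}$ of holomorphic differentials can be chosen to converge, and $\omega_n \to \omega_\infty$ as holomorphic one-forms even though the zero structure jumps. Because the Beltrami differential $\mu_\omega = \overline{\omega}/\omega$ has modulus one wherever it is defined, the integrand $\theta_i \theta_j \, \overline{\omega}/\omega$ has pointwise norm $|\theta_i||\theta_j|$, which is uniformly integrable over the compact family of surfaces, so dominated convergence yields
$$\int_{X_n} \theta_i \theta_j \frac{\overline{\omega_n}}{\omega_n} \longrightarrow \int_{X_\infty} \theta_i \theta_j \frac{\overline{\omega_\infty}}{\omega_\infty}$$
for all $i,j$. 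Hence $(X,\omega) \mapsto \det(d\Pi/\mu_\omega)$ is continuous across the stratum boundary, $\mathcal{D}_g$ is relatively closed there, and $\mathcal{M}' \subseteq \overline{\mathcal{M}} \subseteq \overline{\mathcal{D}_g} = \mathcal{D}_g$.

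It remains to upgrade the containment $\mathcal{M}' \subseteq \mathcal{D}_g$ back into the existence of a non-trivial Forni subspace on $\mathcal{M}'$. Here I would invoke the converse direction of the characterization from \cite{AulicinoCompDegKZ, AulicinoCompDegKZAIS}: on an affine manifold lying entirely in the determinant locus, the pointwise kernels of $d\Pi/\mu_\omega$ assemble, via the \splin-equivariance of the form together with the Forni Criterion \cite{ForniCriterion}, into a non-trivial \splin-invariant sub-bundle on which the cocycle is isometric, which is precisely a non-trivial Forni subspace. Alternatively, one can transport the constant Forni sub-bundle of $\mathcal{M}$ directly along the degenerating sequence: its fibers, being flat and of fixed positive dimension, converge to a subspace of $H^1(X_\infty,\mathbb{R})$ of the same dimension, and continuity of the Hodge norm and of the cocycle shows the limit remains isometric and invariant over $\mathcal{M}'$.

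The main obstacle is the continuity step across the stratum change. Unlike a degeneration within a single stratum, here the local structure of the defining equations of $\mathcal{D}_g$ changes as zeros collide, and $\mu_\omega$ is only defined away from the moving and merging zeros; the substance of the proof is therefore to verify that the singular integrands remain dominated by a fixed $L^1$ function throughout the compact family, so that no mass concentrates at the colliding zeros and the determinant genuinely extends continuously to $\mathcal{M}'$. Once this is secured, the closedness of $\mathcal{D}_g$ and the characterization of the Forni subspace combine to give the result.
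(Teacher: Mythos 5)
Your main route has a genuine gap at the third step, the ``upgrade'' from $\mathcal{M}' \subseteq \mathcal{D}_g$ back to a non-trivial Forni subspace. The first two steps are fine: a non-trivial Forni subspace does force $\mathcal{M} \subseteq \mathcal{D}_g$ (the second fundamental form vanishes on it), and since $|\overline{\omega}/\omega| = 1$ the determinant $\det(d\Pi/\mu_\omega)$ does extend continuously across the stratum boundary as long as the underlying curves stay in a compact part of $\mathcal{M}_g$, so $\mathcal{D}_g$ is relatively closed under the degenerations in question (the paper itself uses this closedness, but only in the proof of Theorem \ref{MainThmForniSp}, in a setting where the relevant sub-loci are already known to carry Forni subspaces). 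The converse implication you invoke, however, is not a theorem and the cited references do not supply it: pointwise vanishing of the determinant only says that $d\Pi/\mu_\omega$ has a kernel at each point, and these kernels can rotate, jump in dimension, and need not be invariant under the cocycle, so there is no mechanism making them assemble into a flat \splin -invariant subbundle on which the cocycle is isometric. The Forni Geometric Criterion of \cite{ForniCriterion} concerns the homological dimension of cylinder core curves and yields \emph{non}-vanishing of exponents; it says nothing about kernels of the second fundamental form. The arguments of \cite{AulicinoCompDegKZ, AulicinoCompDegKZAIS} treat the maximally degenerate situation, where the form vanishes identically on the full complement of the tautological plane, and do not apply to mere determinant vanishing. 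So membership of $\mathcal{M}'$ in $\mathcal{D}_g$, by itself, gives neither zero exponents nor a Forni subspace, and your main argument does not close.

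The one-sentence alternative at the end of your third paragraph is, in substance, the paper's actual proof, and it avoids $\mathcal{D}_g$ entirely: since the genus is preserved and both $X$ and $X'$ lie in a compact part of $\mathcal{M}_g$, the Gauss--Manin connection identifies $H^1(X,\mathbb{R})$ with $H^1(X',\mathbb{R})$; transporting classes $\eta_1, \eta_2$ from a basis of $F$, one gets for every fixed $T \geq 0$ the estimate $|\langle \eta'_1, \eta'_2 \rangle - G_T^{KZ} \cdot \langle \eta'_1, \eta'_2 \rangle| < \varepsilon + Me^T\varepsilon$, and letting $\varepsilon \to 0$ \emph{for each fixed $T$} (the order of quantifiers is the whole point, since $Me^T$ blows up in $T$) shows the limit classes are moved isometrically, hence lie in $F'$ by maximality. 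To make your sketch rigorous you would need exactly this quantitative step, which you do not supply. Note also that this argument transports a full basis of $F$ and so gives $\dim F' \geq \dim F$; the paper needs this stronger conclusion later (e.g.\ Lemma \ref{CylDiag2} uses that $\mathcal{M}'$ has a $2$-dimensional Forni subspace), whereas your determinant-locus route, even if repaired, would not control the dimension.
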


\begin{proof}
Since the genus is preserved under this degeneration, for every element $(X', \omega') \in \cM'$ and for all $(X, \omega) \in \cM$ in a neighborhood of $(X', \omega')$, there is a natural identification by the Gauss-Manin connection of the spaces $H^1(X', \bR)$ and $H^1(X, \bR)$.  This holds because both $X$ and $X'$ lie in a compact subset of the moduli space of Riemann surfaces.  Let $G_t^{KZ}$ denote the KZ-cocycle.  Let $\eta_1, \eta_2 \in F \subset H^1(X, \bR)$.  Under the identification, let $\eta'_1, \eta'_2 \in H^1(X', \bR)$ be the elements corresponding to $\eta_1$ and $\eta_2$, respectively.  The inner product can be taken to be either the Hodge inner product, which is defined later, or the symplectic inner product.  Both will work in this case by \cite{AvilaEskinMollerForniBundle}.  Given $\varepsilon > 0$, we can take $X$ sufficiently close to $X'$ so that $\eta_1, \eta_2 \in F(X)$ satisfies
$$|\langle \eta_1, \eta_2 \rangle - \langle \eta'_1, \eta'_2 \rangle| < \varepsilon.$$
By the triangle inequality and the fact that the KZ-cocycle is isometric on elements of $F$, for all $T \geq 0$,
$$|\langle \eta'_1, \eta'_2 \rangle - G_T^{KZ} \cdot \langle \eta'_1, \eta'_2 \rangle | \leq $$
$$|\langle \eta'_1, \eta'_2 \rangle - \langle \eta_1, \eta_2 \rangle| + |G_T^{KZ} \cdot \langle \eta_1, \eta_2 \rangle  - G_T^{KZ} \cdot \langle \eta'_1, \eta'_2 \rangle | < \varepsilon + Me^T \varepsilon,$$
where $M$ depends only on the genus.  Since $\varepsilon > 0$ can be taken arbitrarily small, we have that $\eta'_1, \eta'_2 \in F' \subset H^1(X', \bR)$.  This argument can be applied to a basis of $F$ so that the lemma follows.
\end{proof}

\begin{theorem}
\label{MainThmForniSp}
Let $\mathcal{M}$ be an affine manifold in the moduli space of Abelian differentials on genus three surfaces.  If the Forni subspace of $\mathcal{M}$ is $2$-dimensional, then $\mathcal{M}$ is a Teichm\"uller curve in the principal stratum $\mathcal{H}(1,1,1,1)$ with trace field of degree at most two over the rationals.  Furthermore, there are at most finitely many such Teichm\"uller curves.
\end{theorem}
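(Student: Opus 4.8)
The plan is to assemble Theorem \ref{MainThmForniSp} from the structural results already stated in this section, reducing the global statement to the two deep theorems (Theorems \ref{NoRank2PrinStratum} and \ref{Rank1PrinStratum}) and then appealing to equidistribution for finiteness. The starting observation is that by Theorem \ref{ForniConst}, the Forni subspace $F$ is symplectic and orthogonal to $p(T_{\mathbb{R}}(\mathcal{M}))$, which is itself symplectic of even dimension $2\,\text{Rank}(\mathcal{M})$. In genus three $\dim H^1(X,\mathbb{R}) = 6$, so a $2$-dimensional symplectic $F$ forces $\dim p(T_{\mathbb{R}}(\mathcal{M})) \leq 4$, i.e. $\text{Rank}(\mathcal{M}) \in \{1,2\}$. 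Thus I would immediately split into the rank one and rank two cases.

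First I would handle the \emph{stratum} question. For rank two, Theorem \ref{NoAISLowerStrata} excludes every stratum outside the principal one, and Theorem \ref{NoRank2PrinStratum} excludes $\mathcal{H}(1,1,1,1)$ itself, so no rank two affine manifold in genus three carries a non-trivial Forni subspace at all. For rank one, Theorem \ref{Rank1PrinStratum} asserts directly that $\mathcal{M}$ must be a Teichm\"uller curve in $\mathcal{H}(1,1,1,1)$. Combining these, any genus three affine manifold with $2$-dimensional Forni subspace is a Teichm\"uller curve in the principal stratum.

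Next I would pin down the trace field degree. Since $\mathcal{M}$ is now a Teichm\"uller curve, it has rank one, and by \cite[Thm. 1.5]{WrightFieldofDef} the product of the rank and the degree of the field of affine definition is bounded by $g = 3$; for a Teichm\"uller curve the field of affine definition coincides with the trace field, so its degree over $\mathbb{Q}$ is at most three. To improve this to two, I would use the Forni subspace: the trace field acts on $p(T_{\mathbb{R}}(\mathcal{M}))$, which is only $2$-dimensional, while $F$ is a genuinely separate $2$-dimensional constant summand. A cubic trace field would force a $6$-dimensional Galois-conjugate decomposition of $H^1$ compatible with the Hodge structure, leaving no room for the constant $2$-dimensional $F$ to coexist with the symplectic splitting; ruling out degree three is precisely the point where I would invoke the rigidity coming from $F$ being \splin-invariant and constant along the orbit. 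The details here are the delicate part of this step.

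The \textbf{main obstacle} is the \emph{finiteness} claim, and for it I would appeal to the equidistribution and isolation results of \cite{EskinMirzakhaniMohammadiOrbitClosures}. The idea is that affine manifolds of a fixed dimension in a fixed stratum cannot accumulate: if infinitely many such Teichm\"uller curves existed, their associated measures would equidistribute toward a single higher-dimensional affine manifold $\mathcal{M}'$ in $\mathcal{H}(1,1,1,1)$. By Lemma \ref{ForniDimSubMan}, the limiting manifold $\mathcal{M}'$ would inherit a Forni subspace of dimension at least two (since the Forni dimension cannot drop in the limit, using Lemma \ref{DetLocusClosed} to control degenerations), so $\mathcal{M}'$ would be a higher-dimensional affine manifold in genus three with non-trivial Forni subspace — but such a manifold has rank at least two, contradicting the exclusion of rank two manifolds with non-trivial Forni subspace established via Theorems \ref{NoAISLowerStrata} and \ref{NoRank2PrinStratum}. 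This contradiction yields finiteness. I expect the genuine technical difficulty to lie in ensuring the limiting object is itself an affine manifold to which \ref{ForniConst} and \ref{ForniDimSubMan} apply, and in verifying that the Forni dimension is semicontinuous under the relevant equidistribution limit.
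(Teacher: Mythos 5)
Your skeleton matches the paper's proof: rank $\in\{1,2\}$ via the orthogonality in Theorem \ref{ForniConst}, rank two killed by Theorems \ref{NoAISLowerStrata} and \ref{NoRank2PrinStratum}, rank one reduced to a Teichm\"uller curve in $\mathcal{H}(1,1,1,1)$ by Theorem \ref{Rank1PrinStratum}, and finiteness via equidistribution from \cite{EskinMirzakhaniMohammadiOrbitClosures}. However, your trace-field step has a genuine gap. The paper's argument is short and different: a genus three Teichm\"uller curve with cubic trace field is algebraically primitive, and by \cite[Cor. 3]{ForniCriterion} algebraically primitive Teichm\"uller curves have no zero Lyapunov exponents, hence trivial Forni subspace. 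Your proposed mechanism --- that a cubic trace field leaves ``no room'' for a constant $2$-dimensional $F$ in the Galois-conjugate splitting --- does not close as linear algebra: with cubic trace field one gets $H^1(X,\mathbb{R}) = L \oplus L^{\sigma} \oplus L^{\sigma'}$ with all three summands flat, $2$-dimensional, symplectic, and mutually orthogonal, and nothing in that decomposition forbids a conjugate block $L^{\sigma}$ from itself being a Forni subspace (a flat symplectic plane orthogonal to $p(T_{\mathbb{R}}(\mathcal{M}))$ is exactly what a Forni subspace looks like). Excluding isometric behavior on the conjugate blocks requires the geometric input of Forni's criterion (cylinder core curves on a Veech surface spanning a Lagrangian), which is precisely the citation you are missing; you flag the step as ``delicate,'' but the route you sketch would not succeed without it.

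The finiteness step also contains a concrete error. Lemma \ref{ForniDimSubMan} gives the inequality in the wrong direction for your purposes: applied to a Teichm\"uller curve $C \subset \mathcal{M}'$ it yields $2 = \inf_{x \in C} \dim F(x) \geq \inf_{x \in \mathcal{M}'} \dim F(x)$, i.e.\ an \emph{upper} bound on the Forni dimension of the limiting manifold, not the lower bound you need. Likewise Lemma \ref{DetLocusClosed} concerns degenerations to boundary strata in the same genus, not limits of curves contained inside a higher-dimensional affine manifold, so neither lemma supplies the semicontinuity you invoke. The paper's actual mechanism is the closedness of the determinant locus: every surface on a curve with non-trivial Forni subspace lies in $\mathcal{D}_g$, the equidistributing curves force all of $\mathcal{M}'$ into the closed set $\mathcal{D}_g$, and hence $\mathcal{M}'$ has non-trivial Forni subspace, contradicting the rank one and rank two exclusions already established. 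You correctly identified semicontinuity of the Forni dimension as the crux, but the tools you cite do not deliver it; with the determinant-locus argument substituted in, your outline becomes the paper's proof.
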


\begin{proof}
By Theorem \ref{NoZeroInNonVary}, there are no Teichm\"uller curves with non-trivial Forni subspace outside of the principal stratum.  By Theorem \ref{Rank1PrinStratum}, there are no rank one affine manifolds with non-trivial REL and non-trivial Forni subspace.  By Theorems \ref{NoAISLowerStrata} and \ref{NoRank2PrinStratum}, there are no rank two affine manifolds with non-trivial Forni subspace.  Since \cite{AvilaEskinMollerForniBundle} implies that the Forni subspace is orthogonal to the projection of the tangent subspace into absolute homology, there can be no rank three affine manifolds with non-trivial Forni subspace.

Since the degree of the trace field is bounded by the genus, it suffices to show that the trace field cannot be cubic.  A Teichm\"uller curve in genus three with cubic trace field is called algebraically primitive.  By \cite[Cor. 3]{ForniCriterion}, all algebraically primitive Teichm\"uller curves have no zero Lyapunov exponents.  Hence, the Forni subspace is trivial.

By contradiction, if there were infinitely many Teichm\"uller curves with non-trivial Forni subspace, then they would equidistribute to a higher dimensional affine manifold by \cite{EskinMirzakhaniMohammadiOrbitClosures}, and the higher dimensional orbit closure would also have non-trivial Forni subspace because $\mathcal{D}_g$ is closed.  Since no such higher dimensional affine manifold exists by the aforementioned theorems, there cannot be infinitely many such Teichm\"uller curves.
\end{proof}

\section{REL Cylinder Deformations}
\label{RELDefSect}

In this section, we will consider rank one affine manifolds that are not Teichm\"uller curves.  The results in this section do not make any assumptions about a specific genus or the dimension of a Forni subspace.  In order to take advantage of the extra dimensions of the affine manifold (those beyond the dimensions of a Teichm\"uller curve), it is essential for us to describe deformations of a translation surface that cannot be described directly by \splin , yet allow us to remain in the affine manifold.  This was accomplished in \cite{WrightCylDef} when he described how to deform cylinders while remaining in a given affine manifold.  The results below follow immediately from his paper as evinced by their proofs.

We begin by recalling perhaps the most important fact about rank one affine manifolds.  Recall that a translation surface is \emph{completely periodic} if every direction that admits a cylinder fully decomposes into cylinders.  The following theorem first appeared in \cite[Thm. 1.5]{WrightCylDef} and a simpler proof was given in \cite[Prop. 4.20]{WrightSurvey}.

\begin{theorem}\cite{WrightCylDef, WrightSurvey}
\label{QuadImpCP}
Let $\mathcal{M}$ be an affine manifold.  If $\text{Rank}(\mathcal{M}) = 1$, then every translation surface in $\mathcal{M}$ is completely periodic.
\end{theorem}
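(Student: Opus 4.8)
The plan is to fix a translation surface $M=(X,\omega)\in\mathcal{M}$ together with a direction containing at least one cylinder; after rotating by $\text{SO}_2(\mathbb{R})$ I may assume this direction is horizontal, so it suffices to show that the horizontal cylinders of $M$ cover all of $M$. Let $C_1,\dots,C_k$ be the maximal horizontal cylinders, with heights $h_i>0$, circumferences $c_i>0$, and core curves $\gamma_i$, and set $w=\sum_{i=1}^k h_i\,\mathrm{PD}(\gamma_i)\in H^1(X,\mathbb{R})$, the Poincar\'e dual of the weighted sum of core curves. The idea is to show, on the one hand, that $w\in p(T_{\mathbb{R}}(\mathcal{M}))$, and on the other hand, that the rank one hypothesis forces $w$ to be a nonzero multiple of $[\mathrm{Im}\,\omega]$; a homology class coming from the non-periodic part of $M$ will then produce a contradiction unless the cylinders fill.

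First I would establish that $w\in p(T_{\mathbb{R}}(\mathcal{M}))$. Partition the $C_i$ into their $\mathcal{M}$-parallel classes. By the Cylinder Deformation Theorem of \cite{WrightCylDef}, shearing any one such class (leaving the rest of $M$, including any non-periodic part, fixed) is a path inside $\mathcal{M}$; its tangent vector, projected to absolute cohomology, equals $\sum_{i\in\mathcal{C}} h_i\,\mathrm{PD}(\gamma_i)$, because shearing $C_i$ changes the period of a transverse cycle $\delta$ by $h_i(\delta\cdot\gamma_i)$ in the real part and nothing in the imaginary part. Summing over all parallel classes gives $w\in p(T_{\mathbb{R}}(\mathcal{M}))$. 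Now I invoke $\mathrm{Rank}(\mathcal{M})=1$: by Theorem \ref{ForniConst}, $p(T_{\mathbb{R}}(\mathcal{M}))$ is a two-dimensional symplectic subspace, and it always contains $[\mathrm{Re}\,\omega]$ and $[\mathrm{Im}\,\omega]$, so $p(T_{\mathbb{R}}(\mathcal{M}))=\mathrm{span}([\mathrm{Re}\,\omega],[\mathrm{Im}\,\omega])$. Since each $\gamma_i$ is horizontal, $\langle w,[\mathrm{Im}\,\omega]\rangle=\sum_i h_i\int_{\gamma_i}\mathrm{Im}\,\omega=0$, and as $\langle[\mathrm{Re}\,\omega],[\mathrm{Im}\,\omega]\rangle=\mathrm{Area}(X)\neq0$ this forces the $[\mathrm{Re}\,\omega]$-component of $w$ to vanish, so $w=b\,[\mathrm{Im}\,\omega]$ for some scalar $b$. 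Pairing instead with $[\mathrm{Re}\,\omega]$ gives $\langle w,[\mathrm{Re}\,\omega]\rangle=\sum_i h_i c_i=\sum_i\mathrm{Area}(C_i)>0$, whence $b\neq0$.

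Finally I would derive complete periodicity by contradiction. Suppose the horizontal cylinders do not cover $M$. Then the complement has positive area and, by the decomposition of the horizontal foliation into periodic and minimal components, contains a minimal component $N$ of positive area. Inside the interior of $N$ I would construct a closed curve $\delta$ with $\int_\delta\mathrm{Im}\,\omega\neq0$: take a long vertical segment in $N$ and close it up by a short horizontal arc within $N$ (possible by minimality of the horizontal flow on $N$), so that the vertical segment contributes a large imaginary period while the horizontal arc contributes none. Since $\delta$ lies in the interior of $N$, it is disjoint from the horizontal cylinders, and therefore $\delta\cdot\gamma_i=0$ for all $i$. Pairing $w$ with $\mathrm{PD}(\delta)$ then gives $0=\sum_i h_i(\delta\cdot\gamma_i)=\langle w,\mathrm{PD}(\delta)\rangle=b\int_\delta\mathrm{Im}\,\omega\neq0$, a contradiction. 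Hence the horizontal cylinders fill $M$, and as the direction was an arbitrary one containing a cylinder, $M$ is completely periodic.

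The step I expect to be the main obstacle is the construction of the cycle $\delta$ in the non-periodic part: one must produce a genuine closed curve lying off the cylinders, hence with zero algebraic intersection with every core curve, yet detecting a nonzero vertical period. The purely homological shortcut—arguing that positive area of $N$ forces $[\mathrm{Im}\,\omega]\neq0$ in $H^1(N;\mathbb{R})$—is delicate because $N$ has boundary and $H^2(N)=0$, so exactness of $\mathrm{Im}\,\omega$ as an absolute class need not kill the area integral; this is precisely why I would use the direct dynamical construction of $\delta$ from minimality of the horizontal flow on $N$. The only other point requiring care is the exact normalization of the shears in the Cylinder Deformation Theorem, but since all weights $h_i$ are strictly positive the argument is insensitive to it.
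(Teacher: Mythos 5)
The paper itself gives no proof of Theorem \ref{QuadImpCP}: it is quoted from \cite{WrightCylDef} (Thm.\ 1.5), with the simpler proof in \cite{WrightSurvey} (Prop.\ 4.20), and your argument is essentially that simpler proof. The class $w=\sum_i h_i\,\mathrm{PD}(\gamma_i)$ lies in $p(T_{\mathbb{R}}(\mathcal{M}))$ by the cylinder deformation theorem applied to each $\mathcal{M}$-parallel class; rank one gives $p(T_{\mathbb{R}}(\mathcal{M}))=\mathrm{span}([\mathrm{Re}\,\omega],[\mathrm{Im}\,\omega])$, and your two pairing computations correctly force $w=b\,[\mathrm{Im}\,\omega]$ with $b\neq 0$. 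All of that is sound.

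The one step that fails as written is exactly the one you flagged: the construction of $\delta$ in the minimal component $N$. You take a long \emph{vertical} segment in $N$ and close it by a short \emph{horizontal} arc, ``possible by minimality of the horizontal flow.'' This does not work for two reasons: a long vertical segment need not stay inside $N$ at all, since $\partial N$ consists of horizontal saddle connections which vertical trajectories cross; and minimality of the horizontal flow gives density of horizontal leaves, not exact horizontal closure --- the two endpoints of a vertical segment will not in general lie on a single horizontal leaf, so no horizontal arc joins them. The roles must be swapped. Take a short vertical interval $I$ in the interior of $N$, transverse to the horizontal foliation; by minimality the horizontal orbit of an endpoint of $I$ returns to $I$, and since a minimal component contains no closed horizontal leaf the return point is distinct from the starting point. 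Closing up along the subinterval of $I$ between them yields a closed curve $\delta\subset\mathrm{int}(N)$ whose horizontal piece contributes nothing to $\int_\delta \mathrm{Im}\,\omega$ and whose vertical piece contributes its positive length, so $\int_\delta \mathrm{Im}\,\omega\neq 0$, while $\delta\cdot\gamma_i=0$ for all $i$ by disjointness. With this standard repair your contradiction $0=\langle w,\mathrm{PD}(\delta)\rangle=b\int_\delta \mathrm{Im}\,\omega\neq 0$ goes through, and the proof coincides with the cited one.
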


\begin{definition}
Let $p: H^1(X, \Sigma, \cdot) \rightarrow H^1(X, \cdot)$ be the canonical projection from relative to absolute cohomology and $T(\mathcal{M}) \subset H^1(X, \Sigma)$ be the tangent space to an affine manifold $\mathcal{M}$.  Then $\mathcal{M}$ has \emph{non-trivial REL} if
$$\dim \ker p |_{T(\mathcal{M})} \not= 0.$$
\end{definition}

The first observation is that \cite[Thm. 1.1]{WrightCylDef}, which relates the rank of an affine manifold to admissible cylinder deformations, is trivial for rank one manifolds because the cylinder deformations are exactly given by the usual action by \splin .  For a Veech surface, this does indeed describe all possible deformations that preserve its Teichm\"uller curve.  On the other hand, in the presence of non-trivial REL, it is often possible to deform cylinders while fixing all absolute periods.  The goal of this section is to prove that in a rank one affine manifold with non-trivial REL, there always exists a translation surface on which we can perform cylinder deformations analogous to the ones that can be performed on translation surfaces in higher rank manifolds.  Recall the definition of a cylinder deformation introduced in \cite[Sect. 3]{WrightCylDef}.  The deformation in his paper generalizes a cylinder twist.  We also define the analogous generalization of a cylinder stretch.  Let
$$u_t = \left(\begin{array}{cc} 1 & t\\0 & 1 \end{array}\right), \quad a_s = \left(\begin{array}{cc} 1 & 0\\0 & e^s \end{array}\right).$$

\begin{definition}
Let $M$ be a translation surface that decomposes into horizontal cylinders $\mathcal{C} = \{C_1, \ldots, C_r\}$ such that $M = \cup_i C_i$.  Define the \emph{generalized cylinder twist} $u_{t_1, \ldots, t_r}^{\mathcal{C}} = u_{\tau}^{\mathcal{C}}$ to be a deformation given by multiplying $C_i$ by $u_{t_i}$, for all $i$, where $t_i \in \mathbb{R}$.  Similarly, define the \emph{generalized cylinder stretch} $a_{t_1, \ldots, t_r}^{\mathcal{C}} = a_{\tau}^{\mathcal{C}}$ to be a deformation given by multiplying $C_i$ by $a_{t_i}$, for all $i$, where $t_i \in \mathbb{R}$.
\end{definition}

For the sake of brevity, we call a generalized cylinder twist (resp. generalized cylinder stretch) of $M$ a \emph{REL twist} (resp. \emph{REL stretch}) of $M$ if it fixes all of the absolute periods of $M$.  This will suffice for the remainder of this paper.  Finally, we recall the definitions of two important subspaces of the real tangent space to an affine manifold.

\begin{definition}
Let $M \in \mathcal{M}$ be horizontally periodic.  Define the \emph{twist space} of $\mathcal{M}$ at $M$, $\text{Twist}(M, \mathcal{M}) \subset T_{\mathbb{R}}(\mathcal{M})$, to be the subspace of cohomology classes in $T_{\mathbb{R}}(\mathcal{M})$ that are zero on all horizontal saddle connections.

Define the \emph{cylinder preserving space} of $\mathcal{M}$ at $M$ to be the subspace $\text{Pres}(M, \mathcal{M}) \subset T_{\mathbb{R}}(\mathcal{M})$ of cohomology classes that are zero on the core curves of all horizontal cylinders.
\end{definition}

It is clear from the definitions that $\text{Twist}(M, \mathcal{M}) \subseteq \text{Pres}(M, \mathcal{M})$ always holds.

\begin{lemma}[\cite{WrightCylDef} Lem. 8.6]
\label{WrightLem86}
Let $M \in \mathcal{M}$ be horizontally periodic.  If $\text{Twist}(M, \mathcal{M}) \not= \text{Pres}(M, \mathcal{M})$, then there exists a horizontally periodic surface in $\mathcal{M}$ with more horizontal cylinders than $M$.
\end{lemma}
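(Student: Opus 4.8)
The plan is to distill the strict inclusion into a single witnessing cohomology class and to integrate it to an honest deformation inside $\mathcal{M}$. Choose $v \in \text{Pres}(M,\mathcal{M}) \setminus \text{Twist}(M,\mathcal{M})$; by definition $v$ is a real tangent class that vanishes on the core curve of every horizontal cylinder but is nonzero on at least one horizontal saddle connection $\sigma_0$. Since $\mathcal{M}$ is locally linear in period coordinates and $v \in T_{\mathbb{R}}(\mathcal{M})$, I can integrate $v$ to a path $M_s \in \mathcal{M}$. Reading $v$ as a horizontal (real) deformation, the vanishing on core curves means each core-curve holonomy is unchanged, so the whole horizontal cylinder decomposition of $M$ persists along the path with its heights and circumferences fixed; the only effect is to move the zeros relative to one another along the horizontal boundaries, changing the lengths of the horizontal saddle connections linearly in $s$. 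Because $v(\sigma_0)\neq 0$, this motion is genuinely nontrivial, and on each cylinder boundary the participating saddle-connection lengths sum to the (fixed) circumference, so the parameter $s$ is confined to a maximal interval at whose endpoint the present cylinder diagram degenerates.

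Next I would follow this family toward the boundary of the locus carrying the current cylinder diagram and use that affine invariant manifolds are closed to extract a limiting surface still in $\mathcal{M}$. Renormalizing by the Teichm\"uller geodesic flow $g_t$ keeps the deformed surfaces in the unit-area locus and in a fixed genus, so the same-genus convergence used in the setup of Lemma \ref{DetLocusClosed} applies and a subsequential limit $M_\infty \in \mathcal{M}$ exists. The defining features of the deformation — it alters no core curve and collapses no cylinder — are exactly what I would use to argue that $M_\infty$ stays in the same stratum (no two zeros are forced to collide, no cylinder is pinched away) and is again horizontally periodic, with all the original cylinders surviving into the limit.

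Finally I would count the horizontal cylinders of $M_\infty$ and show there are strictly more than on $M$. The heuristic is that the relative drift of the zeros recorded by $v$, which is precisely the freedom that $v \notin \text{Twist}$ detects, produces in the renormalized limit a new alignment of zeros, hence a new horizontal saddle connection, hence an additional cylinder, all without merging zeros. The main obstacle is exactly this last step: one must verify that the limit neither degenerates out of the stratum nor loses cylinders, but instead genuinely gains one. This is the delicate geometric and combinatorial analysis, and it is where the detailed machinery of \cite{WrightCylDef} enters; by contrast the earlier steps are formal consequences of local linearity of $\mathcal{M}$, its closedness, complete periodicity in rank one via Theorem \ref{QuadImpCP}, and the defining properties of $\text{Twist}$ and $\text{Pres}$.
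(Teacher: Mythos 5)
First, a framing point: the paper offers no proof of this lemma at all --- it is quoted verbatim from \cite{WrightCylDef} (Lem.\ 8.6) --- so your attempt must be measured against Wright's argument. Measured that way, there is a genuine gap, and it is located exactly where you chose the deformation direction. You integrate $v \in \text{Pres}(M,\mathcal{M}) \setminus \text{Twist}(M,\mathcal{M})$ as a \emph{real} (horizontal) deformation. But a real deformation changes only the real parts of relative periods and freezes every imaginary part: all cylinder heights are fixed, every saddle connection that was horizontal stays horizontal (its holonomy stays real), and the surface remains horizontally periodic with exactly the same cylinders for the entire maximal interval --- the zeros merely slide horizontally along the boundary circles. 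Crucially, a \emph{new} horizontal saddle connection (your proposed source of the extra cylinder) would require some relative class with nonzero imaginary period to become real, which is impossible when imaginary parts are constant along the path. So the ``new alignment of zeros'' you invoke never occurs; the only terminal event is that a horizontal saddle connection's length reaches zero, i.e.\ two zeros collide and the family exits the stratum. Your limiting step fails for the same reason: affine invariant manifolds are closed \emph{in their stratum}, so $g_t$-renormalization and the same-genus convergence of Lemma \ref{DetLocusClosed} do not produce a horizontally periodic $M_\infty \in \mathcal{M}$ with more cylinders --- they produce, at best, a boundary degeneration. You candidly flag the last step as the obstacle; in fact, with a real deformation that step is not merely unverified but false.

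Wright's proof uses the orthogonal direction. Since $T_{\mathbb{C}}(\mathcal{M}) = \mathbb{C} \otimes T_{\mathbb{R}}(\mathcal{M})$, one may deform along $i t v$ with the same witness $v$. Because $v$ vanishes on core curves, every core holonomy stays real and the horizontal cylinders of $M$ persist as horizontal cylinders for small $t$; but $v(\sigma_0) \neq 0$ gives $\sigma_0$ a nonzero \emph{vertical} component, so the zeros drift vertically relative to one another and the persisted cylinders no longer fill the surface. One then applies the Smillie--Weiss minimal set theorem (\cite{SmillieWeissMinSets}, Cor.\ 6, which this paper itself quotes in Section \ref{Rank2PrinStrSect}) to find a horizontally periodic surface in the closure of the horocycle orbit of the deformed surface; since the horocycle flow $u_s$ fixes all imaginary parts, the persisted cylinders, their heights, and the vertical positions of the zeros survive into the limit, and horizontal periodicity then forces at least one additional cylinder to account for the area outside them. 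So your overall scaffolding (a witnessing class, integration inside $\mathcal{M}$, passage to a limit) is sound, but the mechanism must be vertical drift of zeros plus a recurrence/minimal-set argument, not horizontal drift plus geodesic-flow renormalization.
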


Throughout this paper the conclusion $\text{Twist}(M, \mathcal{M}) \not= \text{Pres}(M, \mathcal{M})$ will be used interchangeably with the conclusion that there exists a translation surface in $\cM$ with more cylinders than $M$.  A similar argument to the proof of the following theorem occurs in \cite[Pf. of Lem. 3.1]{LanneauNguyenWrightFinInNonArithRkOne}.

\begin{theorem}\footnote{The proof of this theorem was provided by Alex Wright.}
\label{RELDefs}
If $\mathcal{M}$ is a rank one orbit closure with non-trivial REL, then there exists a horizontally periodic translation surface $M \in \mathcal{M}$ such that there exists a linear path in period coordinates of REL twists $u_{\tau}$ satisfying $u_{\tau} \cdot M \in \mathcal{M}$, and a linear path in period coordinates of REL stretches $a_{\tau}$ satisfying $a_{\tau} \cdot M \in \mathcal{M}$.  Moreover, the path of REL stretches can be continued as long as the cylinders persist and the zeros do not collapse.
\end{theorem}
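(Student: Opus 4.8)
The plan is to locate one well-chosen surface on which the non-trivial REL forced by hypothesis is realized \emph{geometrically} as a cylinder twist, and then to obtain the stretch as its imaginary-direction counterpart. First I would produce a horizontally periodic $M \in \mathcal{M}$ carrying the maximal possible number of horizontal cylinders. Since $\text{Rank}(\mathcal{M}) = 1$, Theorem \ref{QuadImpCP} guarantees that every translation surface in $\mathcal{M}$ is completely periodic, so after rotating a periodic direction to the horizontal we obtain at least one horizontally periodic surface; as the number of cylinders is bounded in terms of the genus, a maximum is attained (equivalently, one repeatedly invokes Lemma \ref{WrightLem86} until the cylinder count cannot increase). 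For this maximal $M$, Lemma \ref{WrightLem86} forces
$$\text{Twist}(M, \mathcal{M}) = \text{Pres}(M, \mathcal{M}).$$

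The crux is to feed the REL hypothesis into this equality. Because $\text{Rank}(\mathcal{M}) = 1$ the dimension of $\ker p|_{T_{\mathbb{R}}(\mathcal{M})}$ equals $\dim T_{\mathbb{R}}(\mathcal{M}) - 2$ and is thus constant over $\mathcal{M}$, and by non-trivial REL it is positive; so I may choose a nonzero real REL class $w \in \ker p|_{T_{\mathbb{R}}(\mathcal{M})}$ at $M$. Since $p(w) = 0$, the class $w$ pairs trivially with every absolute cycle, in particular with the core curves of all horizontal cylinders, whence $w \in \text{Pres}(M, \mathcal{M})$. By the displayed equality, $w \in \text{Twist}(M, \mathcal{M})$, i.e. $w$ vanishes on every horizontal saddle connection. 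On a horizontally periodic surface every relative class vanishing on the horizontal saddle connections is a combination of the individual cylinder shears $\sigma_1, \ldots, \sigma_r$ (the value on a curve crossing $C_i$ recovers the coefficient), so $w = \sum_i t_i \sigma_i$ for some $\tau = (t_1, \ldots, t_r)$; that is, $w$ is exactly the derivative of the generalized cylinder twist $u_{\tau}^{\mathcal{C}}$. This twist fixes all absolute periods (as $w \in \ker p$) and stays in $\mathcal{M}$ (as $w \in T_{\mathbb{R}}(\mathcal{M})$ and $\mathcal{M}$ is linear in period coordinates), giving the desired REL twist along the linear path $s \mapsto \Phi(M) + s w$.

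For the stretch I would use that $T_{\mathbb{C}}(\mathcal{M}) = \mathbb{C} \otimes T_{\mathbb{R}}(\mathcal{M})$ and that $p$ is defined over $\mathbb{R}$, so $i w \in T_{\mathbb{C}}(\mathcal{M}) \cap \ker p$. Cylinder by cylinder, the height-changing deformation has derivative equal to $i$ times that of the horizontal shear, so the same parameters $\tau$ realize $i w$ as a generalized cylinder stretch $a_{\tau}^{\mathcal{C}}$; concretely I would vary the heights linearly so that the associated path is $s \mapsto \Phi(M) + s\, i w$, which remains in $\mathcal{M}$ and again fixes all absolute periods. This path persists precisely as long as its image lies in a single period-coordinate chart, i.e. as long as every cylinder height stays positive (the cylinders persist) and no saddle connection degenerates (the zeros do not collapse), which is the asserted continuation statement.

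The step I expect to be the real obstacle, and the conceptual heart of the argument, is the middle one: arranging that the abstract REL direction is simultaneously a cylinder twist. The inclusion $\ker p \subseteq \text{Pres}(M, \mathcal{M})$ is immediate, but it becomes useful only after the maximality of the cylinder count collapses $\text{Pres}$ onto $\text{Twist}$ via Lemma \ref{WrightLem86}; identifying this maximal surface and checking that a positive-dimensional REL subspace survives on it is where the work concentrates. The remaining points (linearity of the twist path, the $i$-rotation producing the stretch, and the degeneration-based continuation criterion) are then formal, paralleling the argument in \cite[Pf. of Lem. 3.1]{LanneauNguyenWrightFinInNonArithRkOne}.
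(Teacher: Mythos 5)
Your proposal is correct, and its skeleton coincides with the paper's: both pass, via Lemma \ref{WrightLem86} and the bound on the number of cylinders, to a horizontally periodic $M \in \mathcal{M}$ with $\text{Twist}(M, \mathcal{M}) = \text{Pres}(M, \mathcal{M})$, and then locate a REL class inside the twist space. Where you genuinely diverge is the middle step. The paper cites \cite[Lem.~8.8, Cor.~8.12]{WrightCylDef} to get $\dim \text{Twist}(M, \mathcal{M}) = \dim T_{\mathbb{R}}(\mathcal{M}) - 1$ and then runs a dimension count to conclude $\text{Twist}(M, \mathcal{M}) \cap \ker(p) \neq 0$; you instead use the tautological inclusion $\ker p|_{T_{\mathbb{R}}(\mathcal{M})} \subseteq \text{Pres}(M, \mathcal{M})$ (a REL class pairs to zero with every absolute cycle, hence with all core curves), so that the equality $\text{Twist} = \text{Pres}$ immediately places the \emph{entire} real REL subspace inside the twist space. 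This is more elementary --- it needs neither Lem.~8.8 nor Cor.~8.12 of \cite{WrightCylDef} --- and it yields slightly more than the paper's count does, namely that every REL direction at $M$, not merely some nonzero one, is realized as a generalized cylinder twist; your identification of twist classes with combinations of cylinder shears is the same fact the paper itself invokes (Wright's Cor.~8.3) in the proof of Corollary \ref{CoreCurvesInd}. You also make explicit two points the paper's proof leaves tacit: the stretch arises from $iw \in T_{\mathbb{C}}(\mathcal{M}) \cap \ker p$ via $T_{\mathbb{C}}(\mathcal{M}) = \mathbb{C} \otimes T_{\mathbb{R}}(\mathcal{M})$, and the continuation criterion holds because $w$ vanishes on all horizontal saddle connections, so those are frozen along the path and the decomposition survives exactly until a height vanishes or zeros collide. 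One cosmetic correction: continuation is not limited to ``a single period-coordinate chart'' --- the linear path extends across overlapping charts because $\mathcal{M}$ is closed and locally linear with locally constant tangent space, which is precisely what the theorem's ``as long as the cylinders persist and the zeros do not collapse'' expresses.
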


\begin{proof}
By Lemma \ref{WrightLem86}, every affine manifold contains a horizontally periodic translation surface $M$ such that $\text{Twist}(M, \mathcal{M}) = \text{Pres}(M, \mathcal{M})$.  By \cite[Lem. 8.8, Cor. 8.12]{WrightCylDef}, $\dim \text{Twist}(M, \mathcal{M}) = \dim T_{\mathbb{R}}( \mathcal{M}) - 1$.  Since $\dim \text{Twist}(M, \mathcal{M}) \geq 2$, $\dim \text{Twist}(M, \mathcal{M}) \cap \ker(p) \geq \dim T_{\mathbb{R}}( \mathcal{M}) - 2$.  Furthermore, $\dim T_{\mathbb{R}}( \mathcal{M}) - 2 > 0$ because $\mathcal{M}$ has non-trivial REL.  Hence, there exists a non-trivial element $\eta \in \text{Twist}(M, \mathcal{M}) \cap \ker(p)$, which by definition is a twist of the cylinders that fixes absolute periods.
\end{proof}

\begin{corollary}
\label{CoreCurvesInd}
Given a rank one orbit closure $\mathcal{M}$ with non-trivial REL, there exists a horizontally periodic translation surface $M$ decomposing into cylinders with core curves $\Gamma = \{\gamma_1, \ldots, \gamma_r\} \subset H_1(X, \mathbb{Z})$ such that $\Gamma$ forms a \emph{linearly dependent} set in $H_1(X, \mathbb{R})$.  Moreover, if the core curves of the cylinders do form a \emph{linearly independent} subset of $H_1(X, \mathbb{Z})$, then $\text{Twist}(M, \mathcal{M}) \not= \text{Pres}(M, \mathcal{M})$.
\end{corollary}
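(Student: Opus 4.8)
The plan is to derive both assertions from the dimension count already performed in the proof of Theorem \ref{RELDefs}, by translating the existence of a non-trivial REL twist into a linear relation among the core curves. The key dictionary is the standard description of the projection $p$ on twist classes: if $v_i \in H^1(X, \Sigma, \mathbb{R})$ denotes the cohomology class of the twist supported on the horizontal cylinder $C_i$, then $p(v_i)$ is a nonzero multiple of the Poincar\'e dual $\mathrm{PD}(\gamma_i)$ of the core curve $\gamma_i$, and the classes $\{v_1, \ldots, v_r\}$ are linearly independent since the full space of classes vanishing on all horizontal saddle connections has dimension equal to the number of cylinders $r$. Once this dictionary is fixed, the non-triviality of a REL twist transfers directly to a non-trivial relation among the $\mathrm{PD}(\gamma_i)$, hence among the $\gamma_i$ by Poincar\'e duality.

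For the first assertion, I would take the surface $M$ and the non-trivial class $\eta \in \text{Twist}(M, \mathcal{M}) \cap \ker(p)$ furnished by Theorem \ref{RELDefs}. Writing $\eta = \sum_i t_i v_i$, linear independence of the $v_i$ forces the $t_i$ to not all vanish, while $\eta \in \ker(p)$ gives $\sum_i t_i\, p(v_i) = 0$, i.e. a non-trivial relation among the $\mathrm{PD}(\gamma_i)$. Dualizing yields a non-trivial relation $\sum_i s_i\, \gamma_i = 0$ in $H_1(X, \mathbb{R})$ with coefficients $s_i$ not all zero, so $\Gamma = \{\gamma_1, \ldots, \gamma_r\}$ is linearly dependent.

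For the second, conditional assertion I would argue the contrapositive: assuming $\text{Twist}(M, \mathcal{M}) = \text{Pres}(M, \mathcal{M})$, I must produce a linear dependence among the core curves. Under this equality, together with $\text{Rank}(\mathcal{M}) = 1$, the cited results \cite[Lem. 8.8, Cor. 8.12]{WrightCylDef} give $\dim \text{Twist}(M, \mathcal{M}) = \dim T_{\mathbb{R}}(\mathcal{M}) - 1$. Since the horizontal core curves are pairwise non-intersecting, $p(\text{Twist}(M, \mathcal{M}))$ is an isotropic subspace of the $2$-dimensional symplectic space $p(T_{\mathbb{R}}(\mathcal{M}))$, hence at most one-dimensional, so $\dim ( \text{Twist}(M, \mathcal{M}) \cap \ker p ) \geq \dim T_{\mathbb{R}}(\mathcal{M}) - 2$, which is strictly positive because $\mathcal{M}$ has non-trivial REL. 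This again yields a non-trivial $\eta \in \text{Twist}(M, \mathcal{M}) \cap \ker p$, and the computation of the previous paragraph gives the desired dependence; taking the contrapositive proves that linear independence of the core curves forces $\text{Twist}(M, \mathcal{M}) \neq \text{Pres}(M, \mathcal{M})$.

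The combinatorics are routine once the dictionary between twists and core curves is in place. I expect the only delicate point to be the justification that $p$ sends $v_i$ to a \emph{nonzero} multiple of $\mathrm{PD}(\gamma_i)$ and that the $v_i$ are independent, so that the non-triviality of $\eta$ is genuinely transferred to a non-trivial relation among the $\gamma_i$ rather than being absorbed into $\ker p$; a secondary point is verifying that the isotropy bound $\dim p(\text{Twist}(M, \mathcal{M})) \le \text{Rank}(\mathcal{M})$ is available under the hypothesis $\text{Twist} = \text{Pres}$. Both are standard consequences of Wright's framework, and everything else is a direct re-application of the argument in Theorem \ref{RELDefs}.
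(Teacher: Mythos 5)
Your proposal is correct and follows essentially the same route as the paper: both arguments run the dimension count from the proof of Theorem \ref{RELDefs} (with the isotropy bound $\dim p(\text{Twist}(M,\mathcal{M})) \le 1$, which you actually spell out more carefully than the paper does) to produce a non-trivial element of $\text{Twist}(M, \mathcal{M}) \cap \ker(p)$, and then convert it into a linear dependence among the core curves via Poincar\'e duality. The only cosmetic difference is that where the paper invokes \cite[Cor.~8.3]{WrightCylDef} to identify $\text{Twist}(M, \mathcal{M})$ with $T_{\mathbb{R}}^M(\mathcal{M}) \cap \text{span}_{\mathbb{R}}(\eta_{\gamma_i})$, you expand $\eta$ in the standard basis of cylinder twist classes $v_i$ with $p(v_i)$ a nonzero multiple of the dual of $\gamma_i$ --- an equivalent formulation of the same dictionary.
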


\begin{proof}
Let $\eta_{\gamma_i} \in H^1(X, \bZ)$ denote the cocycle dual to $\gamma_i$.  By contradiction, assume that $\Gamma$ is linearly independent and $\text{Twist}(M, \mathcal{M}) = \text{Pres}(M, \mathcal{M})$.  Then as we observed in the proof of the previous theorem, $\text{Twist}(M, \mathcal{M}) \cap \ker(p) \not= 0$.  Furthermore, \cite[Cor. 8.3]{WrightCylDef} implies that $\text{Twist}(M, \mathcal{M}) = T_{\bR}^M(\cM) \cap \text{span}_{\bR}(\eta_{\gamma_i})_{i=1}^r$.  By assumption and Poincar\'e duality, $\{\eta_{\gamma_1}, \ldots, \eta_{\gamma_r}\}$ is linearly independent in $H^1(X,\bZ)$.  Thus, $\ker p(\text{Twist}(M, \mathcal{M})) = 0$, and the contradiction proves the result.
\end{proof}

We conclude this section with an example of the results proven above.  The results can already be seen in the principal stratum in genus two.  The following example explicitly describes the REL twist and REL stretch for such a surface.

\begin{example}
By Corollary \ref{CoreCurvesInd}, every rank one orbit closure with non-trivial REL contains a horizontally periodic translation surface with cylinders whose core curves are homologically dependent.  It is easy to see in $\mathcal{H}(1,1)$ that the core curves of the cylinders of every $2$-cylinder diagram are homologically independent.  Therefore, every such orbit closure must contain a translation surface $M$ with three cylinders, the maximum number of cylinders possible in genus two.

Theorem \ref{RELDefs} guarantees that there is a REL twist and a REL stretch that can be performed on $M$.  Each of these deformations are depicted in Figure \ref{Gen2RELDefsExFig}.  We leave it to the reader to check that the absolute periods are indeed fixed and that each of these deformations are as claimed.
\end{example}

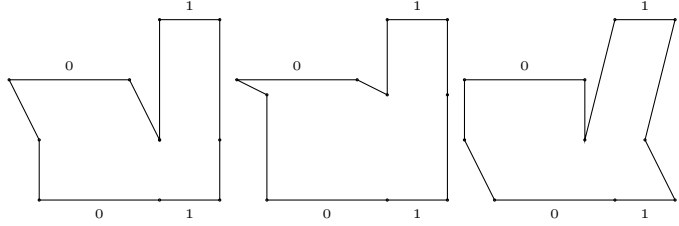
\begin{figure}
\centering
\begin{minipage}{0.24\linewidth}
\centering
\begin{tikzpicture}[scale=0.40]
\draw (0,0)--(0,2)--(-1,4)--(3,4)--(4,2)--(4,6)--(6,6)--(6,0)--cycle;
\draw(1,4) node[above] {\tiny $0$};
\draw(5,6) node[above] {\tiny $1$};
\draw(2,0) node[below] {\tiny $0$};
\draw(5,0) node[below] {\tiny $1$};
\foreach \x in {(0,2),(4,2),(6,2)} \draw \x circle (1pt);
\foreach \x in {(0,0),(-1,4),(3,4),(4,2),(4,6),(6,6),(6,0),(4,0)} \draw \x circle (1pt);
\end{tikzpicture}
\end{minipage}
\begin{minipage}{0.24\linewidth}
\centering
\begin{tikzpicture}[scale=0.40]
\draw (0,0)--(0,3.5)--(-1,4)--(3,4)--(4,3.5)--(4,6)--(6,6)--(6,0)--cycle;
\draw(1,4) node[above] {\tiny $0$};
\draw(5,6) node[above] {\tiny $1$};
\draw(2,0) node[below] {\tiny $0$};
\draw(5,0) node[below] {\tiny $1$};
\foreach \x in {(0,3.5),(4,3.5),(6,3.5)} \draw \x circle (1pt);
\foreach \x in {(0,0),(-1,4),(3,4),(4,6),(6,6),(6,0),(4,0)} \draw \x circle (1pt);
\end{tikzpicture}
\end{minipage}
\begin{minipage}{0.24\linewidth}
\centering
\begin{tikzpicture}[scale=0.40]
\draw (0,0)--(-1,2)--(-1,4)--(3,4)--(3,2)--(4,6)--(6,6)--(5,2)--(6,0)--cycle;
\draw(1,4) node[above] {\tiny $0$};
\draw(5,6) node[above] {\tiny $1$};
\draw(2,0) node[below] {\tiny $0$};
\draw(5,0) node[below] {\tiny $1$};
\foreach \x in {(-1,2),(3,2),(5,2)} \draw \x circle (1pt);
\foreach \x in {(0,0),(-1,4),(3,4),(4,6),(6,6),(6,0),(4,0)} \draw \x circle (1pt);
\end{tikzpicture}
\end{minipage}
 \caption{Examples of a REL stretch (center) and a REL twist (right) of a genus two surface (left)}
 \label{Gen2RELDefsExFig}
\end{figure}

\section{Rank One Affine Manifolds in $\Omega\cM_3$}
\label{Rank1PrinStrSect}

Throughout this section we will assume that the rank of all affine manifolds is one and the Forni subspace is $2$-dimensional.  We will also assume that the rank one affine manifolds have non-trivial REL so as to exclude the case of Teichm\"uller curves.  The goal of this section is to prove Theorem \ref{Rank1PrinStratum}.

\subsection{Cylinder Configurations}

In this section, we introduce the term cylinder configuration, and explain how Forni's Geometric Criterion implies that every periodic translation surface in any genus three orbit closure with non-trivial Forni subspace must satisfy one of very few cylinder configurations.  Cylinder configurations have also proven to be useful in dividing cases in \cite{AulicinoNguyenGen3TwoZeros}.

Given a decomposition of a surface into cylinders, which always occurs in the rank one case by Theorem \ref{QuadImpCP}, the topological data consisting of the horizontal cylinders and the saddle connections in their boundaries is called a \emph{cylinder diagram}.

We consider the following degeneration of a periodic translation surface $M$ that will be very helpful in the discussion that follows.  Consider the degeneration from pinching the core curves of every cylinder.  This is in fact a boundary point of every affine manifold containing $M$ with the Deligne-Mumford compactification by \cite{MasurThesis}.  To see this let $G_t$ be the Teichm\"uller geodesic flow and observe that by \cite[Thm. 3]{MasurThesis},
$$\lim_{t \rightarrow \infty} G_t\cdot M$$
converges to exactly the nodal Riemann surface resulting from pinching the core curves of every cylinder.  Furthermore, every node of the Riemann surface corresponds to a pair of simple poles of an Abelian differential (with opposite residue on each side of the node).  After removing the nodes from this surface, we get a disjoint union of Riemann surfaces with finitely many pairs of punctures where each pair of punctures corresponds to the removal of a node.  A connected component of a degenerate surface is called a \emph{part}.  For example, a surface in genus three with two non-homologous cylinders would degenerate in this sense to a torus with two pairs of simple poles, and this is depicted in Table \ref{MainConfigTable} Configuration 1) Column 2.  On the other hand, a surface in genus three decomposing into two homologous cylinders would degenerate to two tori joined by two pairs of simple poles as in Table \ref{MainConfigTable} Configuration 6) Column 2.

One very useful relation to recall for any meromorphic differential on a genus $g$ Riemann surface is
$$\sharp (\text{Zeros}) - \sharp (\text{Poles}) = 2g-2,$$
where $\sharp(\cdot)$ denotes the total order of the elements in the set.

Another key observation is that it is easy to compute the homological dimension of the span of the core curves of the cylinders from the degenerate surface.  Let $g$ be the genus of the non-degenerate surface and let $g_i$ be the genus of the $i$'th part of the degenerate surface.  The homological dimension of the span of the core curves of the cylinders is given by
$$g - \sum_i g_i.$$
For example, in Configuration 6), the homological dimension of the two (homologous) cylinders is certainly $1 = 3-(1+1)$.

\begin{lemma}
\label{Gen3TopConfigs}
Let $M \in \mathcal{M} \subset \Omega\mathcal{M}_3$ be horizontally periodic.  If $\dim F = 2$, then pinching the core curves of every horizontal cylinder on $M$ yields one of the following six degenerate surfaces (up to homeomorphism) pictured in Column 2 of Table \ref{MainConfigTable}.
\end{lemma}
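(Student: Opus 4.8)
The plan is to read off the nodal curve produced by pinching all horizontal core curves, list the finitely many numerical invariants such a degeneration can have, and then apply Forni's Geometric Criterion together with the hypothesis $\dim F = 2$ to reduce that list to six homeomorphism types. First I would fix notation for the degenerate surface: by the discussion preceding the lemma and by \cite{MasurThesis}, pinching the $r$ core curves yields a genuine boundary point, a connected nodal curve with parts $X_1,\dots,X_m$ of genera $g_1,\dots,g_m$ carrying meromorphic Abelian differentials $\omega_i$ with simple poles at the nodes. Three numerical constraints must then hold. The arithmetic genus identity gives $\sum_i g_i = 2 + m - r$, equivalently the homological dimension of the span of the core curves is $d = 1 + r - m = 3 - \sum_i g_i$, matching the formula $g - \sum_i g_i$ recorded in the text. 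The zero--pole relation $\sharp(\text{zeros of }\omega_i) - \sharp(\text{poles of }\omega_i) = 2g_i - 2$ holds on each part, with the four zeros of $\omega$ distributed among the parts and exactly $2r$ simple poles in total, two per node. Finally, the residues must sum to zero on each part and occur in opposite pairs across each node, reflecting that the degeneration comes from an honest cylinder decomposition; in particular each genus-zero part must carry at least two poles.

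Next I would convert $\dim F = 2$ into a constraint on the parts. Because the Forni subspace is the maximal subspace on which the Kontsevich--Zorich cocycle acts by Hodge isometries, and the expansion of the cocycle in this degeneration lives on the vanishing-cycle part of cohomology spanned by the core curves, the Forni subspace is carried by $\bigoplus_i H^1(X_i,\mathbb{R})$; hence $\dim F \le 2\sum_i g_i$, which forces $\sum_i g_i \ge 1$ and $d \le 2$. The genus-one (torus) parts are the carriers of $F$. Since the four-dimensional, maximal Forni case has already been classified in \cite{AulicinoCompDegKZ, AulicinoCompDegKZAIS}, it remains to isolate the configurations producing a Forni subspace of dimension exactly two. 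Here Forni's Geometric Criterion does the decisive work: it characterizes membership in the determinant locus directly from the cylinder decomposition, so that for each candidate dual graph and each distribution of genera and zeros it decides whether an isometric subspace of dimension exactly two is produced, thereby pinning down how many torus parts may occur and how the cylinders must attach to them.

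With these constraints in place the remainder is a finite enumeration of the homeomorphism types, which I would organize by the value of $\sum_i g_i$. The two examples already in the text are the extreme cases: a single torus with two self-nodes ($\sum_i g_i = 1$, $d = 2$, Configuration 1) and two tori joined along two nodes ($\sum_i g_i = 2$, $d = 1$, Configuration 6). In each regime I would run over the admissible dual graphs with first Betti number $d$, over the placements of the four zeros, and over the distributions of poles, discarding those for which no meromorphic differential with matching residues exists, those containing a sphere with fewer than two poles, and those for which Forni's criterion yields $\dim F \ne 2$. The configurations that survive are exactly the six displayed in Column 2 of Table \ref{MainConfigTable}.

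The main obstacle I anticipate is the interface between the second and third steps: formulating Forni's Geometric Criterion in a form adapted to this degeneration that simultaneously certifies $\dim F = 2$ for the surviving surfaces and rules out every combinatorially admissible degeneration with $\dim F \in \{0,4\}$, and then verifying that the sweep over dual graphs, genus distributions, and zero placements is genuinely exhaustive and free of repetition up to homeomorphism. The numerical bookkeeping is routine, but guaranteeing completeness of the list --- that no admissible degenerate surface is overlooked and that the Forni constraint is correctly applied in every branch --- is where the real care is needed.
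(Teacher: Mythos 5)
Your overall architecture matches the paper's: use the hypothesis on $F$ to bound the homological dimension $d$ of the span of the core curves by two, then enumerate the admissible nodal surfaces (the paper organizes this by the number of sphere parts rather than by dual graphs, but that is the same bookkeeping). However, your enumeration has a genuine gap in its constraints: you propose to discard only those degenerations ``containing a sphere with fewer than two poles,'' whereas the correct threshold is \emph{three}. Every part of the degenerate surface is built from half-infinite cylinders glued along horizontal saddle connections, so every part contains at least one zero of $\omega$; on a sphere part the relation $\sharp(\text{Zeros}) - \sharp(\text{Poles}) = -2$ then forces at least three simple poles. A sphere with exactly two simple poles and no zeros is an unpunctured flat cylinder, which cannot arise here: it would mean two pinched core curves bound a singularity-free cylinder between them, contradicting maximality of the original horizontal cylinders. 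With your weaker filter the list does not reduce to six: for instance, a torus with one self-node joined to a two-pole sphere by two further nodes has $m = 2$ parts, $r = 3$ nodes, $d = r - m + 1 = 2$, genus $1 + 2 = 3$, and admissible residues (the sphere carries $c\,dz/z$), so it survives your sweep yet is not homeomorphic to any of the six types --- it is the mirror of Configuration 3), where the self-node sits on the sphere (which then has four poles). The three-pole bound is also what kills the ``three spheres plus a torus'' case in the paper, via $2r \geq 11$ and hence $d = r - 3 \geq 3 > 2$; your count does not rule it out. (A smaller slip: you fix ``four zeros,'' but the lemma is stated for all of $\Omega\cM_3$, so only the total order of zeros is four.)

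Separately, you mischaracterize Forni's Geometric Criterion and consequently build an unnecessary and unworkable step into your plan. The criterion, as used in the paper, is a one-way upper bound: if the core curves of a horizontally periodic surface in $\cM$ span a $d$-dimensional subspace of homology, then $\dim F \leq 2(g-d)$; in particular nontriviality of $F$ forces $d \leq 2$, and that is the \emph{only} use of the hypothesis in the paper's proof (together with $d \geq 1$, since core curves are never null-homologous). The criterion cannot ``decide whether an isometric subspace of dimension exactly two is produced,'' and the lemma requires no such certification: the implication runs only from $\dim F = 2$ to the list of degenerations, not conversely --- indeed, whether any surface in the principal stratum beyond the known examples actually realizes $\dim F = 2$ is posed as an open question in the introduction. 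So your filter ``discard those for which Forni's criterion yields $\dim F \neq 2$'' should be replaced by the single constraint $d \leq 2$, and your heuristic claim that $F$ is carried by $\bigoplus_i H^1(X_i,\mathbb{R})$ in the degeneration, while morally the content of the criterion, is unproven as stated; the clean fix is simply to cite \cite[Thm.~4]{ForniCriterion} on the nondegenerate surface $M$ itself, as the paper does.
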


\begin{proof}
Let $M'$ denote the degenerate surface resulting from pinching the core curve of every horizontal cylinder on $M$.  By the Forni Geometric Criterion \cite[Thm. 4]{ForniCriterion}, the homological dimension of the core curves of the cylinders is bounded above by two because the Forni subspace is nontrivial, by assumption.  First, assume that the core curves of the cylinders represent a $2$-dimensional subspace of homology.  After pinching the core curves of the cylinders and removing the resulting nodes, the degenerate surface must have total genus one.  Therefore, $M'$ is a torus and a (possibly empty) collection of spheres.

If there are no spheres attached to the torus, then we get Configuration 1) in Table \ref{MainConfigTable}.

If there is one sphere attached to the torus, there must be at least two cylinders joining the two parts because the core curve of a cylinder of an Abelian differential is never homologous to zero, and therefore, we get Configurations 2) and 3) in Table \ref{MainConfigTable}.

If the degenerate surface consists of two spheres, $S_1$, $S_2$, and a torus, then we know that each sphere carries at least three simple poles, which correspond to three half-infinite cylinders.  The torus itself must have at least two simple poles by the observation above that the core curve of a cylinder for an Abelian differential is never a separating curve.  We consider which parts the nodes must connect in this picture.  First assume by contradiction that the two nodes on the torus are connected to one of the two spheres, say $S_1$.  Then any way of connecting the sphere $S_2$ to the rest of the nodal surface by three nodes will force the genus of $M$ to be strictly greater than three.  This leaves the case of the torus being connected to each of the spheres by a node.  As noted above, $M'$ has at least two more nodes on each sphere and unless they lie between $S_1$ and $S_2$, then $M$ will have genus greater than three.  Thus, Configuration 4) is the only degenerate surface that satisfies this.

It is impossible to have three spheres attached to the torus in genus three.  All three spheres must have at least three or more half cylinders, and the torus, as before, must have at least two half cylinders.  However, genus three surfaces have at most six cylinders, and in that case the core curves of the cylinders span a Lagrangian subspace of homology.  Hence, $M'$ cannot have three spheres.

If the homological dimension of the cylinders is one, then there are only two possibilities as depicted in 5) and 6).  They correspond to a single cylinder and two homologous cylinders, respectively.  It is impossible to have three homologous cylinders on a genus three surface.
\end{proof}

\begin{definition}
Two cylinder diagrams are \emph{equivalent} if after pinching the core curves of every cylinder in both diagrams, the resulting degenerate surfaces are homeomorphic.  Define a \emph{cylinder configuration} to be an equivalence class of cylinder diagrams under this definition of equivalence.
\end{definition}

By definition every degenerate surface gives rise to a cylinder configuration, and these were completely described in Lemma \ref{Gen3TopConfigs}.  Obviously, each cylinder configuration imposes significant restrictions on the cylinder diagrams contained in it.  Below we make some general observations about every cylinder diagram contained in each cylinder configuration.  In Column 3 of Table \ref{MainConfigTable}, we provide figures of cylinder diagrams corresponding to the observations we make below.  In the figures, $\cE$ denotes a collection of saddle connections, which in some cases (noted below) are permitted to be the empty set.  For certain cylinder configurations it will be necessary for our arguments to list every cylinder diagram in that cylinder configuration.  However, we postpone this to later sections.

\

\noindent \textit{Configuration 1}:  There are two non-homologous cylinders in this configuration and at least one cylinder has a saddle connection contained in both its top and bottom.  In the figure, the collection of saddle connections $\mathcal{E}_1$ is always non-empty after renumbering.  However, we permit $\mathcal{E}_3$ to be the empty set.

\

\noindent \textit{Configuration 2}: There are three cylinders in this configuration and since one of the parts of the degenerate surface is a sphere with three poles, there must be exactly one simple zero on that part.  Moreover, there is exactly one way to identify three (half) cylinders so that they form a sphere with a simple zero.  The tops of $C_1$ and $C_2$ must be identified to the bottom of $C_3$.

\

\noindent \textit{Configuration 3}: There are exactly three cylinders in this configuration.  Since one of the parts is a torus with two poles, the two cylinders, say $C_2$ and $C_3$, incident with the torus are homologous.  In particular, the circumferences of $C_2$ and $C_3$ are equal.  Furthermore, the top of one of these cylinders, say $C_3$, must be entirely identified to the bottom of the other cylinder $C_2$ in order for the degeneration by pinching core curves to form a torus.  The third cylinder $C_1$ is identified to $C_2$ and $C_3$ in some manner that will be addressed later, cf. Lemma \ref{CylDiags3}.  The figure in Table \ref{MainConfigTable} only depicts one of three very different identifications!

\

\noindent \textit{Configuration 4}: This configuration consists of four cylinders and the justification for the arrangement is a combination of the arguments above.  The torus again must be formed by identifying the top of one cylinder, say $C_2$, to the bottom of another cylinder, say $C_1$.  All other identifications are among three cylinders that identify along a simple zero.  There is exactly one possible identification among a choice of three cylinders as noted in Configuration 2) above.  However, there are two possible cylinder diagrams resulting from the two possible choices in this case, cf. Figure \ref{Config4CylDiagFig}, and this will be rigorously addressed in Lemma \ref{CylDiags4}.

\

\noindent \textit{Configuration 5}: Obvious.

\

\noindent \textit{Configuration 6}: Since $C_1$ and $C_2$ are homologous cylinders, the top of the $C_1$ must be entirely identified with the bottom of $C_2$ and vice versa.  Likewise, this must be true for the bottom of $C_1$ and the top of $C_2$.  If not, the core curves of each of $C_1$ and $C_2$ would not be homologous.\footnote{In fact, there is a unique cylinder diagram in this case in $\cH(1,1,1,1)$, and every periodic direction on the Eierlegende Wollmilchsau satisfies it.}

\begin{table}
$$\begin{array}{c|c|c}
\text{Configuration} & \text{Degeneration Figure} & \text{Cylinder Configuration} \\
\hline
1) & \begin{minipage}[c]{0.24\linewidth}
\centering
\begin{tikzpicture}[thick,scale=0.5]
\draw[thick] (0,0) circle(2.5);
\draw plot [smooth] coordinates {(-2,.2) (-1.5,-.2) (-1,.2)};
\draw plot [smooth] coordinates {(-1.75,0) (-1.5,.2) (-1.25,0)};
\draw {(-1.2,.9) node{-1}[circle, draw, fill=black,inner sep=0pt, minimum width=5pt] -- (1.2,.9) node{-1}[circle, draw, fill=black!50,inner sep=0pt, minimum width=5pt]};
\draw {(-1.2,-.9) node{-1}[circle, draw, fill=black,inner sep=0pt, minimum width=5pt] -- (1.2,-.9) node{-1}[circle, draw, fill=black!50,inner sep=0pt, minimum width=5pt]};
\end{tikzpicture}
\end{minipage} & \begin{minipage}[c]{0.24\linewidth}
\centering
\begin{tikzpicture}[scale=0.30]
\draw (0,0)--(0,2)--(-3,2)--(-3,4)--(4,4)--(4,2)--(6,2)--(6,0)--cycle;
\draw(-1,4) node[above] {\tiny $\mathcal{E}_1$};
\draw(3,4) node[above] {\tiny $\mathcal{E}_2$};
\draw(5,2) node[above] {\tiny $\mathcal{E}_3$};
\draw(-2,2) node[below] {\tiny $\mathcal{E}_1$};
\draw(1,0) node[below] {\tiny $\mathcal{E}_2$};
\draw(4,0) node[below] {\tiny $\mathcal{E}_3$};
\end{tikzpicture}
\end{minipage} \\
2) & \begin{minipage}[c]{0.24\linewidth}
\centering
\begin{tikzpicture}[thick,scale=0.4]
\draw[thick] (2,0) circle(1.5);
\draw[thick] (-2,0) circle(1.5);
\draw plot [smooth] coordinates {(2.5,.2) (2,-.2) (1.5,.2)};
\draw plot [smooth] coordinates {(2.25,0) (2,.2) (1.75,0)};
\draw {(-1.2,.9) node{-1}[circle, draw, fill=black,inner sep=0pt, minimum width=5pt] -- (1.2,.9) node{-1}[circle, draw, fill=black!50,inner sep=0pt, minimum width=5pt]};
\draw {(-1.2,0) node{-1}[circle, draw, fill=black,inner sep=0pt, minimum width=5pt] -- (1.2,0) node{-1}[circle, draw, fill=black!50,inner sep=0pt, minimum width=5pt]};
\draw {(-1.2,-.9) node{-1}[circle, draw, fill=black,inner sep=0pt, minimum width=5pt] -- (1.2,-.9) node{-1}[circle, draw, fill=black!50,inner sep=0pt, minimum width=5pt]};
\end{tikzpicture}
\end{minipage} & \begin{minipage}[c]{0.24\linewidth}
\centering
\begin{tikzpicture}[scale=0.30]
\draw (0,0)--(0,4)--(1.9,4)--(2,2)--(2.1,4)--(5,4)--(5,0)--cycle;
\foreach \x in {(0,2),(2,2),(5,2)} \draw \x circle (1pt);
\draw(1,3) node {\tiny $C_1$};
\draw(3.5,3) node {\tiny $C_2$};
\draw(2.5,1) node {\tiny $C_3$};
\draw(1,4) node[above] {\tiny $\mathcal{E}_1$};
\draw(4,4) node[above] {\tiny $\mathcal{E}_2$};
\draw(1,0) node[below] {\tiny $\mathcal{E}_1$};
\draw(4,0) node[below] {\tiny $\mathcal{E}_2$};
\end{tikzpicture}
\end{minipage}  \\
3) & \begin{minipage}[c]{0.24\linewidth}
\centering
\begin{tikzpicture}[thick,scale=0.4]
\draw[thick] (2,0) circle(1.5);
\draw[thick] (-2,0) circle(1.5);
\draw plot [smooth] coordinates {(2.5,.2) (2,-.2) (1.5,.2)};
\draw plot [smooth] coordinates {(2.25,0) (2,.2) (1.75,0)};
\draw {(-3,-.5) node{-1}[circle, draw, fill=black,inner sep=0pt, minimum width=5pt] -- (-3,.5) node{-1}[circle, draw, fill=black!50,inner sep=0pt, minimum width=5pt]};
\draw {(-1.2,.9) node{-1}[circle, draw, fill=black,inner sep=0pt, minimum width=5pt] -- (1.2,.9) node{-1}[circle, draw, fill=black!50,inner sep=0pt, minimum width=5pt]};
\draw {(-1.2,-.9) node{-1}[circle, draw, fill=black,inner sep=0pt, minimum width=5pt] -- (1.2,-.9) node{-1}[circle, draw, fill=black!50,inner sep=0pt, minimum width=5pt]};
\end{tikzpicture}
\end{minipage} & \begin{minipage}[c]{0.24\linewidth}
\centering
\begin{tikzpicture}[scale=0.30]
\draw (0,0)--(0,2)--(-2,2)--(-2,4)--(4,4)--(4,2)--(6,2)--(6,0)--(6,-2)--(4,-2)--(4,0)--cycle;
\draw(5,-1) node {\tiny $C_1$};
\draw(1,3) node {\tiny $C_2$};
\draw(3,1) node {\tiny $C_3$};
\draw(-1,4) node[above] {\tiny $\mathcal{E}_2$};
\draw(3,4) node[above] {\tiny $\mathcal{E}_3$};
\draw(5,2) node[above] {\tiny $\mathcal{E}_1$};
\draw(-1,2) node[below] {\tiny $\mathcal{E}_1$};
\draw(1,0) node[below] {\tiny $\mathcal{E}_2$};
\draw(5,-2) node[below] {\tiny $\mathcal{E}_3$};
\end{tikzpicture}
\end{minipage} \\
4) & \begin{minipage}[c]{0.24\linewidth}
\centering
\begin{tikzpicture}[thick,scale=0.25]
\draw[thick] (-4,0) circle(1.5);
\draw[thick] (0,0) circle(1.5);
\draw[thick] (4,0) circle(1.5);
\draw plot [smooth] coordinates {(-.5,.2) (0,-.2) (.5,.2)};
\draw plot [smooth] coordinates {(-.25,0) (0,.2) (.25,0)};
\draw  {(-4,.9) node{-1}[circle, draw, fill=black,inner sep=0pt, minimum width=5pt] -- (0,2)--(4,.9) node{-1}[circle, draw, fill=black!50,inner sep=0pt, minimum width=5pt]}[smooth];
\draw [smooth] {(-4,-.9) node{-1}[circle, draw, fill=black,inner sep=0pt, minimum width=5pt] -- (0,-2)--(4,-.9) node{-1}[circle, draw, fill=black!50,inner sep=0pt, minimum width=5pt]};
\draw {(-3,0) node{-1}[circle, draw, fill=black,inner sep=0pt, minimum width=5pt] -- (-1,0) node{-1}[circle, draw, fill=black!50,inner sep=0pt, minimum width=5pt]};
\draw {(1,0) node{-1}[circle, draw, fill=black,inner sep=0pt, minimum width=5pt] -- (3,0) node{-1}[circle, draw, fill=black!50,inner sep=0pt, minimum width=5pt]};
\end{tikzpicture}
\end{minipage}  & \begin{minipage}[c]{0.24\linewidth}
\centering
\begin{tikzpicture}[scale=0.30]
\draw (0,0)--(0,4)--(1.9,4)--(2,2)--(2.1,4)--(5,4)--(5,0)--(8,0)--(8,-2)--(3,-2)--(3,0)--cycle;
\foreach \x in {(0,2),(2,2),(5,2),(3,-2),(6,-2),(8,-2)} \draw \x circle (1pt);
\draw(1,3) node {\tiny $C_1$};
\draw(3.5,3) node {\tiny $C_2$};
\draw(2.5,1) node {\tiny $C_3$};
\draw(5.5,-1) node {\tiny $C_4$};
\draw(1,4) node[above] {\tiny 0};
\draw(3.5,4) node[above] {\tiny 1};
\draw(4.5,-2) node[below] {\tiny 1};
\draw(7,-2) node[below] {\tiny 0};
\draw(7,0) node[above] {\tiny $\mathcal{E}_1$};
\draw(1,0) node[below] {\tiny $\mathcal{E}_1$};
\end{tikzpicture}
\end{minipage} \\
5) & \begin{minipage}[c]{0.24\linewidth}
\centering
\begin{tikzpicture}[thick,scale=0.50]
\draw plot [smooth cycle] coordinates {(-2,0) (-1,2) (0,1) (1,2) (4,0) (1,-2) (0,-1) (-1,-2)};
\draw plot [smooth] coordinates {(-1.5,.2) (-1,-.2) (-.5,.2)};
\draw plot [smooth] coordinates {(1.5,.2) (1,-.2) (.5,.2)};
\draw plot [smooth] coordinates {(-1.25,0) (-1,.2) (-.75,0)};
\draw plot [smooth] coordinates {(1.25,0) (1,.2) (.75,0)};
\draw {(3,-.5) node{-1}[circle, draw, fill=black,inner sep=0pt, minimum width=5pt] -- (3,.5) node{-1}[circle, draw, fill=black!50,inner sep=0pt, minimum width=5pt]};
\end{tikzpicture}
\end{minipage} & \begin{minipage}[c]{0.24\linewidth}
\centering
\begin{tikzpicture}[scale=0.30]
\draw (0,0)--(0,2)--(6,2)--(6,0)--cycle;
\draw(3,2) node[above] {\tiny $\mathcal{E}_1$};
\draw(3,0) node[below] {\tiny $\mathcal{E}_1$};
\end{tikzpicture}
\end{minipage} \\
6) & \begin{minipage}[c]{0.24\linewidth}
\centering
\begin{tikzpicture}[thick,scale=0.4]
\draw[thick] (2,0) circle(1.5);
\draw[thick] (-2,0) circle(1.5);
\draw plot [smooth] coordinates {(-2.5,.2) (-2,-.2) (-1.5,.2)};
\draw plot [smooth] coordinates {(-2.25,0) (-2,.2) (-1.75,0)};
\draw plot [smooth] coordinates {(2.5,.2) (2,-.2) (1.5,.2)};
\draw plot [smooth] coordinates {(2.25,0) (2,.2) (1.75,0)};
\draw {(-1.2,.9) node{-1}[circle, draw, fill=black,inner sep=0pt, minimum width=5pt] -- (1.2,.9) node{-1}[circle, draw, fill=black!50,inner sep=0pt, minimum width=5pt]};
\draw {(-1.2,-.9) node{-1}[circle, draw, fill=black,inner sep=0pt, minimum width=5pt] -- (1.2,-.9) node{-1}[circle, draw, fill=black!50,inner sep=0pt, minimum width=5pt]};
\end{tikzpicture}
\end{minipage} & \begin{minipage}[c]{0.24\linewidth}
\centering
\begin{tikzpicture}[scale=0.30]
\draw (0,0)--(0,2)--(-2,2)--(-2,4)--(4,4)--(4,2)--(6,2)--(6,0)--cycle;
\draw(5,2) node[above] {\tiny $\mathcal{E}_2$};
\draw(-1,2) node[below] {\tiny $\mathcal{E}_2$};
\draw(3,0) node[below] {\tiny $\mathcal{E}_1$};
\draw(1,4) node[above] {\tiny $\mathcal{E}_1$};
\end{tikzpicture}
\end{minipage}  \\
\end{array}$$
\caption{Examples of Cylinder Configurations for Degenerate Surfaces: $\mathcal{E}_i$ denotes a collection of saddle connections and $-1$ indicates a simple pole of an Abelian differential}
\label{MainConfigTable}
\end{table}

\subsection{Reduction to Configurations 2), 3), 4)}

The goal of this subsection is to prove Lemma \ref{3PlusCyls}, which will dramatically reduce the combinatorics necessary to prove Theorem \ref{Rank1PrinStratum}.

\begin{lemma}
\label{SC2Cyls}
If a translation surface is the union of two non-homologous cylinders, then at least one of them has a saddle connection $\sigma$ on its top and bottom.
\end{lemma}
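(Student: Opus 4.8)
The plan is to argue by contradiction, extracting a contradiction from the combinatorics of how horizontal saddle connections glue the two cylinders together with the homological meaning of their boundaries. First I would record the standard bookkeeping fact: since $M = C_1 \cup C_2$ is a translation surface, each horizontal saddle connection $\sigma$ has a well-defined cylinder immediately above it and a well-defined cylinder immediately below it, so $\sigma$ occurs exactly once on the bottom of one $C_i$ and exactly once on the top of one $C_j$. This attaches to $\sigma$ an ordered pair $(i,j)$ recording (cylinder above, cylinder below) $\in \{1,2\}^2$. The assertion that a cylinder $C$ carries a saddle connection on both its top and its bottom is then precisely the assertion that some $\sigma$ has type $(1,1)$ or $(2,2)$; these are exactly the saddle connections that glue a cylinder to itself.

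Assume for contradiction that no such self-gluing saddle connection exists, so every $\sigma$ has type $(1,2)$ or $(2,1)$. I would then show this forces the top of $C_1$ to be glued entirely to the bottom of $C_2$, and the bottom of $C_1$ entirely to the top of $C_2$. Indeed, the top boundary $\partial^+ C_1$ consists exactly of those $\sigma$ below which sits $C_1$; excluding type $(1,1)$ forces each such $\sigma$ to have $C_2$ above it, hence to lie on $\partial^- C_2$, and excluding type $(2,2)$ gives the reverse inclusion, so $\partial^+ C_1 = \partial^- C_2$. The identical reasoning yields $\partial^- C_1 = \partial^+ C_2$.

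To finish I would pass to homology. After orienting all horizontal saddle connections in the (globally defined) positive horizontal direction, the top boundary of a cylinder, traversed once around, is a closed loop homologous to the core curve of that cylinder, since the cylinder $S^1 \times [0,1]$ realizes the homology between $S^1 \times \{1\}$ and its core; the same holds for the bottom boundary. Because the gluing maps of a translation surface are translations, and hence orientation-preserving in the horizontal direction, the equality $\partial^+ C_1 = \partial^- C_2$ is an identification of one and the same oriented loop, giving $[\gamma_1] = [\partial^+ C_1] = [\partial^- C_2] = [\gamma_2]$ in $H_1(X,\mathbb{Z})$. This contradicts the hypothesis that $C_1$ and $C_2$ are non-homologous, which proves the lemma.

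The combinatorial deduction that excluding self-gluings forces the complete cross-gluing is routine once the type bookkeeping is set up. The step requiring the most care is the homological one: I must check the orientation bookkeeping so that $\partial^+ C_1$ and $\partial^- C_2$ genuinely represent the same class, and that each top or bottom boundary represents the core curve itself rather than a multiple of it or a curve of opposite orientation. I expect this orientation check to be the only genuine subtlety, and it is settled by the fact that on a translation surface the horizontal direction is globally coherent and saddle connections are identified by translations.
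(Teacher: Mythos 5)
Your proof is correct and takes exactly the paper's approach in expanded form: the paper's entire proof reads ``If not, then the two cylinders would be homologous,'' and your type-$(i,j)$ bookkeeping showing $\partial^+ C_1 = \partial^- C_2$ and $\partial^- C_1 = \partial^+ C_2$, followed by the orientation-aware identification $[\gamma_1]=[\gamma_2]$ in $H_1(X,\mathbb{Z})$, is precisely the detail left implicit in that one line. Nothing further is needed.
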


\begin{proof}
If not, then the two cylinders would be homologous.
\end{proof}

Recall that a \emph{simple cylinder} is one whose boundaries each consist of a single saddle connection.

\begin{lemma}
\label{3PlusCyls}
If $\mathcal{M}$ is a rank one affine manifold with non-trivial REL in $\Omega\mathcal{M}_3$, then $\mathcal{M}$ contains a horizontally periodic translation surface with at least three cylinders.
\end{lemma}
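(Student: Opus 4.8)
The plan is to argue by contradiction: assume that every horizontally periodic translation surface in $\mathcal{M}$ decomposes into at most two cylinders, and then derive a contradiction from the non-trivial REL hypothesis via a dimension count on the twist space. Since a genus three surface has at most six cylinders, the number of horizontal cylinders is bounded, so I may choose a horizontally periodic $M \in \mathcal{M}$ realizing the maximal number of horizontal cylinders; by assumption this number is one or two. Complete periodicity (Theorem \ref{QuadImpCP}) guarantees that such horizontally periodic surfaces exist in the first place.

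The first step is to exploit maximality. By Lemma \ref{WrightLem86}, if $\text{Twist}(M,\mathcal{M}) \neq \text{Pres}(M,\mathcal{M})$, then $\mathcal{M}$ would contain a horizontally periodic surface with strictly more cylinders than $M$, contradicting maximality. Hence $\text{Twist}(M,\mathcal{M}) = \text{Pres}(M,\mathcal{M})$. Applying the contrapositive of the second assertion of Corollary \ref{CoreCurvesInd}, this equality forces the core curves of the cylinders of $M$ to be \emph{linearly dependent} in $H_1(X,\mathbb{R})$. Because the core curve of a cylinder of an Abelian differential is a nonzero, non-separating (hence primitive) homology class, a single cylinder cannot have linearly dependent core curves; so $M$ must have exactly two cylinders $C_1, C_2$, and linear dependence of the two proportional nonzero primitive classes $[\gamma_1], [\gamma_2]$ means they are homologous.

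The decisive step contradicts this configuration with a dimension count, exactly as in the proof of Theorem \ref{RELDefs}. On the one hand, since $\text{Twist}(M,\mathcal{M}) = \text{Pres}(M,\mathcal{M})$, the results of \cite[Lem. 8.8, Cor. 8.12]{WrightCylDef} give $\dim \text{Twist}(M,\mathcal{M}) = \dim T_{\mathbb{R}}(\mathcal{M}) - 1$; and because $\mathcal{M}$ has rank one (so $\dim p(T_{\mathbb{R}}(\mathcal{M})) = 2$) and non-trivial REL (so $\dim \ker p|_{T(\mathcal{M})} \geq 1$), we have $\dim T_{\mathbb{R}}(\mathcal{M}) \geq 3$ and therefore $\dim \text{Twist}(M,\mathcal{M}) \geq 2$. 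On the other hand, \cite[Cor. 8.3]{WrightCylDef} identifies $\text{Twist}(M,\mathcal{M}) = T_{\mathbb{R}}(\mathcal{M}) \cap \text{span}_{\mathbb{R}}\{\eta_{\gamma_1}, \eta_{\gamma_2}\}$, where $\eta_{\gamma_i} \in H^1(X,\mathbb{Z})$ is the class dual to $\gamma_i$; since $\gamma_1$ and $\gamma_2$ are homologous we have $\eta_{\gamma_1} = \eta_{\gamma_2}$, so this span is one-dimensional and $\dim \text{Twist}(M,\mathcal{M}) \leq 1$. The inequalities $\dim \text{Twist}(M,\mathcal{M}) \geq 2$ and $\dim \text{Twist}(M,\mathcal{M}) \leq 1$ are incompatible, completing the contradiction.

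I expect the main obstacle to be the two-homologous-cylinder case rather than the one-cylinder case: this configuration (Configuration 6 in Table \ref{MainConfigTable}) genuinely occurs in genus three, as it is realized in every periodic direction of the Eierlegende Wollmilchsau, so it cannot be excluded by purely topological considerations on its own. What rescues the argument is precisely the non-trivial REL hypothesis, which inflates $\dim T_{\mathbb{R}}(\mathcal{M})$ to at least three and thereby forces the twist space to be too large to fit inside the one-dimensional span of a single dual core-curve class. Care should be taken that the dual classes in Corollary \ref{CoreCurvesInd} lie in \emph{absolute} cohomology, so that homologous core curves really do produce identical, rather than merely proportional, dual classes.
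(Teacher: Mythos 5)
Your reduction is correct as far as it goes: maximality plus Lemma \ref{WrightLem86} forces $\text{Twist}(M,\mathcal{M}) = \text{Pres}(M,\mathcal{M})$, and the contrapositive of Corollary \ref{CoreCurvesInd} then forces $M$ to be exactly two homologous cylinders (Configuration 6 of Table \ref{MainConfigTable}). But the decisive dimension count contains a fatal error. The identity $\text{Twist}(M, \mathcal{M}) = T_{\mathbb{R}}(\mathcal{M}) \cap \text{span}_{\mathbb{R}}(\eta_{\gamma_1}, \eta_{\gamma_2})$ from \cite[Cor. 8.3]{WrightCylDef} takes place in \emph{relative} cohomology $H^1(X,\Sigma,\mathbb{R})$ --- it must, since $\text{Twist}(M,\mathcal{M}) \subset T_{\mathbb{R}}(\mathcal{M}) \subset H^1(X,\Sigma,\mathbb{R})$ and, as the proof of Theorem \ref{RELDefs} shows, the twist space contains nonzero purely relative classes whenever REL is non-trivial and $\text{Twist} = \text{Pres}$. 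The core curves of two homologous cylinders are homologous in $X$ but \emph{not} in $X \setminus \Sigma$ (the region between them contains zeros), so their relative duals satisfy $p(\eta_{\gamma_1}) = p(\eta_{\gamma_2})$ but $\eta_{\gamma_1} \neq \eta_{\gamma_2}$: the difference $\eta_{\gamma_1} - \eta_{\gamma_2}$ is precisely the nonzero element of $\ker p$ that twists the two cylinders against each other. Hence the span is two-dimensional, your upper bound weakens to $\dim \text{Twist}(M,\mathcal{M}) \leq 2$, which is perfectly compatible with $\dim \text{Twist}(M,\mathcal{M}) \geq 2$; the count yields at most $\dim T_{\mathbb{R}}(\mathcal{M}) \leq 3$, and a $3$-dimensional rank-one manifold with one-dimensional REL survives it unscathed. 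Your closing caveat --- insisting the $\eta_{\gamma_i}$ be read in absolute cohomology so that they literally coincide --- cannot rescue the step: under that reading, $\text{Twist} \subseteq \text{span}$ of absolute classes would force $\text{Twist} \cap \ker p = 0$ unconditionally, contradicting the very inequality $\dim \text{Twist}(M,\mathcal{M}) \cap \ker(p) \geq \dim T_{\mathbb{R}}(\mathcal{M}) - 2 > 0$ that your argument (and the paper's proof of Theorem \ref{RELDefs}) relies on. The paper's Corollary \ref{CoreCurvesInd} uses the duality only in the safe direction: independence of the \emph{absolute} classes makes $p$ injective on the span of the \emph{relative} duals; dependence tells you nothing about the size of the relative span, which always has dimension equal to the number of cylinders.

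The two-homologous-cylinder case genuinely cannot be dispatched by linear algebra at the given surface, and the paper handles it dynamically instead: using the REL twist from Theorem \ref{RELDefs}, it slides a saddle connection on the boundary shared by $C_1$ and $C_2$ until a vertical cylinder $C$ of circumference $h_1 + h_2$ closes up; complete periodicity (Theorem \ref{QuadImpCP}) makes the vertical direction fully periodic, and since a cylinder parallel to $C$ has a different circumference while homologous cylinders must have equal circumferences, the vertical decomposition cannot be Configuration 6 (nor 5). This lands in Configurations 1)--4), where 2)--4) already have at least three cylinders, and Configuration 1) --- and 5), via a transverse simple cylinder --- is finished exactly as you finish your one-cylinder case, by Corollary \ref{CoreCurvesInd} and Lemma \ref{WrightLem86}. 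To repair your proof you would need to replace the false equality $\eta_{\gamma_1} = \eta_{\gamma_2}$ with an argument of this transverse-direction kind.
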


\begin{proof}
First, assume that every cylinder decomposition of a periodic surface in $\cM$ decomposes into two homologous cylinders, i.e. Configuration 6).  By contradiction, if no translation surface in $\mathcal{M}$ admits any other cylinder decomposition, then Theorem \ref{RELDefs} guarantees that we can perform a REL twist on the two cylinders so that the top of $C_1$ and the bottom of $C_2$ are fixed while the two cylinders are twisted against each other.  Let $\sigma$ be a saddle connection on the top and bottom of $C_1$ and $C_2$, respectively.  Let $\sigma'$ be a saddle connection on the bottom of $C_1$ and the top of $C_2$, respectively.  Then we can move $\sigma'$ by a REL twist to any location we like.  Therefore, there exists a REL twist such that there is a closed regular trajectory from $\sigma$ to itself that passes through $\sigma'$ and yields a vertical cylinder $C$.  The circumference of $C$ is equal to the sum of the heights of $C_1$ and $C_2$.  By inspection and complete periodicity, there is a cylinder parallel to $C$ with a different circumference, which implies that the cylinder configuration cannot be Configuration 6).  By inspection of Table \ref{MainConfigTable}, we see that it must satisfy one of Configurations 1) through 4).

Next assume that $M$ has Configurations 1) or 5).  In Configuration 5), every saddle connection occurs on the top and bottom of the cylinder and so there is a foliation transverse to the horizontal foliation that contains a simple cylinder and it is periodic because $M$ is completely periodic by Theorem \ref{QuadImpCP}.  The only cylinder configurations that admit a simple cylinder are 1) through 4).

Let $M$ be the translation surface in Configuration 1).  Since the core curves of the cylinders represent homologically independent classes of curves, Corollary \ref{CoreCurvesInd} implies that $\text{Twist}(M, \mathcal{M}) \not= \text{Pres}(M, \mathcal{M})$.  Therefore, by Lemma \ref{WrightLem86}, there exists a perturbation of $M$ producing a translation surface with more than two cylinders.
\end{proof}

\subsection{Some Technical Lemmas}

We recall the notation of \cite[$\S$ 3]{AthreyaBufetovEskinMirzakhani}.  Let $\omega_1, \omega_2$ be two holomorphic $1$-forms on $X$.  Define the Hodge inner product on the space of $1$-forms on $X$ by 
$$\langle \omega_1, \omega_2 \rangle = \int_X \omega_1 \wedge \overline{\omega_2}.$$
Let $r$ be the natural map from first cohomology to the space of $1$-forms.  This induces an inner product on $H^1(X, \bR)$.  If $\lambda_1, \lambda_2 \in H^1(X, \bR)$, then
$$\langle \lambda_1, \lambda_2 \rangle = \langle r(\lambda_1), r(\lambda_2) \rangle = \int_X \lambda_1 \wedge \star \lambda_2,$$
where $\star$ denotes the Hodge star operator.  Naturally, this inner product induces a norm, known as the Hodge norm and denoted $\|\cdot \|$.  Let $\gamma \in H_1(X, \bR)$.  Define the cohomology class $\star c_{\gamma} \in H^1(X, \bR)$ so that for all $\omega \in H^1(X, \bR)$,
$$\int_{\gamma} \omega = \int_X \omega \wedge \star c_{\gamma}.$$
For $\gamma \in H_1(X, \bR)$ and $\lambda \in H^1(X, \bR)$, define $\lambda(\gamma) = \int_{\gamma}\lambda$ to be the usual pairing.

The following lemma is completely general and does not make any assumptions about the rank of the orbit closure.

\begin{lemma}
\label{ForniVanCyls}
Let $\gamma$ be the core curve of a cylinder $C$ on a translation surface $M = (X, \omega) \in \mathcal{M}$.  Let $F$ be the Forni subspace of $\mathcal{M}$, and let $\eta \in F \subset H^1(X, \mathbb{R})$.  Then $\eta(\gamma) = 0$.
\end{lemma}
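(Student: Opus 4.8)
The plan is to exploit the defining property of the Forni subspace---that the Kontsevich--Zorich cocycle acts on it by Hodge isometries---against the fact that the Hodge norm of the dual of a cylinder core curve collapses as the cylinder's modulus grows under the Teichm\"uller flow. Since the conclusion is invariant under the $\text{SL}_2(\mathbb{R})$-action, I may rotate so that $C$ is a horizontal cylinder and $\gamma$ is a horizontal core curve. By the definition of $\star c_{\gamma}$ together with the formula for the Hodge inner product, the quantity in question is $\eta(\gamma) = \int_{\gamma} \eta = \langle \eta, c_{\gamma} \rangle$; that is, $\eta(\gamma)$ is the Hodge inner product of $\eta$ with $c_{\gamma}$.

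Next I would flow by the Teichm\"uller geodesic flow $g_t = \mathrm{diag}(e^{-t}, e^{t})$, which contracts the horizontal direction and expands the vertical one, and track both sides of this pairing along the orbit. On one hand, the topological pairing $\int_{\gamma} \eta$ is preserved by the Gauss--Manin (flat) transport, since the core curve of $C$ persists as the same homology class under $g_t$ and integration of a flat-transported cocycle over a flat-transported cycle is constant; hence $(G_t^{KZ}\eta)(\gamma) = \eta(\gamma)$ for all $t$. On the other hand, computing the pairing as a Hodge inner product at the surface $g_t M$ and applying Cauchy--Schwarz gives
$$|\eta(\gamma)| = \big| \langle G_t^{KZ}\eta,\, c_{\gamma} \rangle_{g_t M} \big| \leq \|G_t^{KZ}\eta\|_{g_t M}\,\|c_{\gamma}\|_{g_t M}.$$
Because $\eta$ lies in the Forni subspace, on which the cocycle acts isometrically, the first factor is constant: $\|G_t^{KZ}\eta\|_{g_t M} = \|\eta\|_M$.

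It therefore suffices to show that the second factor $\|c_{\gamma}\|_{g_t M}$ tends to $0$ as $t \to +\infty$. This is where the only quantitative input enters: the Hodge norm of the dual of a cylinder core curve is controlled by the modulus of the cylinder. Writing $C$ as $(\mathbb{R}/\ell\mathbb{Z}) \times [0,h]$ with modulus $m = h/\ell$, the closed one-form $\tfrac{1}{h}\,dy$ supported on $C$ represents $\star c_{\gamma}$ and has $L^2$-norm squared equal to $\ell/h = 1/m$; since the Hodge norm is the $L^2$-norm of the harmonic representative, $\|c_{\gamma}\|^2 = \|\star c_{\gamma}\|^2 \leq 1/m$ (up to a negligible boundary correction, or by citing the standard estimate of \cite{AthreyaBufetovEskinMirzakhani}). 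Under $g_t$ the circumference shrinks and the height grows, so the modulus of $C$ grows like $e^{2t}m$ and $\|c_{\gamma}\|_{g_t M}^2 \to 0$. Combining this with the constancy of $\|G_t^{KZ}\eta\|_{g_t M}$ forces $\eta(\gamma) = 0$.

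The main obstacle is getting the geometry and the bookkeeping exactly right: confirming that the chosen flow direction genuinely sends the modulus to infinity rather than to zero, verifying the upper bound on $\|c_{\gamma}\|$ via an honest globally defined closed representative, and checking that the pairing $\eta(\gamma)$ really is flow-invariant while $\|\eta\|$ is genuinely constant (not merely bounded) precisely because $\eta$ sits in the Forni subspace. No recurrence or genericity of the orbit is needed, since the isometry on $F$ is exact and the decay of $\|c_{\gamma}\|$ is an explicit estimate valid at every time $t$; this also explains why the lemma holds with no hypothesis on the rank of $\mathcal{M}$.
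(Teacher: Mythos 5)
Your proof is correct, and it gets there by a genuinely lighter mechanism than the paper's. Both arguments share the same skeleton---rewrite $\eta(\gamma)$ as the Hodge pairing $\langle \eta, c_\gamma \rangle$, note that this pairing is constant along a deformation because it is topological, and beat it down with Cauchy--Schwarz against a vanishing $\|c_\gamma\|$---but they differ in how the deformation and the norm decay are produced. The paper stays inside $\cM$ by stretching the equivalence class of cylinders parallel to $C$ via Wright's cylinder deformation theorem (Theorem \ref{WrightCylDefThm}), and then invokes the collar lemma together with \cite[Thm. 3.1]{AthreyaBufetovEskinMirzakhani} to convert short hyperbolic length of $\gamma$ into small Hodge norm of $c_\gamma$. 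You instead deform only by the Teichm\"uller geodesic flow, which tautologically remains in $\cM$, so Wright's theorem is not needed at all; and your decay comes from the elementary test-form estimate $\|c_\gamma\|^2 = \|\star c_\gamma\|^2 \leq \ell/h = 1/m$ (a bump form supported in $C$ represents $\star c_\gamma$ and the harmonic representative minimizes the $L^2$-norm), so neither hyperbolic geometry nor the collar lemma enters, and your bookkeeping on the flow direction (modulus growing like $e^{2t}m$ under $\mathrm{diag}(e^{-t},e^t)$ applied to a horizontal cylinder) is right. One citation-level point to tighten: the isometric action of the cocycle on $F$ is a priori an almost-everywhere statement, while the surface $M$ in the lemma (as used later in the paper) is typically non-generic, e.g.\ horizontally periodic; to assert $\|G_t^{KZ}\eta\|_{g_tM} = \|\eta\|_M$ exactly at such an $M$ you should invoke, as the paper does via Theorem \ref{ForniConst} and \cite[Thm. 1.5]{AvilaEskinMollerForniBundle}, that $F$ and the Hodge inner product restricted to it are constant (flat) along all of $\cM$, which then gives exact isometry along the $g_t$-orbit of every point. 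With that citation supplied, your argument is complete and arguably more self-contained for this particular lemma, though the paper's choice of cylinder stretches matches the deformation toolkit it reuses throughout the rest of its proofs.
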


\begin{proof}
By the definition of the Forni subspace, the KZ-cocycle is isometric with respect to the Hodge metric on $F$.  In particular, this implies that the Hodge norm is constant on all elements in $F$.

Let $\pi: H^1(X) \rightarrow F$ denote the projection to the Forni subspace.  If $\pi(c_{\gamma}) = 0$, then we are done.  Otherwise, $\langle \eta, \pi(c_{\gamma}) \rangle = A$, where $A$ is constant over each translation surface in $\cM$. By \cite[Thm. 1.5]{AvilaEskinMollerForniBundle} and the Cauchy-Schwarz inequality, we have
$$|A| = |\langle \eta, \pi(c_{\gamma}) \rangle| = |\langle \eta, c_{\gamma} \rangle| \leq \|\eta\| \|c_{\gamma}\|.$$
The relations above imply $\eta(\gamma) = \langle \eta, c_{\gamma} \rangle$.

Since $\gamma$ is the core curve of $C$, use Theorem \ref{WrightCylDefThm} to stretch all cylinders parallel to $C$ so that by the collar lemma, the hyperbolic length of $\gamma$ tends to zero.  By \cite[Thm. 3.1]{AthreyaBufetovEskinMirzakhani}, the Hodge norm of $c_{\gamma}$ tends to zero with the hyperbolic length.  Since the above inequality holds over all $(X, \omega) \in \cM$, we must have $A = 0$.
\end{proof}

\begin{corollary}
\label{HomBasisForniContraCor}
Let $M$ be a horizontally periodic genus three translation surface in a rank one orbit closure $\mathcal{M}$ with non-trivial REL.  Let $M$ admit an absolute homology basis $\{a_1, a_2, a_3, b_1, b_2, b_3\}$.  If
\begin{itemize}
\item For each $\gamma \in \{a_1, a_2, b_1, b_2, b_3\}$, there exists $M_{\gamma} \in \cM$ such that $\gamma$ is the core curve of a cylinder on $M_{\gamma} \in \cM$, or
\item $\cB = \{b_1, b_2, b_3\}$ generates a Lagrangian subspace of the absolute homology space of $M$, and for each $\gamma \in \cB$ there exists $M_{\gamma} \in \cM$ such that $\gamma$ is the core curve of a cylinder on $M_{\gamma}$,
\end{itemize}
then the Forni subspace of $\mathcal{M}$ has dimension zero.
\end{corollary}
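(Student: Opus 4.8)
The plan is to exploit two facts about the Forni subspace $F$ recorded in Theorem~\ref{ForniConst}: that $F$ is \emph{constant} along the orbit closure under the Gauss--Manin connection, and that $F$ is \emph{symplectic}. Constancy lets me compare $F$ across the various surfaces $M_\gamma$ and evaluate a fixed class $\eta\in F$ against any of the homology classes $a_i,b_i$; the symplectic property means that once I show $F$ is contained in an isotropic subspace, or that $\dim F\le 1$, I may conclude $\dim F=0$, since an isotropic symplectic subspace is trivial. So the entire argument reduces to extracting enough vanishing relations on $F$.

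The engine for these relations is Lemma~\ref{ForniVanCyls}. For any $\gamma$ in the given list there is a surface $M_\gamma\in\cM$ on which $\gamma$ is the core curve of a cylinder, so that lemma gives $\eta(\gamma)=0$ for every $\eta\in F$ at $M_\gamma$. Because $F$ is a flat subbundle and the Kronecker pairing between $H^1$ and $H_1$ is preserved by the Gauss--Manin connection, transporting $\eta$ and $\gamma$ back to $M$ along a path in $\cM$ shows that $\eta(\gamma)=0$ holds for the fixed class $\eta\in F$ viewed on $M$. Thus each hypothesis of the corollary translates directly into the vanishing of $F$ on a prescribed set of homology classes.

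In the first case I obtain $\eta(a_1)=\eta(a_2)=\eta(b_1)=\eta(b_2)=\eta(b_3)=0$ for every $\eta\in F$. Since $\{a_1,a_2,a_3,b_1,b_2,b_3\}$ is a basis of $H_1(X,\bR)$, writing $\eta$ in the dual basis shows that these five conditions force $\eta$ to be a multiple of the single class dual to $a_3$; hence $\dim F\le 1$, and as $F$ is symplectic, $\dim F=0$. In the second case I obtain $\eta(b_1)=\eta(b_2)=\eta(b_3)=0$, i.e. $\eta$ annihilates the Lagrangian $L=\mathrm{span}_{\bR}\cB$. Passing through Poincar\'e duality $PD\colon H_1\to H^1$, which intertwines the intersection form with the cup-product symplectic form, the relation $\eta(b_i)=0$ reads $\langle \eta, PD(b_i)\rangle=0$, so $\eta$ lies in the symplectic orthogonal of the Lagrangian $PD(L)$. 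A Lagrangian equals its own symplectic orthogonal, so $F\subseteq PD(L)$ is isotropic; being also symplectic, it must satisfy $\dim F=0$.

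The main obstacle is making the cross-surface comparison rigorous: Lemma~\ref{ForniVanCyls} is a statement on a single surface $M_\gamma$, whereas I need a conclusion about $F$ as one fixed subspace, so I must carefully invoke the constancy of $F$ (Theorem~\ref{ForniConst}) together with the flatness of the homology--cohomology pairing to transport each vanishing relation back to $M$. The only other point requiring attention is the symplectic bookkeeping in the second case---correctly identifying, via Poincar\'e duality, the annihilator in cohomology of a Lagrangian in homology with a Lagrangian in cohomology---but this is routine once the two forms are matched up.
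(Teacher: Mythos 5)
Your proposal is correct and follows essentially the same route as the paper: you combine Lemma~\ref{ForniVanCyls} with the constancy and symplecticity of $F$ from Theorem~\ref{ForniConst}, concluding in the first case that $F$ lies in a $1$-dimensional (hence trivial, being symplectic) subspace and in the second that $F$ is contained in a Lagrangian, hence isotropic and trivial. Your explicit flat-transport and Poincar\'e-duality bookkeeping merely spells out steps the paper leaves implicit.
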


\begin{proof}
By Lemma \ref{ForniVanCyls} and the first assumption, every element of the Forni subspace evaluates to zero on a $5$-dimensional subspace of absolute cohomology.  In genus three, absolute cohomology is $6$-dimensional.  By Theorem \ref{ForniConst}, the Forni subspace is symplectic, which implies it must be zero dimensional.

By Lemma \ref{ForniVanCyls} and the second assumption, every element of the Forni subspace evaluates to zero on the three elements of $\cB$.  By Theorem \ref{ForniConst}, the Forni subspace is a symplectic subspace of absolute homology, so it cannot be orthogonal to a Lagrangian subspace of absolute homology unless it is zero dimensional.
\end{proof}

\subsection{Configuration 2)}

\begin{convention}
There will be several cylinder diagrams presented in the remainder of this section.  We introduce a non-standard convention that will be very helpful in proving the existence of trajectories we claim exist.  Instead of drawing a cylinder as a parallelogram to indicate that it is twisted relative to another cylinder in the diagram.  We draw the cylinder as a rectangle and do not assume that there are zeros at the corner of the rectangle as is usually assumed to be the case throughout the literature.  For an example of a cylinder diagram using this convention see Figure \ref{CylDiagConv} for two equivalent diagrams: the left one uses the usual convention and the right one uses the one described here.  We will use a prime to indicate the continuation of a saddle connection from one side of a rectangle to the other.
\end{convention}

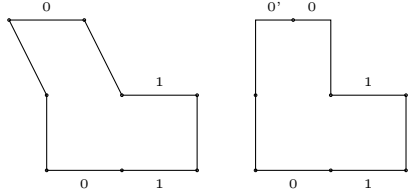
\begin{figure}[ht]
\centering
\begin{minipage}[b]{0.24\linewidth}
\centering
\begin{tikzpicture}[scale=0.50]
\draw (0,0)--(0,2)--(-1,4)--(1,4)--(2,2)--(4,2)--(4,0)--cycle;
\foreach \x in {(0,0),(0,2),(-1,4),(1,4),(2,2),(4,2),(4,0),(2,0)} \draw \x circle (1pt);
\draw(0,4) node[above] {\tiny 0};
\draw(3,2) node[above] {\tiny 1};
\draw(1,0) node[below] {\tiny 0};
\draw(3,0) node[below] {\tiny 1};
\end{tikzpicture}
\end{minipage}
\begin{minipage}[b]{0.24\linewidth}
\centering
\begin{tikzpicture}[scale=0.50]
\draw (0,0)--(0,4)--(2,4)--(2,2)--(4,2)--(4,0)--cycle;
\foreach \x in {(0,0),(0,2),(1,4),(2,2),(4,2),(4,0),(2,0)} \draw \x circle (1pt);
\draw(.5,4) node[above] {\tiny 0'};
\draw(1.5,4) node[above] {\tiny 0};
\draw(3,2) node[above] {\tiny 1};
\draw(1,0) node[below] {\tiny 0};
\draw(3,0) node[below] {\tiny 1};
\end{tikzpicture}
\end{minipage}
 \caption{Cylinder Diagram Convention: Standard Convention (left) and Current Convention (right)}
 \label{CylDiagConv}
\end{figure}

\begin{remark}
We invite the reader to check that if a cylinder diagram in genus three satisfies Configuration 2), then it must lie in
$$\cH(3,1) \cup \cH(2,1,1) \cup \cH(1,1,1,1).$$
This fact will not be used explicitly, but it helps to have a more accurate picture of this case.
\end{remark}

\begin{lemma}
\label{CylDiag2}
Let $\mathcal{M}$ be a rank one affine manifold with non-trivial REL and $2$-dimensional Forni subspace.  If $M \in \mathcal{M}$ satisfies Configuration 2), then either there exists $M' \in \cM$ satisfying Configuration 4) or there exists $\cM'$ in a lower dimensional stratum in genus three such that $\cM'$ has a $2$-dimensional Forni subspace.
\end{lemma}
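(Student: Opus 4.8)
The plan is to feed the Configuration 2) surface $M$ into the REL machinery of Theorem \ref{RELDefs} and to split on whether its horizontal decomposition is already ``maximal'' in the sense of Lemma \ref{WrightLem86}. First I would record the combinatorial data: three cylinders $C_1,C_2,C_3$ with $\mathrm{top}(C_1)\cup\mathrm{top}(C_2)=\mathrm{bottom}(C_3)$, a single simple zero $z_0$ on the sphere part (whose three simple poles have residues summing to zero), and the remaining zeros of total order three carried by the torus part. Because the core curves span a two-dimensional subspace of $H_1(X,\bR)$, the only relation is $\gamma_3=\gamma_1+\gamma_2$ up to sign, and Lemma \ref{ForniVanCyls} shows each $\eta\in F$ kills $\gamma_1,\gamma_2,\gamma_3$. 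These impose only two conditions, so $\dim F=2$ is not yet contradicted; the proof must therefore either expose more cylinders or degenerate the surface.

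The dichotomy I would use is whether $\mathrm{Twist}(M,\mathcal{M})=\mathrm{Pres}(M,\mathcal{M})$. If they are unequal, then Lemma \ref{WrightLem86} produces a horizontally periodic surface in $\mathcal{M}$ with strictly more than three cylinders. But every such surface still has $\dim F=2$, so by Lemma \ref{Gen3TopConfigs} its decomposition is one of the six configurations, and the only one with more than three cylinders is the four-cylinder Configuration 4). This yields the first alternative directly, and there is nothing to check about which gluing occurs beyond pointing to Lemma \ref{CylDiags4}. The count cannot run past four: by Corollary \ref{HomBasisForniContraCor}, five homologically independent core curves, or three spanning a Lagrangian, would force $\dim F=0$.

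If instead $\mathrm{Twist}(M,\mathcal{M})=\mathrm{Pres}(M,\mathcal{M})$, then, exactly as in the proof of Theorem \ref{RELDefs}, $\mathrm{Twist}(M,\mathcal{M})\cap\ker p\ne 0$ and the REL stretch $a_\tau$ is available; I would run it, continuing it as the theorem permits until the first degeneration. Since the decomposition admits no extra cylinder near $M$, the stretch terminates either by a cylinder ceasing to persist through the appearance of a new horizontal saddle connection---which again splits off a fourth cylinder and lands us in Configuration 4) by the previous paragraph---or by a collision of two zeros. In the latter case the limit lies in a strictly smaller stratum $\mathcal{H}(\kappa')$ of the same genus three, such as $\mathcal{H}(2,1,1)$ or $\mathcal{H}(3,1)$; Lemma \ref{DetLocusClosed} then gives the boundary orbit closure $\mathcal{M}'$ a non-trivial Forni subspace, and since the two-dimensional $F$ embeds into $F(\mathcal{M}')$ while the four-dimensional maximal case is completely classified in \cite{AulicinoCompDegKZ, AulicinoCompDegKZAIS, MollerShimuraTeich}, we may take $\dim F(\mathcal{M}')=2$. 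This is the second alternative.

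The step I expect to be the main obstacle is controlling the terminal behaviour of the REL stretch in the second branch: I must rule out that $a_\tau$ simply pinches a core curve (a height tending to zero) without either colliding two zeros or forcing a new saddle connection, for such a terminus is a genus-dropping degeneration to which Lemma \ref{DetLocusClosed} does not apply. I expect the resolution to come from the Configuration 2) geometry itself: the relation $\gamma_3=\gamma_1+\gamma_2$ together with the requirement that $a_\tau$ fix all absolute periods couples the three heights so tightly that, before any single cylinder can collapse, the vertical separatrices emanating from $z_0$ and the torus zeros are driven together. Verifying this coupling, and checking that the resulting collision merges two distinct zeros rather than degenerating a handle, is where the real work lies.
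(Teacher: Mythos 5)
Your first branch is exactly the paper's: if $\text{Twist}(M,\mathcal{M}) \not= \text{Pres}(M,\mathcal{M})$, Lemma \ref{WrightLem86} plus Lemma \ref{Gen3TopConfigs} force Configuration 4). The gap is in the second branch, and it is twofold. First, you never actually control what the REL stretch degenerates, and you explicitly defer this (``where the real work lies''). The paper's mechanism is concrete: the absolute cycles crossing $C_1 \cup C_3$ and $C_2 \cup C_3$ have fixed imaginary periods $h_1+h_3$ and $h_2+h_3$, so stretching $h_3$ shrinks $h_1$ and $h_2$ at the same rate; assuming $h_1 < h_2$, one first performs a REL \emph{twist} placing exactly one vertical saddle connection inside the shorter cylinder $C_1$, and then the stretch collapses precisely that saddle connection while $C_2$ survives with height $h_2 - h_1 > 0$. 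Since real parts of periods are frozen, only vertical saddle connections in $C_1$ can shrink, there is exactly one by construction, and the Configuration 2) combinatorics (the simple zero $v_1$ on the bottom of $C_1$ appears nowhere on its top) guarantee it joins distinct zeros, so no closed curve pinches and the genus is preserved --- this is the resolution of your flagged obstacle, and it comes from arranging the degeneration in advance, not from the coupling of heights alone. Relatedly, your claim that the stretch can terminate ``through the appearance of a new horizontal saddle connection'' splitting off a fourth cylinder is not a real alternative: a REL stretch fixes all horizontal period data, so no new horizontal saddle connections appear; the dichotomy you want does not self-assemble from generic terminal behaviour.

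Second, and more seriously, your argument has no branch for $h_1 = h_2$. In that case $C_1$ and $C_2$ collapse simultaneously, your ``first degeneration'' can pinch a closed curve (the union of two vertical saddle connections sharing the zero $v_1$), and no stratum drop is produced. The paper handles this with a genuinely different argument: twist so that there are \emph{no} vertical saddle connections, collapse $C_1$ and $C_2$ together to obtain a one-cylinder surface $M'$ still inside $\mathcal{M}$ in the same stratum, observe that any three transverse curves $b_1,b_2,b_3$ joining bottom to top of the single cylinder are core curves of cylinders (and remain so nearby), and invoke Corollary \ref{HomBasisForniContraCor} to contradict $\dim F = 2$ --- thereby forcing $\text{Twist}(M,\mathcal{M}) \not= \text{Pres}(M,\mathcal{M})$ and routing this case back to Configuration 4). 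Without this case your proof does not close. (A small further caution: Lemma \ref{DetLocusClosed} only gives $F(\mathcal{M}')$ non-trivial and containing the limit of $F$; your appeal to the classification of the maximally degenerate case to ``take $\dim F(\mathcal{M}') = 2$'' is an extra step the paper does not need and you have not justified.)
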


\begin{proof}
If $\text{Twist}(M, \mathcal{M}) \not= \text{Pres}(M, \mathcal{M})$, then by Lemma \ref{WrightLem86}, there exists a translation surface $M' \in \cM$ with more cylinders.  However, $\cM$ has a $2$-dimensional Forni subspace, and the only cylinder configuration with more than three cylinders is Configuration 4) by Lemma \ref{Gen3TopConfigs}.  Thus, it suffices to assume that $\text{Twist}(M, \mathcal{M}) = \text{Pres}(M, \mathcal{M})$, and Theorem \ref{RELDefs} implies $M$ can be deformed by REL stretches and REL twists.

First we consider the case where $C_1$ and $C_2$ have distinct heights, and perform a REL twist so that the shorter of the two cylinders has a zero on its top lying directly over a zero on its bottom, i.e. the shorter cylinder contains a vertical saddle connection.  We claim that we can collapse the cylinder with a REL stretch and every other cylinder will persist.  Observe that there are absolute homology cycles $b_1$ and $b_2$ crossing the heights of $C_3$, and $C_1$ and $C_2$, respectively.  Let $h_i$ denote the height of $C_i$.  The imaginary parts of the periods of $b_1$ and $b_2$ are $h_1 + h_3$ and $h_2 + h_3$, respectively.  Hence, both of these quantities are fixed under a REL deformation by definition.  Without loss of generality, let $C_1$ be the shorter cylinder, so that $h_1 < h_2$.  Performing a REL stretch that increases the height $h_3$ causes $h_1$ and $h_2$ to decrease.  Since $h_1 < h_2$, the height of $C_1$ will converge to zero, while the height of cylinder $C_2$ becomes $h_2 - h_1 > 0$.  Let $M'$ be the resulting degenerate surface.

We claim that $M'$ is a translation surface of genus three in a lower dimensional stratum than $M$.  To see this it suffices to prove that no curves degenerated to zero length, while a single saddle connection with distinct zeros at each end converged to a point.  Since the resulting surface $M'$ has finite non-zero area, any curve that pinched must be a union of saddle connections on $M_n$.  Therefore, it suffices to show that exactly one saddle connection between distinct zeros degenerated to a point under the REL stretch above.  First observe that all horizontal saddle connections had fixed length under the REL stretch so all horizontal saddle connections persist on $M'$.  Secondly, observe that any saddle connection crossing the height of $C_2$ or $C_3$ also has length bounded away from zero because the heights of the cylinders, which are both non-zero in the limit, are a lower bound for the lengths of saddle connections traversing the heights of $C_2$ and $C_3$.  Therefore, only the length of a saddle connection entirely contained in $C_1$ can converge to zero length.  However, since the horizontal (real) components of periods are fixed under a REL stretch, only a vertical saddle connection contained in $C_1$ can degenerate to a point on $M'$.  Since there is exactly one such saddle connection by construction, no other degeneration occurs.  Finally, by inspection of the cylinder diagram for $M$, we see that the bottom of $C_1$ consists of a single simple zero $v_1$ that only occurs on the bottoms of $C_1$ and $C_2$, and the top of $C_3$.  Hence, every zero on the top of $C_1$ is distinct from $v_1$ and a saddle connection between two distinct zeros can never represent a nonzero element of absolute homology.  By Lemma \ref{DetLocusClosed}, the orbit closure $\cM'$ of $M'$ also has a $2$-dimensional Forni subspace.

Next assume that $C_1$ and $C_2$ have equal heights, and by contradiction that $\text{Twist}(M, \mathcal{M}) = \text{Pres}(M, \mathcal{M})$.  Let $a_1$ and $a_2$ be the core curves of $C_1$ and $C_2$, and let them represent absolute homology cycles.  By assumption, perform a REL twist so that there are no vertical saddle connections and collapse the cylinders $C_1$ and $C_2$ by a REL stretch so that the resulting surface $M'$ also lies in $\cM$ because by the same argument as above no zeros collide.  Then observe that $M'$ will be a horizontal cylinder, and since $M$ and $M'$ have the same genus, $a_1$ and $a_2$ will persist as absolute homology cycles, but obviously not core curves of cylinders.  Since $M'$ is a single cylinder, any choice of $b_1$, $b_2$, and $b_3$ that connect a saddle connection in the bottom of $M'$ to a saddle connection in the top of $M'$ will represent core curves of cylinders.  By reversing the collapse of the cylinders, the curves $b_i$ can still be realized as the core curves of cylinders in a sufficiently small neighborhood of $M'$.  Hence, the assumptions of Corollary \ref{HomBasisForniContraCor} are satisfied, and the resulting contradiction proves $\text{Twist}(M, \mathcal{M}) \not= \text{Pres}(M, \mathcal{M})$.
\end{proof}

\subsection{Configuration 3)}

\begin{remark}
We invite the reader to check that if a cylinder diagram in genus three satisfies Configuration 3), then it must lie in
$$\cH(2,2) \cup \cH(2,1,1) \cup \cH(1,1,1,1).$$
\end{remark}

\begin{lemma}
\label{CylDiags3}
If a cylinder diagram admits Configuration 3), then there are three possible cylinder diagrams in $\cH(1,1,1,1)$ up to reflection and they are depicted in Figures \ref{Config3ACylDiagsFig} and \ref{Config3BCCylDiagsFig}.
\end{lemma}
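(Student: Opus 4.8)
The plan is a direct combinatorial enumeration that exploits the topological skeleton of Configuration 3 together with the rigid numerical constraints of the principal stratum. From Lemma \ref{Gen3TopConfigs} and the discussion preceding it, a Configuration 3 diagram has exactly three cylinders $C_1, C_2, C_3$, with $C_2, C_3$ homologous; pinching $C_1$ produces a self-node on the sphere, while pinching $C_2$ and $C_3$ produces the two nodes joining the torus to the sphere, and in addition the top of $C_3$ is entirely identified with the bottom of $C_2$. Consequently the only boundary data still free to be chosen are the identifications involving the top of $C_2$, the bottom of $C_3$, and the two ends of $C_1$, and any choice must simultaneously reconstruct a genus three surface and place all four singularities in $\cH(1,1,1,1)$. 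My first move is therefore to record this reduced list of free boundaries and treat the enumeration as a finite gluing problem on them.

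Next I would pin down how the singularities distribute. Applying $\sharp(\text{Zeros}) - \sharp(\text{Poles}) = 2g-2$ separately to the genus-one torus (carrying the two poles from $C_2, C_3$) and to the genus-zero sphere (carrying the remaining four poles, two from $C_2, C_3$ and two from the self-node of $C_1$) forces exactly two simple zeros onto each part, accounting for all four zeros of $\cH(1,1,1,1)$. This, combined with the global count $N = (2g-2) + \#\text{zeros} = 8$ of horizontal saddle connections and the fact that each saddle connection occurs once on a top boundary and once on a bottom boundary, sharply bounds the number of saddle connections along the interface $\mathrm{top}(C_3)=\mathrm{bottom}(C_2)$ and along the two boundaries of $C_1$. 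These bounds are what make the enumeration finite and short.

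With the numerics fixed, I would enumerate the admissible gluings of the remaining free boundary circles, organizing the cases by how the two ends of $C_1$ are inserted relative to $\mathrm{top}(C_2)$ and $\mathrm{bottom}(C_3)$. I expect exactly three combinatorially inequivalent insertions — these are the ``three very different identifications'' mentioned after the statement of Configuration 3. For each candidate I would check, by computing the cone angle at each singularity, that all four zeros are genuinely simple (thereby discarding diagrams that instead land in $\cH(2,2)$, $\cH(2,1,1)$, etc.) and that the genus is indeed three; I would then discard reflected duplicates. Rendering the survivors in the rectangle-with-primes convention introduced above yields precisely the diagram of Figure \ref{Config3ACylDiagsFig} and the two diagrams of Figure \ref{Config3BCCylDiagsFig}.

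The main obstacle is the completeness and correctness of this case analysis, and specifically the subtle point of \emph{where the genus-one handle comes from}. Naively stacking $C_2$ over $C_3$ along the identified interface glues two annuli into a single annulus, which caps off to a sphere rather than a torus; so the genus one topology of the torus part must be produced by a nontrivial permutation of the saddle connections along the identified boundaries, not by an identity gluing. Tracking this handle-creating identification while simultaneously enforcing the simple-zero condition of $\cH(1,1,1,1)$ and quotienting by the reflection symmetry is where essentially all of the work lies. Once the bookkeeping is indexed by the number of saddle connections along $\mathrm{top}(C_3)=\mathrm{bottom}(C_2)$ and by the placement of the two ends of $C_1$, I expect the list of consistent diagrams to collapse exactly to the three claimed.
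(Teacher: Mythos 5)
Your setup is sound: the count of eight horizontal saddle connections, the degree count $\sharp(\text{Zeros})-\sharp(\text{Poles})=2g-2$ applied per part (forcing two simple zeros on the torus and two on the sphere), and especially your observation that the genus-one handle must come from a non-identity permutation of the saddle connections along the interface $\mathrm{top}(C_3)=\mathrm{bottom}(C_2)$ are all correct and consistent with the paper. But the proposal stops exactly where the content of the lemma begins. The decisive step is the sentence ``I expect exactly three combinatorially inequivalent insertions,'' and you yourself concede that ``essentially all of the work lies'' in tracking the handle-creating identification while enforcing the simple-zero condition and quotienting by reflection. As written, there is no argument that your finite gluing problem has exactly three solutions rather than two or five: the enumeration is announced, its difficulties are correctly diagnosed, but it is never executed. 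That is a genuine gap, since the lemma \emph{is} that enumeration.

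The paper closes this gap not by brute force but by a splitting trick you are missing, and it resolves precisely the two points you flag as hard. First, the torus-forming identification of the two homologous cylinders is \emph{unique} once the order of the zeros along the interface is fixed, because there is a unique $1$-cylinder diagram in $\cH(1,1)$; this disposes of your handle-creation worry in one line, rather than leaving the interface permutation as open bookkeeping. Second---and this is the key reduction---if one forgets the torus interface and pretends the homologous pair $C_2, C_3$ is a single cylinder, the surface becomes a $2$-cylinder diagram in $\cH(1,1)$, of which there are exactly two. Reversing the operation, the count is the number of ways to choose which cylinder of such a diagram to cut open and replace by the homologous pair with its (unique) torus insertion: the symmetric $2$-cylinder diagram gives one choice, yielding Diagram 3C) of Figure \ref{Config3BCCylDiagsFig}, and the asymmetric one gives two, yielding Diagrams 3A) and 3B) of Figures \ref{Config3ACylDiagsFig} and \ref{Config3BCCylDiagsFig}, for a total of three. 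To repair your proof you must either actually carry out the case analysis you outline (indexing gluings by the interface data and the placement of the two ends of $C_1$, with the cone-angle checks done explicitly), or import the classification of $1$- and $2$-cylinder diagrams in $\cH(1,1)$ as the paper does, which collapses your open-ended enumeration to the count $1+2=3$.
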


\begin{proof}
To see this we ``split'' the cylinder diagrams into two parts.  There is the identification along the two homologous cylinders that identify to make the torus, and the rest of the surface.  The identification of the two homologous cylinders depends entirely on the order of the zeros between them because there is exactly one way to identify two cylinders, each having two simple zeros along them, to get a torus.  This follows from the fact that there is a unique $1$-cylinder diagram in $\cH(1,1)$.

Next if we ignore the identification of the two homologous cylinders along a torus and pretend that the homologous cylinders are a single cylinder, we see that such a surface would be a $2$-cylinder diagram in $\cH(1,1)$.  There are exactly two $2$-cylinder diagrams in $\mathcal{H}(1,1)$.  One of the two diagrams in $\cH(1,1)$ is symmetric in the sense that topologically, the cylinders are indistinguishable and therefore, if we replace one of them with the two homologous cylinders, we get Cylinder Diagram 3C).  In the other $2$-cylinder diagram in $\mathcal{H}(1,1)$ we have two choices of cylinders we can pretend were actually resulting from the two homologous cylinders and this yields the other two cylinder diagrams.
\end{proof}

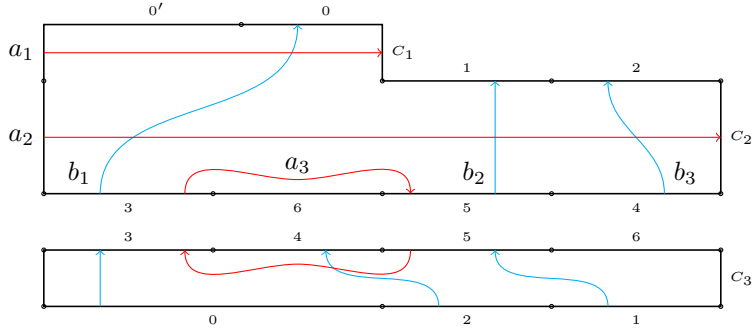
\begin{figure}
 \centering
\begin{tikzpicture}[scale=0.75]
\draw [semithick] (0,1)--(0,4)--(6,4)--(6,3)--(12,3)--(12,1)--cycle;
\draw [semithick] (0,-1)--(0,0)--(12,0)--(12,-1)--cycle;
\foreach \x in {(3.5,4),(0,3),(6,3),(9,3),(12,3),(0,0),(3,0),(6,0),(9,0),(12,0),(0,1),(3,1),(6,1),(9,1),(12,1), (0,-1),(6,-1),(9,-1),(12,-1)} \draw \x circle (1pt);
\draw(6,3.5) node[right] {\tiny $C_1$};
\draw(12,2) node[right] {\tiny $C_2$};
\draw(12,-0.5) node[right] {\tiny $C_3$};
\draw(5,4) node[above] {\tiny $0$};
\draw(2,4) node[above] {\tiny $0'$};
\draw(7.5,3) node[above] {\tiny $1$};
\draw(10.5,3) node[above] {\tiny $2$};
\draw(1.5,1) node[below] {\tiny $3$};
\draw(4.5,1) node[below] {\tiny $6$};
\draw(7.5,1) node[below] {\tiny $5$};
\draw(10.5,1) node[below] {\tiny $4$};
\draw(1.5,0) node[above] {\tiny $3$};
\draw(4.5,0) node[above] {\tiny $4$};
\draw(7.5,0) node[above] {\tiny $5$};
\draw(10.5,0) node[above] {\tiny $6$};
\draw(3,-1) node[below] {\tiny $0$};
\draw(7.5,-1) node[below] {\tiny $2$};
\draw(10.5,-1) node[below] {\tiny $1$};
\draw [->, cyan] (1,-1) -- (1,0);
\draw [->, cyan] (1,1) to [out=90,in=-90] (4.5,4);
\node [above left] at (1,1) {$b_1$};
\draw [->, cyan] (10,-1) to [out=90,in=-90] (8,0);
\draw [->, cyan] (8,1) -- (8,3);
\node [above left] at (8,1) {$b_2$};
\draw [->, cyan] (7,-1) to [out=90,in=-90] (5,0);
\draw [->, cyan] (11,1) to [out=90,in=-90] (10,3);
\node [above right] at (11,1) {$b_3$};
\draw [->, red] (2.5,1) to [out=90,in=180] (4.5,1.25) to [out=0,in=90] (6.5,1);
\draw [<-, red] (2.5,0) to [out=-90,in=180] (4.5,-.25) to [out=0,in=-90] (6.5,0);
\node [above] at (4.5,1.2) {$a_3$};
\draw [->, red] (0,3.5) -- (6,3.5);
\node [left] at (0,3.5) {$a_1$};
\draw [->, red] (0,2) -- (12,2);
\node [left] at (0,2) {$a_2$};

\end{tikzpicture}
 \caption{Cylinder Diagram 3A) with a choice of homology basis}
 \label{Config3ACylDiagsFig}
\end{figure}

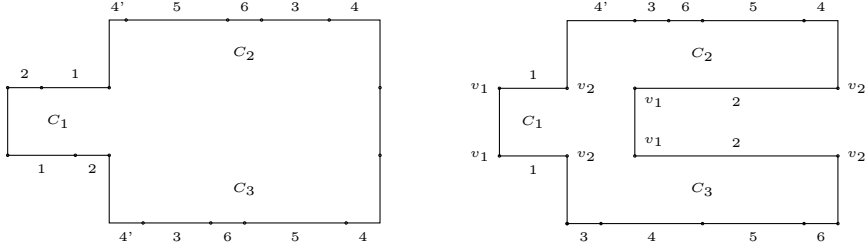
\begin{figure}[h!]
\begin{minipage}[t]{0.24\linewidth}
\centering
\begin{tikzpicture}[scale=0.45]
\draw (0,0)--(0,2)--(-3,2)--(-3,4)--(0,4)--(0,6)--(8,6)--(8,0)--cycle;
\foreach \x in {(0,2),(-1,2),(-3,2),(-3,4),(-2,4),(0,4),(8,4),(8,2)} \draw \x circle (1pt);
\foreach \x in {(1,0),(3,0),(4,0),(7,0)} \draw \x circle (1pt);
\foreach \x in {(.5,6),(3.5,6),(4.5,6),(6.5,6)} \draw \x circle (1pt);
\draw(-1.5,3) node {\tiny $C_1$};
\draw(4,5) node {\tiny $C_2$};
\draw(4,1) node {\tiny $C_3$};
\draw(-2.5,4) node[above] {\tiny 2};
\draw(-1,4) node[above] {\tiny 1};
\draw(-2,2) node[below] {\tiny 1};
\draw(-.5,2) node[below] {\tiny 2};
\draw(.5,0) node[below] {\tiny 4'};
\draw(2,0) node[below] {\tiny 3};
\draw(3.5,0) node[below] {\tiny 6};
\draw(5.5,0) node[below] {\tiny 5};
\draw(7.5,0) node[below] {\tiny 4};
\draw(.25,6) node[above] {\tiny 4'};
\draw(2,6) node[above] {\tiny 5};
\draw(4,6) node[above] {\tiny 6};
\draw(5.5,6) node[above] {\tiny 3};
\draw(7.25,6) node[above] {\tiny 4};
\end{tikzpicture}
\end{minipage}
\begin{minipage}[t]{0.24\linewidth}
\centering
\begin{tikzpicture}[scale=0.30]
\end{tikzpicture}
\end{minipage}
\begin{minipage}[b]{0.24\linewidth}
\centering
\begin{tikzpicture}[scale=0.45]
\draw (0,0)--(0,2)--(-2,2)--(-2,4)--(0,4)--(0,6)--(8,6)--(8,4)--(2,4)--(2,2)--(8,2)--(8,0)--cycle;
\foreach \x in {(0,0),(0,2),(-2,2),(-2,4),(2,2),(2,4),(0,4),(8,4),(8,2)} \draw \x circle (1pt);
\foreach \x in {(0,0),(1,0),(4,0),(7,0),(8,0)} \draw \x circle (1pt);
\foreach \x in {(2,6),(3,6),(4,6),(7,6)} \draw \x circle (1pt);
\draw(-1,3) node {\tiny $C_1$};
\draw(4,5) node {\tiny $C_2$};
\draw(4,1) node {\tiny $C_3$};
\draw(-1,4) node[above] {\tiny 1};
\draw(-1,2) node[below] {\tiny 1};
\draw(5,2) node[above] {\tiny 2};
\draw(5,4) node[below] {\tiny 2};
\draw(.5,0) node[below] {\tiny 3};
\draw(2.5,0) node[below] {\tiny 4};
\draw(5.5,0) node[below] {\tiny 5};
\draw(7.5,0) node[below] {\tiny 6};
\draw(1,6) node[above] {\tiny 4'};
\draw(2.5,6) node[above] {\tiny 3};
\draw(3.5,6) node[above] {\tiny 6};
\draw(5.5,6) node[above] {\tiny 5};
\draw(7.5,6) node[above] {\tiny 4};
\draw(-2,2) node[left] {\tiny $v_1$};
\draw(-2,4) node[left] {\tiny $v_1$};
\draw(2,2) node[above right] {\tiny $v_1$};
\draw(2,4) node[below right] {\tiny $v_1$};
\draw(0,2) node[right] {\tiny $v_2$};
\draw(0,4) node[right] {\tiny $v_2$};
\draw(8,4) node[right] {\tiny $v_2$};
\draw(8,2) node[right] {\tiny $v_2$};
\end{tikzpicture}
\end{minipage}
 \caption{Cylinder Diagrams 3B) (left) and 3C) (right)}
 \label{Config3BCCylDiagsFig}
\end{figure}

\begin{remark}
For all three cylinder diagrams in this configuration in the principal stratum, the arguments of Lemmas \ref{CylDiags3BC} and \ref{CylDiags3A} work even if saddle connection $6$ has zero length, i.e. there is a double zero in place of saddle connection $6$.
\end{remark}

\begin{lemma}
\label{CylDiags3BC}
Let $\mathcal{M}$ be a rank one affine manifold with non-trivial REL and $2$-dimensional Forni subspace.  If $M \in \mathcal{M}$ satisfies Cylinder Diagrams 3B) or 3C), then either there exists $M' \in \cM$ satisfying Configuration 4) or there exists $\cM'$ in a lower dimensional stratum in genus three such that $\cM'$ has a $2$-dimensional Forni subspace.
\end{lemma}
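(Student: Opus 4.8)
The plan is to follow the template of the proof of Lemma~\ref{CylDiag2}. As there, I would first dispose of the easy alternative: if $\text{Twist}(M,\mathcal{M}) \neq \text{Pres}(M,\mathcal{M})$, then Lemma~\ref{WrightLem86} produces a horizontally periodic surface in $\mathcal{M}$ with strictly more than three cylinders, and by Lemma~\ref{Gen3TopConfigs} the only cylinder configuration in genus three with a $2$-dimensional Forni subspace and more than three cylinders is Configuration~4). This yields the first conclusion of the lemma. Hence from now on I assume $\text{Twist}(M,\mathcal{M}) = \text{Pres}(M,\mathcal{M})$, so that by Theorem~\ref{RELDefs} the surface $M$ admits both REL twists and REL stretches. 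In particular the two homologous cylinders $C_2$ and $C_3$ of Configuration~3), which have equal circumference with the top of $C_3$ glued entirely to the bottom of $C_2$, can be sheared against one another while all absolute periods stay fixed.

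The core of the argument is to exploit this REL freedom on the explicit identifications of Diagrams 3B) and 3C). First I would perform a REL twist of $C_2$ relative to $C_3$ to bring a chosen pair of zeros on the shared boundary into vertical alignment, creating a vertical saddle connection $\sigma$ between them; the remark preceding the lemma indicates that the relevant saddle connection is number $6$, and the construction should be arranged so that $\sigma$ joins two \emph{distinct} zeros. I would then apply a REL stretch in the vertical direction to shrink $\sigma$ to a point. Exactly as in Lemma~\ref{CylDiag2}, one checks that under a REL stretch all horizontal saddle connections keep their length and every cycle crossing the height of a surviving cylinder stays bounded away from zero, so that the only object able to degenerate is a vertical saddle connection; since $\sigma$ joins distinct zeros it represents no nonzero absolute homology class, so no curve of positive length is pinched and the genus is preserved. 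The limiting surface $M'$ therefore lies in a strictly lower-dimensional stratum of $\Omega\mathcal{M}_3$, and Lemma~\ref{DetLocusClosed} shows that its orbit closure $\mathcal{M}'$ still carries a $2$-dimensional Forni subspace, giving the second conclusion.

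The remaining possibility is that no such collapse is available, i.e.\ that a REL twist never produces a collapsible saddle connection between distinct zeros; for Diagram 3C) I expect this to be forced by its symmetry, which keeps the two candidate zeros on the shared boundary at equal height. Here I would instead twist the homologous pair $C_2,C_3$ against each other, exactly as in the treatment of Configuration~6) in the proof of Lemma~\ref{3PlusCyls}, to close up transverse trajectories into vertical cylinders whose core curves are the homology classes $b_i$ crossing the heights of $C_1$, $C_2$, and $C_3$. Realizing these core curves on (possibly different) surfaces of $\mathcal{M}$, together with the core curves $a_1,a_2$ already present on $M$, should furnish enough cylinder core curves to meet the hypotheses of Corollary~\ref{HomBasisForniContraCor}, namely either five of the six basis classes or a full Lagrangian $\{b_1,b_2,b_3\}$. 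That corollary then forces $\dim F = 0$, contradicting the standing assumption $\dim F = 2$, and the contradiction shows $\text{Twist}(M,\mathcal{M}) \neq \text{Pres}(M,\mathcal{M})$ after all, returning us to Configuration~4).

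The hard part will be the bookkeeping in the middle step: verifying on the explicit identifications of Diagrams 3B) and 3C) that a REL twist really does bring two \emph{distinct} zeros into vertical alignment, so that the pinched object is a saddle connection and not a core curve, and that exactly one saddle connection collapses under the subsequent REL stretch, so that the limit is a genuine genus-three surface in a lower stratum rather than a lower-genus degeneration. In the last step the analogous difficulty is checking that the transverse cylinders produced by the twist have core curves representing the intended classes $b_i$ and that, together with $a_1,a_2$, they span a subspace large enough to invoke Corollary~\ref{HomBasisForniContraCor}. The equal-circumference constraint on $C_2$ and $C_3$ is precisely what obstructs the simpler ``collapse the shorter cylinder'' move used in Configuration~2), so this transverse construction appears to be essential.
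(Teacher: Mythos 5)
Your opening reduction (if $\text{Twist}(M, \mathcal{M}) \neq \text{Pres}(M, \mathcal{M})$, then Lemma \ref{WrightLem86} together with Lemma \ref{Gen3TopConfigs} yields Configuration 4)) and your main move (REL twist to create a vertical saddle connection, REL stretch to collapse it, conclude via Lemma \ref{DetLocusClosed}) coincide with the paper's proof. But the bookkeeping you defer is not routine, and in one place your plan asserts something false in general. The routine part you omit first: you never identify which cylinder is collapsed or why the collapse is possible. The cycle through $C_1$ joining the two copies of saddle connection $1$ has imaginary period $h_1$, so $h_1$ is frozen under any REL stretch, and since $C_2$ and $C_3$ are homologous, $h_2 + h_3$ is the imaginary part of an absolute period and is likewise frozen; hence only $C_3$ (equivalently $C_2$) can be collapsed, and when it is, $C_2$ persists with limiting height $h_2 + h_3 > 0$. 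Your phrase ``a pair of zeros on the shared boundary'' is also off: the shared boundary of $C_2$ and $C_3$ is a single horizontal circle, and the saddle connection to be collapsed must cross $C_3$ from its bottom to its top. Moreover, the remark about saddle connection $6$ does not single out the saddle connection to collapse --- it only records that the argument tolerates $\ell(6) = 0$.

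The genuine gap is your claim that the construction can be ``arranged so that exactly one saddle connection collapses.'' In Diagram 3C) the top of $C_3$ carries one copy each of two zeros $v_1, v_2$, and a REL twist translates the entire top boundary of $C_3$ rigidly; if, say, both $x(v_2) - x(v_1)$ and the horizontal offset between the two bottom zeros equal half the circumference, then every twist value producing a vertical saddle connection from $v_1$ produces one from $v_2$ as well, and none of your available deformations (horocycle, REL stretch, the twist itself, the diagonal flow) changes these offsets relative to the circumference. So ``exactly one'' cannot always be arranged. The paper accordingly does not claim uniqueness: it allows up to two vertical saddle connections to collapse and shows instead that their union contains no closed curve --- the set $\{v_1, v_2\}$ is disjoint from the set of zeros on the bottom of $C_3$, and since each of $v_1, v_2$ occurs only once on the top, the collapsing locus is a path or wedge $v_1$--$v_3$--$v_2$, never a cycle, so no curve pinches and the limit is a genus-three surface in a lower stratum. (One must also rule out $v_1 = v_2$, which the paper does by noting this would force $\ell(1) = 0$ and place the surface in Diagram 3A).) Your third-paragraph fallback via Corollary \ref{HomBasisForniContraCor} addresses a contingency that does not arise for 3B)/3C) --- in the paper that corollary is reserved for Diagram 3A) and Configuration 4) --- and it would not repair the double-collapse issue in any case. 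Replacing ``exactly one collapses'' with the disjointness-plus-no-closed-curve argument completes your proof along the paper's lines.
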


\begin{remark}
In Cylinder Diagram 3B), the argument goes through even if saddle connection $2$ has zero length.
\end{remark}

\begin{proof}
If $\text{Twist}(M, \mathcal{M}) \not= \text{Pres}(M, \mathcal{M})$, then by Lemma \ref{WrightLem86}, there exists a translation surface $M' \in \cM$ with more cylinders.  However, $\cM$ has a $2$-dimensional Forni subspace, and the only cylinder configuration with more than three cylinders is Configuration 4) by Lemma \ref{Gen3TopConfigs}.

Next assume that $\text{Twist}(M, \mathcal{M}) = \text{Pres}(M, \mathcal{M})$ so that Theorem \ref{RELDefs} implies the existence of a REL stretch and a REL twist.  In Cylinder Diagrams 3B) and 3C), we argue that it is possible to collapse saddle connections with a REL stretch without degenerating the surface so that we can conclude by Lemma \ref{DetLocusClosed}.

Note that in Cylinder Diagrams 3B) and 3C), the cylinder $C_1$ contains a non-trivial element of absolute homology with non-zero period traversing the height of $C_1$ by connecting the two copies of saddle connection $1$.  Since the core curve of any cylinder also represents a non-trivial element of absolute homology with non-zero period, the cylinder $C_1$ is completely fixed by any REL stretch.  Since $C_2$ and $C_3$ are homologous, the observation about $C_1$ implies that the sum of the heights $h_2 + h_3$ represent the imaginary part of an absolute period.

In both cylinder diagrams perform a REL twist so that $C_3$ admits a vertical saddle connection, and REL stretch $C_3$ until it collapses.  Clearly $C_2$ persists under this degeneration because its height converges to the quantity $h_2 + h_3 > 0$.  Let $M'$ be the resulting degenerate surface with two horizontal cylinders.  We claim that no curve pinches in either cylinder diagram under this degeneration.  We follow the usual argument and show that no union of saddle connections forming a closed curve degenerates to a point.  We consider each diagram separately.

\

\noindent \textit{Cylinder Diagram 3B):}  Observe that all horizontal saddle connections remain fixed under this degeneration.  Furthermore, all saddle connections traversing the height of $C_1$ or $C_2$ have length bounded away from zero because neither of the heights of $C_1$ nor $C_2$ converge to zero.  Hence, only a saddle connection entirely contained in $C_3$ can converge to zero length.  Moreover, since all horizontal (real) components of periods are fixed under a REL stretch, only a vertical saddle connection contained in $C_3$ can converge to zero length.  By construction, exactly one such saddle connection does.  We leave it to the reader to check that the set of zeros contained in the top of $C_3$ is disjoint from the set of zeros contained in the bottom of $C_3$ from which the claim follows that the degeneration yields a translation surface of genus three in a lower dimensional stratum.

\

\noindent \textit{Cylinder Diagram 3C):} This case is slightly more complicated due to the fact that there can be up to two vertical saddle connections that collapse after collapsing the cylinder $C_3$.  Observe that the zeros $v_1$ and $v_2$ in Cylinder Diagram 3C) must always be simple and distinct.  (If they were not distinct, saddle connection 1 would have zero length, and this case will be addressed via Cylinder Diagram 3A) below.)  Following the same argument above, it suffices to analyze the vertical saddle connections that are entirely contained in $C_3$.

Observe that the set of zeros on the top of $C_3$, $\{v_1, v_2\}$, is disjoint from the set of zeros on the bottom of $C_3$.  The zeros on the bottom of $C_3$ either consists of two simple zeros, or one double zero (when saddle connection 6 has zero length).  Let $v_3$ be a zero on the bottom of $C_3$.  Since there is only one copy of each of $v_1$ and $v_2$ on the top of $C_3$, there can only be at most one vertical saddle connection emanating from each of them that is contained in $C_3$.  Hence, even if there are two vertical saddle connections from $v_1$ to $v_3$ and $v_2$ to $v_3$, the union of these two saddle connections cannot represent a closed curve.  The conclusion is even clearer in the case of a single vertical saddle connection, or in the case that one of the copies of $v_3$ is replaced by a fourth simple zero $v_4$.  Thus, the proof is complete.
\end{proof}

\begin{lemma}
\label{CylDiags3A}
Let $\mathcal{M}$ be a rank one affine manifold with non-trivial REL and $2$-dimensional Forni subspace.  If $M \in \mathcal{M}$ satisfies Cylinder Diagram 3A), then there exists $M' \in \cM$ satisfying Configuration 4).
\end{lemma}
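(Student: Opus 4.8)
The plan is to run the same dichotomy on $\text{Twist}(M,\cM)$ versus $\text{Pres}(M,\cM)$ that drives Lemmas \ref{CylDiag2} and \ref{CylDiags3BC}, but to show that for Cylinder Diagram 3A) the equality branch is simply impossible, so that only the strict-inequality branch survives and it delivers Configuration 4). First I would observe that if $\text{Twist}(M,\cM) \neq \text{Pres}(M,\cM)$, then Lemma \ref{WrightLem86} produces a horizontally periodic surface in $\cM$ with at least four cylinders, and since $\dim F = 2$ the only cylinder configuration with more than three cylinders is Configuration 4) by Lemma \ref{Gen3TopConfigs}; this gives the desired $M'$ immediately. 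Thus the whole content of the lemma is to rule out the possibility that $\text{Twist}(M,\cM) = \text{Pres}(M,\cM)$.

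So suppose $\text{Twist}(M,\cM) = \text{Pres}(M,\cM)$. Then Theorem \ref{RELDefs} makes a REL twist and a REL stretch available on $M$, and Theorem \ref{QuadImpCP} guarantees complete periodicity, so every direction containing a cylinder decomposes fully. Using the homology basis $\{a_1,a_2,a_3,b_1,b_2,b_3\}$ drawn in Figure \ref{Config3ACylDiagsFig}, note that $a_1$ is already the core curve of $C_1$ and $a_2$ that of the homologous pair $C_2, C_3$. The key geometric point, and the reason 3A) behaves differently from 3B) and 3C), is that here a REL-stretch collapse would pinch a homologically nontrivial curve rather than merely merging distinct zeros, so the lower-stratum escape of Lemma \ref{CylDiags3BC} via Lemma \ref{DetLocusClosed} is unavailable. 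Instead I would use REL twists to shear the cylinders against one another until the transverse saddle connections traced by $b_1$, $b_2$, and $b_3$ align vertically; by complete periodicity each resulting vertical direction decomposes into cylinders, realizing $b_1$, $b_2$, $b_3$ as core curves of cylinders on (possibly distinct) surfaces $M_{b_i} \in \cM$.

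Since $\{b_1,b_2,b_3\}$ is the Lagrangian half of the symplectic basis dual to $\{a_1,a_2,a_3\}$, the second hypothesis of Corollary \ref{HomBasisForniContraCor} is then satisfied (and in fact $a_1, a_2, b_1, b_2, b_3$ are all realized, so the first is too), forcing $\dim F = 0$ and contradicting $\dim F = 2$. Hence $\text{Twist}(M,\cM) = \text{Pres}(M,\cM)$ is impossible, so $\text{Twist}(M,\cM) \neq \text{Pres}(M,\cM)$ and Configuration 4) is produced. I expect the main obstacle to be the explicit realization of $b_1$, $b_2$, $b_3$ as core curves: one must track the saddle-connection identifications of Figure \ref{Config3ACylDiagsFig} under REL twists, using the rectangle-and-prime convention to detect when the transverse trajectories close up, verify that the core curves of the new vertical cylinders are genuinely the classes $b_i$ rather than some other combination, and confirm that the deformed surfaces remain in $\cM$ with all cylinders persisting. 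Everything downstream is a direct application of Lemma \ref{ForniVanCyls} packaged into Corollary \ref{HomBasisForniContraCor}.
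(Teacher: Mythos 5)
Your overall architecture coincides with the paper's: the $\text{Twist}(M,\cM) \not= \text{Pres}(M,\cM)$ branch is handled identically via Lemmas \ref{WrightLem86} and \ref{Gen3TopConfigs}, and in the equality branch the paper, like you, realizes $b_1, b_2, b_3$ as core curves of cylinders on possibly distinct surfaces in the REL leaf and concludes by Corollary \ref{HomBasisForniContraCor}. But the step you defer as ``the main obstacle'' is the entire content of the lemma, and the mechanism you propose for it --- shear by REL twists until the transverse saddle connections ``align vertically,'' then invoke complete periodicity --- does not work as stated. Complete periodicity is beside the point: a closed regular trajectory is automatically contained in a cylinder, and the real issue is producing a closed trajectory in the prescribed class $b_i$ at all. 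Since $\cM$ may be only $3$-dimensional, the REL twist space at $M$ can be $1$-dimensional, so you have essentially one relative shear (plus the horocycle flow) at your disposal, while closing up, say, $b_3$ imposes two independent alignment conditions: saddle connection $4$ must lie over $2$ inside $C_3$, \emph{and} the continuation through $C_2$ must return to its starting point on $2$. Nothing in your argument shows these conditions can be met simultaneously, and for generic cylinder heights they cannot be met by twisting alone.

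The paper's proof supplies exactly the missing device. It first observes, from the imaginary parts of the absolute periods of $b_1$ and $b_2$ (equal to $h_1+h_2+h_3$ and $h_2+h_3$), that $h_1$ is fixed under any REL stretch; one may therefore REL stretch until $h_3 = \delta$ is negligible. At $\delta = 0$ the cylinders $C_2$ and $C_3$ merge into a single cylinder, each closing condition degenerates to one that the available twists can satisfy, and the resulting cylinders persist for small $\delta > 0$ because being a cylinder is an open condition; crucially the argument stays near but not at the boundary, since actually collapsing $C_3$ may drop the genus --- consistent with your (correct) remark that the lower-stratum escape of Lemma \ref{CylDiags3BC} is unavailable here. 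Separately, your appeal to the second bullet of Corollary \ref{HomBasisForniContraCor} requires verifying $I(b_i, b_j) = 0$, which is not automatic: the paper's footnote on the zero-length variant of saddle connection $2$ shows $b_2$ and $b_3$ can be forced to intersect, which is why having $a_1, a_2$ realized as core curves of horizontal cylinders on $M$ itself and using the first bullet is the safer route. In short: right strategy, but the explicit realization of the $b_i$ --- precisely where you stop --- is where the proof actually lives, and your twist-only mechanism leaves a genuine gap there.
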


\begin{remark}
In Cylinder Diagram 3A), the following proof works just as well if saddle connection $2$ has zero length, and the details will be given in a footnote at the end of the proof.
\end{remark}

\begin{proof}
If $\text{Twist}(M, \mathcal{M}) \not= \text{Pres}(M, \mathcal{M})$, then by Lemma \ref{WrightLem86}, there exists a translation surface $M' \in \cM$ with more cylinders.  However, $\cM$ has a $2$-dimensional Forni subspace, and the only cylinder configuration with more than three cylinders is Configuration 4) by Lemma \ref{Gen3TopConfigs}.

Assume that $\text{Twist}(M, \mathcal{M}) = \text{Pres}(M, \mathcal{M})$.  Following the proof of Lemma \ref{CylDiag2}, we find a basis for homology that allows us to apply Corollary \ref{HomBasisForniContraCor}.  One difference here that was not used in Lemma \ref{CylDiag2} is that it was possible in Configuration 2) to realize all of the desired properties of the basis on a single translation surface.  However, in this lemma, we will need the full power of Corollary \ref{HomBasisForniContraCor} to argue that it suffices to find a translation surface in the REL leaf for each absolute homology curve such that the curve can be realized as the core curve of a cylinder.

Through a combination of \splin ~and REL deformations, we can arrange the cylinder diagram as in Figure \ref{Config3ACylDiagsFig}.  Let $C_i$ have height $h_i$.  Observe the absolute homology cycle $b_1$ in Figure \ref{Config3ACylDiagsFig}, which has imaginary part of its (absolute) period equal to $h_1 + h_2 + h_3$.  Furthermore, the existence of the absolute homology cycle $b_2$, which has imaginary part of its (absolute) period equal to $h_2 + h_3$ shows that $h_1$ is fixed under any REL stretch.

Therefore, we can REL stretch so that $C_3$ has height $\delta > 0$, where $\delta$ is much smaller than the shortest horizontal saddle connection on $C_3$, and $C_1$ remains fixed.  All of the claims below can be verified by letting $\delta$ go to zero, finding the trajectory in $C_2 \cup C_3$ (which becomes a single cylinder when $\delta = 0$) and then observing that after a possible small perturbation to the trajectory, it will persist for sufficiently small $\delta > 0$.  Unfortunately, it is not sufficient to collapse $C_3$ in this argument because it may result in a surface of lower genus.  Hence, the arguments below are made near the boundary of $\cM$, but \emph{not} at the boundary of $\cM$.

We follow the notation of Corollary \ref{HomBasisForniContraCor}, and let $M_{\gamma}$ denote a translation surface for which $\gamma$ is the core curve of a cylinder.  First, REL twist the cylinders $C_2$ and $C_3$ to produce the surface $M_{b_1}$ so that a closed curve corresponding to $b_1$ can pass through saddle connection $3$ and travel from $0$ to itself.  Secondly, the same can be done with saddle connection $5$ so that $b_2$ can pass through and connect saddle connection $1$ to itself, which yields $M_{b_2}$.  Finally, we can realize $M_{b_3}$ by performing a REL twist so that saddle connection $4$ lies directly above saddle connection $2$ in $C_3$.\footnote{The only significant difference in this proof when $2$ has length zero is that $b_3$ is forced to pass through saddle connection $1$ and intersect $b_2$.  In spite of this intersection, $b_2$ and $b_3$ represent linearly independent absolute homology curves because $I(b_2, a_3) = 0$ and $I(b_3, a_3) \not= 0$.}  Since the height $\delta$ of $C_3$ is negligible, connecting $2$ to itself by passing through $4$ is easy.  Again, this can be seen by letting $\delta = 0$, connecting subset of $4$ corresponding to $2$ to the copy of $2$ on the top of $C_2$, and noting that this cylinder will persist for sufficiently small $\delta > 0$.  Applying Corollary \ref{HomBasisForniContraCor}, yields the desired contradiction and proves $\text{Twist}(M, \mathcal{M}) \not = \text{Pres}(M, \mathcal{M})$.
\end{proof}

\subsection{Configuration 4)}

The final case to consider in this section is Configuration 4).  Though this case admits only two cylinder diagrams in the principal stratum and two cylinder diagrams in $\cH(2,1,1)$, a new problem arises from the more complicated admissible relations among the core curves of the cylinders in one of the cylinder diagrams.  Our assumptions only tell us that REL is positive dimensional, so we cannot guarantee more than one dimension.  Several homological relations are possible and we do not know which ones can be varied by moving in REL.  Therefore, we have to exclude all possibilities.

\begin{remark}
The reader may check that if a cylinder diagram in genus three satisfies Configuration 4), then it must lie in
$$\cH(2,1,1) \cup \cH(1,1,1,1).$$
Throughout this section, we focus on the principal stratum.  Every argument below is independent of the length of saddle connection $6$, which we permit to have zero length.
\end{remark}

\begin{figure}[ht]
\begin{minipage}[b]{0.24\linewidth}
\centering
\begin{tikzpicture}[scale=0.50]
\draw (0,0)--(0,4)--(2,4)--(2,2)--(2.5,4)--(5.5,4)--(5,2)--(5,0)--(8,0)--(8,-2)--(3,-2)--(3,0)--cycle;
\foreach \x in {(0,0),(0,2),(2,2),(0,4),(2,4),(2.5,4),(5.5,4),(5,2),(5,0),(8,0),(8,-2),(5,-2),(3,-2),(3,0)} \draw \x circle (1pt);
\foreach \x in {(1,0),(2,0),(6,0),(7,0)} \draw \x circle (1pt);
\draw(2.5,1) node {\tiny $C_3$};
\draw(5.5,-1) node {\tiny $C_4$};
\draw(1,3) node {\tiny $C_1$};
\draw(4,3) node {\tiny $C_2$};
\draw(1,4) node[above] {\tiny 0};
\draw(4,4) node[above] {\tiny 1};
\draw(4,-2) node[below] {\tiny 0};
\draw(6.5,-2) node[below] {\tiny 1};
\draw(.5,0) node[below] {\tiny 3};
\draw(1.5,0) node[below] {\tiny 4};
\draw(2.5,0) node[below] {\tiny 5};
\draw(5.5,0) node[above] {\tiny 5};
\draw(6.5,0) node[above] {\tiny 4};
\draw(7.5,0) node[above] {\tiny 3};
\end{tikzpicture}
\end{minipage}
\begin{minipage}[t]{0.24\linewidth}
\centering
\begin{tikzpicture}[scale=0.30]
\end{tikzpicture}
\end{minipage}
\begin{minipage}[b]{0.24\linewidth}
\centering
\begin{tikzpicture}[scale=0.50]
\draw (0,0)--(0,2)--(-0.5,4)--(1.5,4)--(2,2)--(2,4)--(10,4)--(10,-2)--(2,-2)--(2,0)--cycle;
\foreach \x in {(0,2),(2,2),(10,2),(0,0),(2,0),(10,0),(-0.5,4),(1.5,4),(2,4),(4,4),(6,4),(8,4),(10,4),(10,-2),(8,-2),(6,-2),(4,-2),(2,-2)} \draw \x circle (1pt);
\draw(5,1) node {\tiny $C_2$};
\draw(6,-1) node {\tiny $C_4$};
\draw(0.5,3) node {\tiny $C_1$};
\draw(6,3) node {\tiny $C_3$};
\draw(0.5,4) node[above] {\tiny 0};
\draw(3,4) node[above] {\tiny 6};
\draw(5,4) node[above] {\tiny 5};
\draw(7,4) node[above] {\tiny 4};
\draw(9,4) node[above] {\tiny 3};
\draw(1,0) node[below] {\tiny 0};
\draw(3,-2) node[below] {\tiny 3};
\draw(5,-2) node[below] {\tiny 4};
\draw(7,-2) node[below] {\tiny 5};
\draw(9,-2) node[below] {\tiny 6};
\end{tikzpicture}
\end{minipage}
 \caption{Cylinder Diagrams 4A) (left) and 4B) (right)}
 \label{Config4CylDiagFig}
\end{figure}
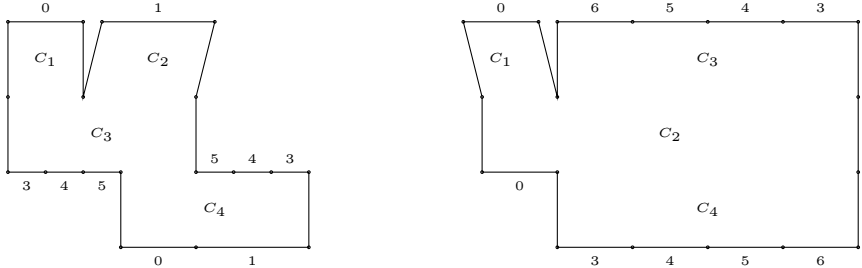

\begin{lemma}
\label{CylDiags4}
There are two $4$-cylinder diagrams satisfying Configuration 4) up to reflection, and they are depicted in Figure \ref{Config4CylDiagFig}.
\end{lemma}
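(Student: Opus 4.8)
The plan is to reduce the classification to a short combinatorial enumeration of how the four cylinders can attach, by their tops and bottoms, to the three parts of the degenerate surface, leaning on the rigidity of the local pictures already isolated for Configurations 2) and 3). First I would record the skeleton forced by Lemma \ref{Gen3TopConfigs}: pinching the four core curves produces one torus $T$ and two spheres $S_1, S_2$, each sphere carrying exactly three simple poles and one simple zero, while $T$ carries exactly two simple poles. Counting half-cylinder ends ($2\cdot 4 = 8$, with three on each sphere) leaves exactly two on $T$, so the dual graph is forced: a single cylinder $\alpha$ joins $T$ to $S_1$, a single cylinder $\beta$ joins $T$ to $S_2$, and two cylinders $\gamma_1,\gamma_2$ form a double edge joining $S_1$ to $S_2$. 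I will call a half-cylinder end a \emph{top} or a \emph{bottom} according to whether the top or the bottom boundary of that cylinder lies in the given part; since a simple pole has residue $\pm(\text{circumference})$ and the residues on each part sum to zero, every part must receive both signs.

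Next I would invoke the rigidity of the two local models. By the analysis of Configuration 2) there is a unique way to assemble three half-cylinders into a sphere with a single simple zero, and up to reflection its sign pattern is two tops and one bottom. Exactly as in Configuration 3), the torus $T$ is obtained by gluing the top of one of its two adjacent cylinders entirely to the bottom of the other, a pattern that is rigid because there is a unique $1$-cylinder diagram in $\cH(1,1)$. Fixing the reflection so that $\alpha$ contributes its top to $T$ and $\beta$ contributes its bottom, I obtain that $S_1$ receives the bottom of $\alpha$ (a bottom) and $S_2$ receives the top of $\beta$ (a top); moreover each $\gamma_i$ necessarily contributes opposite signs to $S_1$ and to $S_2$, since one boundary lies in each sphere.

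The enumeration then reduces to recording, for each of $\gamma_1,\gamma_2$, whether its $S_1$-end is a top or a bottom, subject to the requirement that both spheres realize the sign pattern (two of one sign, one of the other). Writing the $S_1$-signs of $(\gamma_1,\gamma_2)$ as a pair: the value $(\text{bottom},\text{bottom})$ forces $S_1=\{\text{bottom},\text{bottom},\text{bottom}\}$ and is excluded, whereas $(\text{top},\text{top})$ and $(\text{top},\text{bottom})$ each leave both spheres admissible, and $(\text{bottom},\text{top})$ coincides with $(\text{top},\text{bottom})$ after swapping $\gamma_1\leftrightarrow\gamma_2$. This leaves exactly two admissible patterns, which by the rigidity of the local models determine the full gluings and yield precisely Cylinder Diagrams 4A) and 4B) of Figure \ref{Config4CylDiagFig}. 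I would finish by checking that the two are genuinely distinct up to reflection via the invariant ``do the two double-edge cylinders carry the same sign in $S_1$ or opposite signs'': this is the $(\text{top},\text{top})$ case in 4A) versus the $(\text{top},\text{bottom})$ case in 4B), and it is preserved both by reflection (which flips all signs) and by relabeling $S_1\leftrightarrow S_2$ (which replaces each $S_1$-sign by its $S_2$-sign). The zero count confirms the stratum: one simple zero per sphere plus two simple zeros on the torus gives the four simple zeros of $\cH(1,1,1,1)$.

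The step I expect to be the main obstacle is the rigidity claim: upgrading the coarse top/bottom sign pattern into the statement that the \emph{entire} cylinder diagram, including the number and cyclic arrangement of the saddle connections on each boundary, is determined up to reflection. This requires applying the uniqueness of the sphere-with-one-simple-zero and of the $\cH(1,1)$ torus with care, and keeping precise track of the reflection and the $S_1\leftrightarrow S_2$ relabeling symmetries, so that neither of the two surviving diagrams is counted twice and no third diagram is overlooked.
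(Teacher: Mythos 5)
Your proof is correct, and it reaches the classification by a genuinely different route than the paper. The paper's argument is a cut-and-merge surgery: pretend the two homologous torus-adjacent cylinders are a single cylinder, recognize the resulting surface as the unique $3$-cylinder diagram in $\cH(1,1)$, and then count the essentially distinct ways to reverse the surgery --- one may split either the large cylinder or one of the two interchangeable simple cylinders, yielding exactly Diagrams 4A) and 4B) of Figure \ref{Config4CylDiagFig}. You instead enumerate directly on the signed dual graph of the degenerate surface: taking the skeleton from Lemma \ref{Gen3TopConfigs}, you classify the top/bottom (residue-sign) assignments of the half-cylinder ends at each part, use the residue theorem to force mixed signs on every part, and then invoke the same two local rigidity facts the paper relies on (the unique assembly of three half-cylinders around a simple zero from the Configuration 2) discussion, and the unique torus seam coming from the $1$-cylinder diagram in $\cH(1,1)$ as in Configuration 3)) to upgrade each admissible sign pattern to a full diagram. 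Your two surviving patterns --- the double-edge cylinders carrying equal versus opposite signs at a fixed sphere --- correspond precisely to splitting the large versus a simple cylinder in the paper's surgery, and your explicit invariant makes the ``no double count, no omission'' bookkeeping more transparent than the paper's parenthetical remark that the two simple cylinders are indistinguishable. Two small observations: the degree count $(3,3,2)$ alone does not force the dual graph (a graph with a loop at a sphere, e.g.\ edges $T$--$S_1$, $T$--$S_2$ and loops at each sphere, has the same degrees and total genus three), so either the appeal to Lemma \ref{Gen3TopConfigs} or, more elegantly, your own residue observation is genuinely needed --- a loop at a degree-three sphere would force the third pole to have zero residue, which is impossible since residues equal circumferences up to sign; and the rigidity upgrade you flag as the main obstacle is exactly the step the paper also asserts without further elaboration, so your argument sits at the same level of rigor as the published proof while making the symmetry analysis (reflection and relabeling of the spheres) explicit.
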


\begin{proof}
As noted above, there is exactly one way to identify two cylinders each having two simple zeros on their boundary so that the resulting identification is a torus.  This is because there is one $1$-cylinder diagram in $\mathcal{H}(1,1)$.  Furthermore, there is exactly one way to identify three cylinders with a single simple zero lying between them, and this corresponds to the unique $3$-cylinder diagram in $\mathcal{H}(1,1)$.  There are two choices for cylinders in the $3$-cylinder diagram in $\mathcal{H}(1,1)$ that can be cut and replaced by two homologous cylinders with a torus lying in between.  The two choices correspond to the two cylinder diagrams in Figure \ref{Config4CylDiagFig}.  (We say that there are two choices instead of three because the simple cylinders in the $3$-cylinder diagram in $\mathcal{H}(1,1)$ are indistinguishable for our purposes here.)
\end{proof}

\begin{lemma}
\label{Config4TwEqPr}
If $\mathcal{M}$ is an orbit closure in genus three with $2$-dimensional Forni subspace and $M \in \cM$ satisfies Configuration 4), then
$$\text{Twist}(M, \mathcal{M}) = \text{Pres}(M, \mathcal{M}).$$
\end{lemma}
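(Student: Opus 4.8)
The plan is to argue by contradiction, leveraging the dichotomy of Lemma \ref{WrightLem86} against the classification of cylinder configurations in Lemma \ref{Gen3TopConfigs}. Since the inclusion $\text{Twist}(M, \mathcal{M}) \subseteq \text{Pres}(M, \mathcal{M})$ holds automatically, it suffices to rule out the strict inclusion and thereby force equality.

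First I would suppose for contradiction that $\text{Twist}(M, \mathcal{M}) \not= \text{Pres}(M, \mathcal{M})$. Then Lemma \ref{WrightLem86} produces a horizontally periodic translation surface $M' \in \mathcal{M}$ with strictly more horizontal cylinders than $M$. Because $M$ satisfies Configuration 4), it decomposes into exactly four horizontal cylinders, so $M'$ must possess at least five horizontal cylinders.

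Next I would observe that, since $M' \in \mathcal{M}$ and the Forni subspace is constant along the orbit closure by Theorem \ref{ForniConst}, the surface $M'$ again carries a $2$-dimensional Forni subspace. Hence $M'$ is subject to the classification of Lemma \ref{Gen3TopConfigs}: every horizontally periodic surface in a genus three orbit closure with $\dim F = 2$ falls into one of Configurations 1) through 6), and the greatest number of horizontal cylinders occurring among these is four, attained only by Configuration 4). This contradicts the existence of a surface $M'$ with at least five cylinders, and the contradiction yields $\text{Twist}(M, \mathcal{M}) = \text{Pres}(M, \mathcal{M})$.

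The argument carries essentially no obstacle beyond the correct bookkeeping of hypotheses. The one point requiring genuine care is invoking the constancy of the Forni dimension along $\mathcal{M}$, which is precisely what guarantees that the surface $M'$ furnished by Lemma \ref{WrightLem86} remains governed by the configuration classification rather than escaping it. The remainder is simply the observation that four is the maximal cylinder count appearing in Lemma \ref{Gen3TopConfigs}, so producing any surface with more cylinders is impossible under the standing assumption $\dim F = 2$.
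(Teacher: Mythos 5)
Your proof is correct and follows essentially the same route as the paper: the paper's one-line argument is exactly that Configuration 4) realizes the maximal number of cylinders (four) permitted by Lemma \ref{Gen3TopConfigs}, so Lemma \ref{WrightLem86} forbids $\text{Twist}(M,\mathcal{M}) \neq \text{Pres}(M,\mathcal{M})$. Your only addition is making explicit the (correct) appeal to Theorem \ref{ForniConst} that the Forni subspace is constant along the orbit closure, so the new surface $M'$ remains subject to the configuration classification.
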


\begin{proof}
In Configuration 4), we have achieved the maximum number of cylinders possible in an affine manifold in genus three with non-trivial Forni subspace by Lemma \ref{Gen3TopConfigs}, thus the lemma follows.
\end{proof}

\

\noindent \textbf{Cylinder Diagram 4B)}:  This case is simpler because the desired absolute homology basis can be seen directly without using REL deformations.

\begin{figure}[ht]
\centering
\begin{tikzpicture}[scale=0.50]
\draw (0,0)--(0,5)--(1.9,5)--(2,2)--(2.1,3)--(10,3)--(10,-2)--(2,-2)--(2,0)--cycle;
\foreach \x in {(1,5),(10,2),(0,2),(2,2),(0,0),(2,0),(10,0)} \draw \x circle (1pt);
\foreach \x in {(2.1,3),(4,3),(7,3),(9,3),(10,3),(8,-2),(6,-2),(5,-2),(3,-2)} \draw \x circle (1pt);
\draw(1.5,5) node[above] {\tiny 0'};
\draw(0.5,5) node[above] {\tiny 0};
\draw(1,0) node[below] {\tiny 0};
\draw(3,3) node[above] {\tiny 6};
\draw(5.5,3) node[above] {\tiny 5};
\draw(8,3) node[above] {\tiny 4};
\draw(9.5,3) node[above] {\tiny 3};
\draw(9,-2) node[below] {\tiny 5};
\draw(7,-2) node[below] {\tiny 4};
\draw(5.5,-2) node[below] {\tiny 3};
\draw(4,-2) node[below] {\tiny 6};
\draw(2.5,-2) node[below] {\tiny 5'};

\draw [->, cyan] (.1,0) to [out=90,in=-90] (1.1,5);
\node [above] at (1,0) {$b_1$};
\draw [->, cyan] (5.5,-2) to [out=90,in=-90] (9.5,3);
\node [above right] at (9,1) {$b_2$};
\draw [->, cyan] (7,-2) to [out=90,in=-90] (8,3);
\node [above left] at (8,1) {$b_3$};

\draw [->, red] (5.9,-2) to [out=90,in=90] (8.1,-2);
\draw [<-, red] (9.9,3) to [out=-90,in=180] (10,2.5);
\draw [<-, red] (2.05,2.5) to [out=0,in=-90] (4.1,3);
\node [above right] at (8.1,-2) {$a_3$};

\draw [->, red] (0,3.5) -- (1.95,3.5);
\node [left] at (0,3.5) {$a_1$};
\draw [->, red] (2,-1) -- (10,-1);
\node [right] at (10,-1) {$a_2$};

\end{tikzpicture}
 \caption{Choice of absolute homology basis for Cylinder Diagram 4B)}
 \label{Config4BHomBasisFig}
\end{figure}
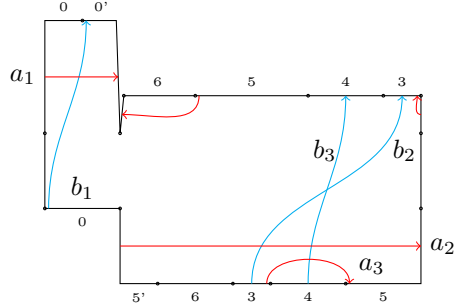

\begin{lemma}
\label{CylDiags4B}
If $\mathcal{M}$ is a rank one orbit closure with $2$-dimensional Forni subspace and non-trivial REL, then it does not contain a translation surface with Cylinder Diagram 4B).
\end{lemma}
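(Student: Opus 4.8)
The plan is to argue by contradiction: assuming $\mathcal{M}$ contains a surface $M$ realizing Cylinder Diagram 4B), I will exhibit an absolute homology basis five of whose members occur as core curves of cylinders on surfaces in $\mathcal{M}$, and then invoke the first criterion of Corollary \ref{HomBasisForniContraCor} to force $\dim F = 0$, contradicting the standing hypothesis that the Forni subspace is $2$-dimensional. Since Lemma \ref{Config4TwEqPr} already tells us $\text{Twist}(M,\mathcal{M}) = \text{Pres}(M,\mathcal{M})$ in this configuration, there is no hope of producing more cylinders, so the entire argument must be carried out on the four cylinders $C_1, C_2, C_3, C_4$ already present.

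First I would fix the symplectic basis $\{a_1, a_2, a_3, b_1, b_2, b_3\}$ drawn in Figure \ref{Config4BHomBasisFig} and verify directly from the diagram that it is symplectic by computing $I(a_i, b_j) = \delta_{ij}$ and $I(a_i, a_j) = I(b_i, b_j) = 0$. By inspection $a_1$ and $a_2$ are the core curves of the horizontal cylinders $C_1$ and $C_4$, so they are already core curves on $M$ itself. Note that $a_3$ is the one basis vector I deliberately do not try to realize as a core curve: the core curves of the horizontal cylinders span only a two-dimensional subspace of $H_1(X,\mathbb{R})$ (consistent with the degeneration in Configuration 4 to a torus and two spheres), so at most two of them are independent, and this is precisely why the first criterion of Corollary \ref{HomBasisForniContraCor} asks only for $\{a_1, a_2, b_1, b_2, b_3\}$.

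The heart of the argument is to realize each of $b_1, b_2, b_3$ as the core curve of a cylinder on some surface $M_{b_i} \in \mathcal{M}$. This is where Cylinder Diagram 4B) is genuinely easier than 4A): whereas Lemma \ref{CylDiags3A} needed a succession of REL twists to slide saddle connections into place, here the requisite closed trajectories are visible directly, and no REL deformation is needed. The curve $b_1$ crosses only the simple cylinder $C_1$, so a transverse trajectory entering the interior of its single bottom saddle connection returns to the identified top saddle connection and sweeps out a cylinder with core $b_1$; the curves $b_2$ and $b_3$ cross the cylinders in the right half of the diagram and are exhibited as closed regular geodesics in a transverse direction, after applying an element of \splin ~if necessary to align the relevant saddle connections. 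In each case complete periodicity (Theorem \ref{QuadImpCP}) guarantees that such a closed regular geodesic is the core curve of a genuine cylinder, so each $b_i$ is a cylinder core on a surface in $\mathcal{M}$.

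With $a_1, a_2, b_1, b_2, b_3$ all realized as cylinder cores, Corollary \ref{HomBasisForniContraCor} yields $\dim F = 0$, contradicting $\dim F = 2$; hence $\mathcal{M}$ contains no surface with Cylinder Diagram 4B). I expect the main obstacle to be the verification in the previous paragraph that each $b_i$ genuinely bounds a cylinder, namely that the transverse trajectory closes up without meeting a zero and that its homology class is exactly the prescribed $b_i$ rather than a class differing from it by a core curve of one of $C_1,\dots,C_4$. Checking the local picture at each saddle connection the trajectory traverses, together with the observation that the horizontal core curves span only a two-dimensional subspace, should pin the homology class down unambiguously and complete the proof.
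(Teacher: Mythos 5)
Your proposal is correct and follows the paper's own proof essentially step for step: the same basis from Figure \ref{Config4BHomBasisFig}, with $a_1, a_2$ realized as core curves of $C_1$ and $C_4$ on $M$ itself, $b_1, b_2, b_3$ realized as core curves of transverse cylinders after a horocycle shear (the paper aligns the bottom of $C_1$ over the copy of $0$ on the bottom of $C_2$, and the bottom of $C_3$ over the top of $C_4$, so that all the needed trajectories close up), and the first criterion of Corollary \ref{HomBasisForniContraCor} to force $\dim F = 0$. One minor slip worth fixing: $b_1$ cannot cross only $C_1$ — the top of $C_1$ (saddle connection $0$) is glued to the bottom of $C_2$, not to the bottom of $C_1$, so the closed trajectory realizing $b_1$ necessarily traverses $C_2$ as well (homologically this is forced, since $\gamma_2 = \gamma_1 + \gamma_3$ gives $I(b_1,\gamma_2) = I(b_1,\gamma_1) \neq 0$), which is precisely why the shear described above is needed before the trajectory is ``immediate.''
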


\begin{proof}
We find a basis for homology that allows us to apply Corollary \ref{HomBasisForniContraCor}.  We claim that the cylinder diagram in this case can be depicted as in Figure \ref{Config4BHomBasisFig}.  To see this, act on the surface by the horocycle flow so that the saddle connection on the bottom of $C_1$ lies over the copy of $0$ on the bottom of $C_2$, and the saddle connection on the bottom of $C_3$ lies over the saddle connection on the top of $C_4$.  No assumptions are made about the saddle connections in the top and bottom of $C_3$ and $C_4$, respectively.  Consider the $a_i$ as depicted in Figure \ref{Config4BHomBasisFig}.  The existence of the closed trajectory forming $b_1$ is immediate.  Furthermore, every saddle connection $3, 4, 5, 6$ on the top of $C_3$ is visible from its copy on the bottom of $C_4$.  This implies that $b_2$ and $b_3$ can be chosen so that they are core curves of cylinders as well.  Having satisfied all of the assumptions of Corollary \ref{HomBasisForniContraCor}, we can apply it to complete this proof.
\end{proof}

\

\noindent \textbf{Cylinder Diagram 4A)}:  We consider the possible deformations that can occur when we perform a REL stretch.  Let $h_i$ denote the height of the cylinder $C_i$.  Observe that there exist curves representing absolute homology cycles $b_1$, $b_2$ passing from the bottom of $C_4$, through $C_3$, and into $C_1$ and $C_2$, respectively, before closing.  The imaginary part of the (absolute) period of $b_i$ is given by $h_i + h_3 + h_4$, for $i = 1,2$.  Since we will consider REL deformations, this quantity must remain fixed.  Let $h_i + h_3 + h_4 = H_i$, for $H_1, H_2 > 0$.  In particular, we have $h_1 - h_2 = H_1 - H_2$, which implies that $h_2$ is completely determined by $h_1$, $h_3$, and $h_4$ along any REL deformation.

We focus on the equation
$$h_1 + h_3 + h_4 = H_1.$$
The complicating issue for this cylinder diagram is the fact that $\cM$ may be $3$-dimensional, i.e. have one relative dimension.  When $\cM$ is $3$-dimensional, there is an additional unknown equation relating the quantities $h_1, h_3, h_4$.  By the definition of an affine manifold, such an equation has the form
$$\alpha_1 h_1 + \alpha_3 h_3 + \alpha_4 h_4 = H$$
for some $\alpha_1, \alpha_3, \alpha_4, H \in \bR$.  The strategy of this section is to exhaustively study every possible equation.  The three cases we consider are as follows.  Either
\begin{itemize}
\item (L1) the heights $h_1$ and $h_2$ are fixed under the REL stretch, i.e. there exists $H > 0$ such that
$$h_1 = H,$$
\item (L2) the height $h_3$ or $h_4$ (but not both) is fixed under a REL stretch, i.e. there exists $H > 0$ such that
$$h_3 = H \text{ or } h_4 = H, \text{ or}$$
\item (L3) no height of any cylinder is fixed under a REL stretch, i.e. $h_1$ and $h_3$ can be expressed as non-constant linear functions depending on the free variable $h_4$.
\end{itemize}

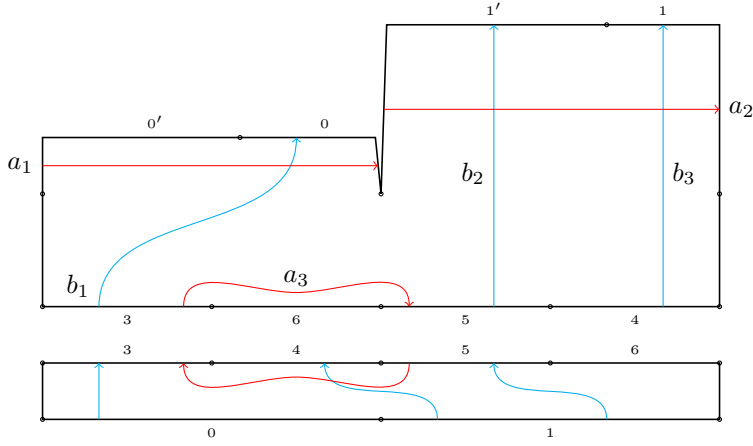
\begin{figure}
\centering
\begin{tikzpicture}[scale=0.75]
\draw [semithick] (0,1)--(0,4)--(5.9,4)--(6,3)--(6.1,6)--(12,6)--(12,1)--cycle;
\draw [semithick] (0,-1)--(0,0)--(12,0)--(12,-1)--cycle;
\foreach \x in {(3.5,4),(0,3),(6,3),(10,6),(12,3),(0,0),(3,0),(6,0),(9,0),(12,0),(0,1),(3,1),(6,1),(9,1),(12,1), (0,-1),(6,-1),(12,-1)} \draw \x circle (1pt);
\draw(5,4) node[above] {\tiny $0$};
\draw(2,4) node[above] {\tiny $0'$};
\draw(11,6) node[above] {\tiny $1$};
\draw(8,6) node[above] {\tiny $1'$};
\draw(1.5,1) node[below] {\tiny $3$};
\draw(4.5,1) node[below] {\tiny $6$};
\draw(7.5,1) node[below] {\tiny $5$};
\draw(10.5,1) node[below] {\tiny $4$};
\draw(1.5,0) node[above] {\tiny $3$};
\draw(4.5,0) node[above] {\tiny $4$};
\draw(7.5,0) node[above] {\tiny $5$};
\draw(10.5,0) node[above] {\tiny $6$};
\draw(3,-1) node[below] {\tiny $0$};
\draw(9,-1) node[below] {\tiny $1$};

\draw [->, cyan] (1,-1) -- (1,0);
\draw [->, cyan] (1,1) to [out=90,in=-90] (4.5,4);
\node [above left] at (1,1) {$b_1$};
\draw [->, cyan] (10,-1) to [out=90,in=-90] (8,0);
\draw [->, cyan] (8,1) -- (8,6);
\node [above left] at (8,3) {$b_2$};
\draw [->, cyan] (7,-1) to [out=90,in=-90] (5,0);
\draw [->, cyan] (11,1) to [out=90,in=-90] (11,6);
\node [above right] at (11,3) {$b_3$};

\draw [->, red] (2.5,1) to [out=90,in=180] (4.5,1.25) to [out=0,in=90] (6.5,1);
\draw [<-, red] (2.5,0) to [out=-90,in=180] (4.5,-.25) to [out=0,in=-90] (6.5,0);
\node [above] at (4.5,1.2) {$a_3$};
\draw [->, red] (0,3.5) -- (5.95,3.5);
\node [left] at (0,3.5) {$a_1$};
\draw [->, red] (6.05,4.5) -- (12,4.5);
\node [right] at (12,4.5) {$a_2$};
\end{tikzpicture}
 \caption{Cylinder Diagram 4A) Case (L1) with a choice of homology basis}
 \label{Config4CylDiagL1Fig}
\end{figure}

\begin{lemma}
\label{CylDiags4L1}
If $\mathcal{M}$ is a rank one orbit closure with $2$-dimensional Forni subspace and non-trivial REL, then it does not contain a translation surface with Cylinder Diagram 4A) Case (L1).
\end{lemma}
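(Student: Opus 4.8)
The plan is to exhibit, on a surface $M\in\mathcal{M}$ realizing Cylinder Diagram 4A) in Case (L1), an absolute homology basis $\{a_1,a_2,a_3,b_1,b_2,b_3\}$ satisfying the first hypothesis of Corollary~\ref{HomBasisForniContraCor}; that corollary then forces $\dim F=0$, contradicting the standing assumption that the Forni subspace is $2$-dimensional, and hence proves that no such $M$ can occur. By Lemma~\ref{Config4TwEqPr} we have $\text{Twist}(M,\mathcal{M})=\text{Pres}(M,\mathcal{M})$, so Theorem~\ref{RELDefs} provides a REL twist and a REL stretch through $M$. In Case (L1) the stretch fixes $h_1$, and since $h_1-h_2=H_1-H_2$ it also fixes $h_2$; thus along the stretch only $h_3$ and $h_4$ vary, subject to $h_3+h_4$ being constant, while the REL twist shears $C_3$ and $C_4$ against the other cylinders. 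I take the basis drawn in Figure~\ref{Config4CylDiagL1Fig}, in which $a_1=[C_1]$ and $a_2=[C_2]$ are already core curves of cylinders on $M$ itself, $a_3$ is a horizontal class supported in $C_3\cup C_4$, and $b_1,b_2,b_3$ are transverse classes running from the bottom of $C_4$ through $C_3$ into $C_1$ (for $b_1$) and into $C_2$ (for $b_2,b_3$).

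The core of the argument is to realize each of $b_1,b_2,b_3$ as the core curve of a cylinder on some surface $M_{b_i}\in\mathcal{M}$, using only the Case (L1) deformations. Following the device in the proof of Lemma~\ref{CylDiags3A}, I would REL-stretch so that one of $C_3,C_4$ has negligible height $\delta>0$, locate the desired transverse cylinder in the near-degenerate configuration where $C_3\cup C_4$ behaves like a single cylinder, and then argue that the cylinder persists for small $\delta$ after a small perturbation of the trajectory; the REL twist is used to align the relevant saddle connections so that each transverse trajectory closes up (placing, say, saddle connection $3$ beneath $0$ to open $b_1$, and aligning saddle connections $4,5$ to open $b_2$ and $b_3$). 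Crucially, because Corollary~\ref{HomBasisForniContraCor} only requires each class to be a core curve on its \emph{own} surface, the three $b_i$ may be opened on three different surfaces reached by distinct REL deformations, so I never need to open all three cylinders simultaneously.

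Once this is done, I would verify that $\{a_1,a_2,a_3,b_1,b_2,b_3\}$ is a genuine basis of $H_1(X,\mathbb{R})$ by the usual intersection-pairing bookkeeping and apply the first bullet of Corollary~\ref{HomBasisForniContraCor} to the five realized classes $a_1,a_2,b_1,b_2,b_3$, concluding $\dim F=0$. The main obstacle is precisely the realization step: since $\mathcal{M}$ is only assumed to have positive-dimensional REL, I cannot presume independent control of every alignment, and the subtle point is to check that the single REL-twist direction available in Case (L1), together with the height transfer under $h_3+h_4=\text{const}$, really does open each $b_i$ into a cylinder. This is exactly where the hypothesis $h_1=H$ of Case (L1), as opposed to (L2) or (L3), enters: it guarantees that $C_1$ and $C_2$ never shrink under the stretch, so driving $C_3$ or $C_4$ toward $\delta=0$ keeps the surface in genus three and inside $\mathcal{M}$ rather than collapsing it into a lower-genus or lower-dimensional stratum.
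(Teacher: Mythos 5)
Your proposal matches the paper's proof essentially step for step: the paper likewise combines Lemma~\ref{Config4TwEqPr} with Theorem~\ref{RELDefs}, uses the (L1) constraint to treat $h_4=\delta$ as a small free parameter, aligns saddle connections $3$, $5$, and $4$ via the horocycle flow and a REL twist to realize $b_1$, $b_2$, $b_3$ as core curves of cylinders on three \emph{separate} surfaces $M_{b_i}$ (arguing persistence for small $\delta>0$ exactly as in Lemma~\ref{CylDiags3A}), and concludes with the first bullet of Corollary~\ref{HomBasisForniContraCor} applied to $a_1,a_2,b_1,b_2,b_3$. The only cosmetic difference is your description of the near-degenerate picture --- the paper locates the limiting trajectories in $C_1\cup C_2\cup C_3$ as $\delta\to 0$ rather than viewing $C_3\cup C_4$ as a single cylinder --- which does not affect the argument.
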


\begin{proof}
This proof will follow similarly to that of Lemma \ref{CylDiags3A} because the main assumptions are similar.  As in the previous lemmas, we find a basis for homology that allows us to apply Corollary \ref{HomBasisForniContraCor}.  By Lemma \ref{Config4TwEqPr} and Theorem \ref{RELDefs}, there exists a non-trivial REL deformation.

We claim that the cylinder diagram in Case (L1) can be depicted as in Figure \ref{Config4CylDiagL1Fig} and that each of the homology curves $b_i$ can be realized as core curves of cylinders for all $i$.  The Case (L1) assumption states that the heights $h_1$ and $h_2$ are fixed and the quantity $h_3 + h_4$ is constant.  Thus, we may treat $h_4$ as a free variable and let $h_4 = \delta$, for some $\delta > 0$ much smaller than the length of the shortest horizontal saddle connection.  As in the proof of Lemma \ref{CylDiags3A}, all of the claims below can be verified by letting $\delta$ go to zero, finding the trajectory in $C_1 \cup C_2 \cup C_3$, and then observing that after returning to the interior of $\cM$ in a neighborhood of the boundary, the trajectory determining a cylinder will persist for sufficiently small $\delta > 0$.

Following our usual notation, let $M_{\gamma}$ denote a translation surface that realizes the curve $\gamma$ as a core curve of a cylinder.  We claim that each $b_i$ in Figure \ref{Config4CylDiagL1Fig} can be realized as the core curve of a cylinder.  Throughout, let $M$ be a horizontally periodic translation surface satisfying Cylinder Diagram 4A) Case (L1).  Once each of the surfaces below have been shown to exist in $\cM$, we apply Corollary \ref{HomBasisForniContraCor} to complete the proof.

\

\noindent $M_{b_1}$:  Act by the horocycle flow and a REL twist so that saddle connection $3$ lies over saddle connection $0$ in $C_4$ and the saddle connection in the bottom of $C_1$ lies over saddle connection $3$ in $C_3$.  For $\delta$ sufficiently small, $b_1$ can be realized as a closed trajectory determining a cylinder.

\

\noindent $M_{b_2}$:  Act by the horocycle flow and a REL twist so that saddle connection $5$ lies over saddle connection $1$ in $C_4$ and the saddle connection in the bottom of $C_2$ lies over saddle connection $5$ in $C_3$.  For $\delta$ sufficiently small, $b_2$ can be realized as a closed trajectory determining a cylinder.

\

\noindent $M_{b_3}$:  Act by the horocycle flow and a REL twist so that saddle connection $4$ lies over saddle connection $1$ in $C_4$ and the saddle connection in the bottom of $C_2$ lies over saddle connection $4$ in $C_3$.  For $\delta$ sufficiently small, $b_3$ can be realized as a closed trajectory determining a cylinder.
\end{proof}

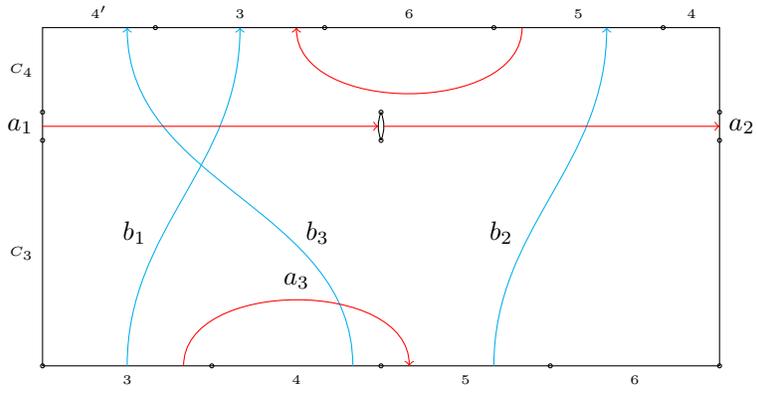
\begin{figure}
\centering
\begin{tikzpicture}[scale=0.75]
\draw (0,0)--(0,6)--(12,6)--(12,0)--cycle;
\draw (6,4) to [out=110,in=-110] (6,4.5);
\draw (6,4) to [out=70,in=-70] (6,4.5);

\foreach \x in {(0,0),(3,0),(6,0),(9,0),(12,0),(0,4),(0,4.5),(6,4),(6,4.5),(12,4),(12,4.5),(2,6),(5,6),(8,6),(11,6)} \draw \x circle (1pt);
\draw(0,2) node[left] {\tiny $C_3$};
\draw(0,5.25) node[left] {\tiny $C_4$};

\draw(1.5,0) node[below] {\tiny $3$};
\draw(4.5,0) node[below] {\tiny $4$};
\draw(7.5,0) node[below] {\tiny $5$};
\draw(10.5,0) node[below] {\tiny $6$};
\draw(3.5,6) node[above] {\tiny $3$};
\draw(6.5,6) node[above] {\tiny $6$};
\draw(9.5,6) node[above] {\tiny $5$};
\draw(11.5,6) node[above] {\tiny $4$};
\draw(1,6) node[above] {\tiny $4'$};

\draw [->, cyan] (1.5,0) to [out=90,in=-90] (3.5,6);
\node [above left] at (2,2) {$b_1$};
\draw [->, cyan] (8,0) to [out=90,in=-90] (10,6);
\node [above left] at (8.5,2) {$b_2$};
\draw [->, cyan] (5.5,0) to [out=90,in=-90] (1.5,6);
\node [above right] at (4.5,2) {$b_3$};

\draw [->, red] (2.5,0) to [out=90,in=90] (6.5,0);
\draw [<-, red] (4.5,6) to [out=-90,in=-90] (8.5,6);
\node [above] at (4.5,1.2) {$a_3$};
\draw [->, red] (0,4.25) -- (5.95,4.25);
\node [left] at (0,4.25) {$a_1$};
\draw [->, red] (6.05,4.25) -- (12,4.25);
\node [right] at (12,4.25) {$a_2$};

\end{tikzpicture}
 \caption{Cylinder Diagram 4A) Case (L2) with a choice of homology basis}
 \label{Config4CylDiagL2Fig1}
\end{figure}

\begin{figure}
\centering
\begin{tikzpicture}[scale=0.75]
\draw (0,0)--(0,4)--(8,4)--(8,0)--cycle;
\foreach \x in {(0,0),(3.5,0),(5,0),(7,0),(8,0),(0,4),(3.5,4),(4.5,4),(6.5,4),(8,4),(3.75,3),(7.75,3)} \draw \x circle (1pt);
\draw (0,3)--(3.75,3);
\draw (7.75,3)--(8,3);

\draw(1.75,0) node[below] {\tiny $3$};
\draw(4.25,0) node[below] {\tiny $4$};
\draw(6,0) node[below] {\tiny $5$};
\draw(7.5,0) node[below] {\tiny $6$};
\draw(1.75,4) node[above] {\tiny $3$};
\draw(4,4) node[above] {\tiny $6$};
\draw(5.5,4) node[above] {\tiny $5$};
\draw(7.25,4) node[above] {\tiny $4$};

\draw [->, cyan] (2,0)--(2,4);
\node [above left] at (2,1) {$b_1$};
\draw [->, cyan] (6,0) to [out=90,in=-90] (5.5,4);
\node [above right] at (5.9,1) {$b_2$};

\end{tikzpicture}
 \caption{Cylinder Diagram 4A) Case (L2) Proof of Lemma \ref{CylDiags4L2Lem2}}
 \label{Config4CylDiagL2Fig2}
\end{figure}
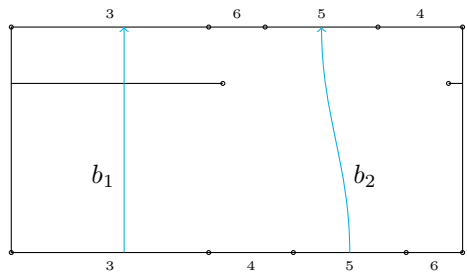

Given a simple cylinder $C$ in any translation surface, $C$ can be naturally associated to a point in $\Omega \cM_{1,1}$, the moduli space of once punctured tori.  Consider the cylinder $C$ and remove the zero or zeros in its boundary and mark the resulting points.  After identifying opposite sides we get a torus with a single marked point.  Two simple cylinders are \emph{isometric} if they have equal area and correspond to the same point in $\Omega \cM_{1,1}$.

\begin{lemma}
\label{CylDiags4L2Lem1}
Let $\mathcal{M}$ be a rank one orbit closure with $2$-dimensional Forni subspace and non-trivial REL.  If $\cM$ contains a translation surface satisfying Cylinder Diagram 4A) Case (L2), then either
\begin{itemize}
\item There exists $\cM'$ (in the boundary of $\cM$) in a lower dimensional stratum in genus three with a $2$-dimensional Forni subspace, or
\item $C_1$ and $C_2$ are isometric.
\end{itemize}
\end{lemma}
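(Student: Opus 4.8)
The plan is to use the single available relative (REL) direction to drive one or both of the simple cylinders $C_1,C_2$ to zero height, and to read off from the resulting degeneration either a genus-three surface in a boundary stratum (the first alternative) or the rigidity statement that $C_1,C_2$ are isometric (the second). Since we are in Configuration 4), Lemma \ref{Config4TwEqPr} gives $\text{Twist}(M,\mathcal{M}) = \text{Pres}(M,\mathcal{M})$, so Theorem \ref{RELDefs} supplies a REL twist and a REL stretch of $M$, the stretch being continuable as long as the cylinders persist and the zeros do not collapse. First I would record the height relations: the absolute cycles $b_1,b_2$ of Figure \ref{Config4CylDiagL2Fig1} have imaginary periods $h_1+h_3+h_4 = H_1$ and $h_2+h_3+h_4 = H_2$, both fixed along any REL deformation, so $h_1-h_2 = H_1-H_2$ is a REL invariant. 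In Case (L2), one of $h_3,h_4$ is fixed and the other is the free REL parameter, so increasing the free height strictly decreases $h_1$ and $h_2$ at equal rates; I would run the free parameter until the smaller of $C_1,C_2$ reaches height zero.

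The dichotomy is governed by whether $H_1\neq H_2$ or $H_1=H_2$. If $H_1\neq H_2$, say $h_2<h_1$, then $C_2$ collapses strictly before $C_1$. Because $C_2$ is a simple cylinder whose top and bottom saddle connections are distinct edges of the surface, I would first apply a REL twist placing a vertical saddle connection in $C_2$ joining the (distinct) zeros on its top and bottom, and then REL stretch to zero height. The non-pinching argument of Lemmas \ref{CylDiag2} and \ref{CylDiags3BC} then applies verbatim: all horizontal saddle connections keep their length, every saddle connection crossing a surviving cylinder (any of $C_1,C_3,C_4$, whose heights stay bounded below) has length bounded away from zero, and hence only the single vertical saddle connection in $C_2$ shrinks; since its endpoints are distinct zeros, no closed curve pinches. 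The limit $M'$ is therefore a genus-three translation surface in a strictly smaller stratum, and Lemma \ref{DetLocusClosed} shows its orbit closure $\mathcal{M}'$ carries a $2$-dimensional Forni subspace. This is the first alternative.

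The delicate case is $H_1=H_2$, where $h_1=h_2$ throughout and $C_1,C_2$ can only collapse simultaneously. Running the same degeneration now shrinks two vertical saddle connections, one in each of $C_1,C_2$, and the collapse stays clean exactly when their union does \emph{not} close up into a pinching curve. The subtlety is that $\mathcal{M}$ has only one relative dimension, so together with the horocycle flow I have just two real twist parameters with which to position two saddle connections: the global twist $u_t$ moves the two cylinders by amounts proportional to $h_1,h_2$ (equal here), while the single REL class $\eta$ spanning $\ker p|_{T(\mathcal{M})}$ twists them in one fixed ratio. I would argue that whenever $C_1,C_2$ are \emph{not} isometric these two motions suffice to put the two collapsing saddle connections into general position (distinct endpoints, not forming a closed curve), so the argument of the previous paragraph again yields a genus-three surface in a lower stratum and hence the first alternative; and conversely that the failure of this — a forced alignment of the two saddle connections into a closed curve under every admissible position — is precisely the statement that $C_1$ and $C_2$ have equal circumference, equal height, and matching twist, i.e.\ are isometric.

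The main obstacle is exactly this last equivalence. Translating ``every admissible twist forces a pinch'' into the equality of the two cylinders' moduli and twists requires tracking how the one-dimensional REL acts on the relative periods bounding $C_1$ and $C_2$, and showing that only a symmetry of the diagram interchanging the two isometric cylinders can make both the horocycle twist and the $\eta$-twist act on them in lockstep. Once this is established, the surrounding bookkeeping — distinctness of the zeros on the tops versus bottoms of $C_1,C_2$, and the persistence of $C_3,C_4$ under the stretch — is routine and mirrors the earlier configurations, and the lemma follows.
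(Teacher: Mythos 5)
Your first case ($H_1 \neq H_2$, so $h_1 \neq h_2$) is correct and matches the paper: place a vertical saddle connection in the shorter simple cylinder, run the REL stretch until it collapses, verify via the standard non-pinching argument that exactly one saddle connection between distinct zeros (distinct by Lemma \ref{SimpCylImpDistZeros}) degenerates, and invoke Lemma \ref{DetLocusClosed}. But the heart of the lemma is the case $h_1 = h_2$, and there your proof stops at a plan. You assert that when $C_1$ and $C_2$ are not isometric, the horocycle twist together with the single REL twist ``suffice to put the two collapsing saddle connections into general position,'' and you yourself flag the equivalence between forced pinching and isometry as ``the main obstacle'' --- but that equivalence \emph{is} the content of the lemma in this case, and it is left unproven. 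Your framing also asks for more than is needed: the converse direction (forced alignment implies isometry) is irrelevant, since the either/or conclusion only requires non-isometric $\Rightarrow$ clean degeneration; meanwhile you make the needed direction harder than necessary by trying to control the relative positions of \emph{two} simultaneously collapsing vertical saddle connections and rule out a closed curve in their union.

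The paper closes this gap with two short observations that your sketch is missing, and which avoid the two-saddle-connection scenario entirely. If $h_1 = h_2$ but the twists differ, arrange that only \emph{one} of the two cylinders contains a vertical saddle connection; then, even though both cylinders reach height zero simultaneously, only one saddle connection degenerates --- it joins distinct zeros, so two zeros merge, no curve pinches, and Lemma \ref{DetLocusClosed} yields $\cM'$ in a lower stratum. If the twists agree but the circumferences differ, apply the horocycle element effecting a full Dehn twist on the smaller cylinder: the larger cylinder is displaced by the smaller circumference, which is a nonzero amount less than its own circumference, so it no longer admits a vertical saddle connection, reducing to the previous situation. What then remains is equal height, equal circumference, and matching twist, i.e.\ $C_1$ and $C_2$ isometric --- the second alternative. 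Without some argument of this kind, your case $H_1 = H_2$ is a genuine gap, not routine bookkeeping.
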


\begin{proof}
By Lemma \ref{Config4TwEqPr} and Theorem \ref{RELDefs}, there exists a non-trivial REL deformation.  Assume that $C_1$ and $C_2$ are not isometric, otherwise we are done.  Without loss of generality, let $h_4 = H$, for some fixed $H > 0$ and let $h_1 \leq h_2$.  Deform the surface by the horocycle flow so that $C_1$ has a vertical saddle connection.  If $h_1 < h_2$, then as observed above, if we let $h_1$ tend to zero by a REL stretch, we get $h_2 = H_2 - H_1 > 0$ so this collapses two distinct simple zeros by inspection and we conclude by Lemma \ref{DetLocusClosed}.  Hence, it suffices to consider $h_1 = h_2$.  Furthermore, the same argument applies if the two cylinders do not simultaneously contain vertical saddle connections, i.e. have equal ``twist.''  Otherwise, a REL stretch could collapse two zeros (exactly one saddle connection) without collapsing a curve.  Finally, if the circumferences of $C_1$ and $C_2$ are not equal, act by a horocycle element on $M$ so that we perform a Dehn twist on the smaller cylinder.  The larger cylinder will not have undergone a Dehn twist and therefore, it can no longer admit a vertical saddle connection.  Hence, if $C_1$ and $C_2$ are not isometric, we can always degenerate to a lower dimensional stratum.
\end{proof}

The following lemma proves that in fact $C_1$ and $C_2$ can never be isometric.

\begin{lemma}
\label{CylDiags4L2Lem2}
Let $\mathcal{M}$ be a rank one orbit closure with $2$-dimensional Forni subspace and non-trivial REL.  If $\cM$ contains a translation surface satisfying Cylinder Diagram 4A) Case (L2), then $C_1$ and $C_2$ are not isometric.
\end{lemma}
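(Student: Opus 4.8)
The plan is to argue by contradiction: assuming $C_1$ and $C_2$ are isometric, I will realize a Lagrangian triple of absolute homology classes as core curves of cylinders in $\mathcal{M}$ and invoke Corollary~\ref{HomBasisForniContraCor} to force $\dim F = 0$, contradicting the standing hypothesis $\dim F = 2$. The setup is the same as in the preceding lemmas: by Lemma~\ref{Config4TwEqPr} we have $\text{Twist}(M,\mathcal{M}) = \text{Pres}(M,\mathcal{M})$, so Theorem~\ref{RELDefs} supplies a nontrivial REL twist and REL stretch of $M$. In Case (L2) one of the heights, say $h_4$, is fixed, and since $C_1$ and $C_2$ are isometric they share a common height $\delta := h_1 = h_2$, a common circumference, and a common twist; the REL stretch that increases $h_3$ then drives $\delta$ to zero. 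As in Lemmas~\ref{CylDiags3A} and~\ref{CylDiags4L1}, every cylinder I produce will be located in the $\delta = 0$ limit and then shown to persist for all sufficiently small $\delta > 0$ by returning to the interior of $\mathcal{M}$ near its boundary.

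I would fix the homology basis $\{a_1, a_2, a_3, b_1, b_2, b_3\}$ of Figure~\ref{Config4CylDiagL2Fig1}, in which $\{b_1, b_2, b_3\}$ is Lagrangian. Because the core curves of the four horizontal cylinders span only a two-dimensional subspace of $H_1(X,\bR)$ (the homological dimension of Configuration 4 is two), the $a_i$ cannot all be realized as core curves, so the argument must proceed through the transverse classes $b_i$ and the second bullet of Corollary~\ref{HomBasisForniContraCor}. Collapsing the isometric pair $C_1, C_2$ reduces the surface to the genus-two configuration $C_3 \cup C_4$ glued along the saddle connections $3,4,5,6$, pictured in Figure~\ref{Config4CylDiagL2Fig2}; there the closed trajectories representing $b_1$ and $b_2$ are visible as honest vertical cylinders, and suitable REL twists of $C_3$ and $C_4$ bring the relevant saddle connections into vertical alignment so that each closes up. These cylinders persist on the genus-three surface for small $\delta$, producing $M_{b_1}, M_{b_2} \in \mathcal{M}$.

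The step I expect to be the crux, and the only place where the isometry hypothesis is genuinely used, is the realization of the remaining diagonal class $b_3$, which must traverse the region occupied by the two collapsing cylinders. Equality of the circumferences, heights, and twists of $C_1$ and $C_2$ is exactly the condition that lets a transverse trajectory leave a chosen saddle connection and return to it with matching holonomy after crossing both cylinders, thereby closing up into a cylinder with core curve $b_3$; if $C_1$ and $C_2$ were \emph{not} isometric one could not close this trajectory and would instead degenerate to a lower stratum, which is the alternative already extracted in Lemma~\ref{CylDiags4L2Lem1}. I would therefore use the exchange symmetry of $C_1$ and $C_2$ to place the diagram in the symmetric position and produce $M_{b_3} \in \mathcal{M}$.

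With $M_{b_1}, M_{b_2}, M_{b_3}$ in hand and $\{b_1, b_2, b_3\}$ Lagrangian, Corollary~\ref{HomBasisForniContraCor} gives $\dim F = 0$, contradicting $\dim F = 2$; hence $C_1$ and $C_2$ are not isometric. The remaining care is the bookkeeping familiar from Lemma~\ref{CylDiags3A}: one must verify that in the $\delta \to 0$ limit only saddle connections joining \emph{distinct} zeros shorten, so that no absolute cycle is pinched and the three trajectories genuinely represent the Lagrangian triple rather than collapsing to a lower-rank configuration.
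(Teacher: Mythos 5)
Your overall skeleton (contradiction, Lemma \ref{Config4TwEqPr} plus Theorem \ref{RELDefs}, REL-stretching the isometric pair $C_1, C_2$ down to marked segments so that $C_3 \cup C_4$ becomes a single cylinder, then feeding realized core curves into Corollary \ref{HomBasisForniContraCor}) is the paper's, but you have misidentified where the difficulty lies, and the step you gloss over is the actual crux. In the degenerate picture of Figure \ref{Config4CylDiagL2Fig2}, the class $b_3$ --- which you single out as the hard case needing holonomy-matching across both collapsing cylinders --- is the \emph{easy} one in the paper, precisely because no intersection conditions with $a_1, a_2$ are imposed on it. The hard step is realizing $b_1$ and $b_2$ simultaneously with the constraints $I(a_2,b_1)=0$ and $I(a_1,b_2)=0$: a vertical cylinder from saddle connection $5$ to itself might be forced to cross the marked segment left by the collapsed $C_1$, and no REL twist can fix this, since REL twists slide $C_3$ against $C_4$ but the obstruction is metric, not a matter of alignment. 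The paper resolves it quantitatively: the isometry hypothesis forces the marked segment to have length exactly $1/2$ of the merged circumference (this, together with arranging simultaneous vertical saddle connections in $C_1, C_2, C_3$ before collapsing, is where isometry is genuinely used --- not in closing up $b_3$); then, taking $3$ to be the longest of the saddle connections with $\ell(3)\leq 1/2$, one shows that if every vertical trajectory from $5$ to itself crossed the marked segment, then $\ell(3)+\ell(6)+\ell(5)\leq 1/2$ or $\ell(3)+\ell(4)+\ell(5)\leq 1/2$, forcing $\ell(4)\geq 1/2$ or $\ell(6)\geq 1/2$ and contradicting maximality of $\ell(3)$. Your proposal contains none of this bookkeeping, and ``visible as honest vertical cylinders'' is exactly the assertion that has to be proved.

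There is a second, structural problem in how you invoke Corollary \ref{HomBasisForniContraCor}. You argue that since the $a_i$ cannot all be realized as core curves, one ``must'' use the second bullet with $\{b_1,b_2,b_3\}$ Lagrangian; but the first bullet only requires $\{a_1,a_2,b_1,b_2,b_3\}$, never $a_3$, and $a_1, a_2$ are already the core curves of the horizontal cylinders $C_1, C_2$ on $M$ itself --- this is exactly the route the paper takes. Your chosen route, by contrast, requires verifying that $\{b_1,b_2,b_3\}$ is Lagrangian, which you assert from the figure but never check; the paper only establishes that $\{a_1,a_2,a_3\}$ is Lagrangian and leaves the mutual intersections of the $b_j$ unconstrained. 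Note also that you cannot get the Lagrangian property for free by realizing all three $b_i$ as parallel cylinders on a single surface: under the standing hypothesis $\dim F = 2$, Forni's criterion (as packaged in Lemma \ref{Gen3TopConfigs}) caps the homological span of parallel core curves at two, so simultaneity plus independence would itself need a separate argument. As written, the proposal would not close without supplying both the length argument and a corrected application of the corollary.
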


\begin{proof}
By contradiction, assume that $C_1$ and $C_2$ are isometric.  As in previous lemmas, we find a basis for homology that allows us to apply Corollary \ref{HomBasisForniContraCor}.  We claim that if $C_1$ and $C_2$ are isometric, then the cylinder diagram in Case (L2) can be depicted as in Figure \ref{Config4CylDiagL2Fig1}.  By Lemma \ref{Config4TwEqPr} and Theorem \ref{RELDefs}, there exists a non-trivial REL deformation.  Since $C_1$ and $C_2$ are isometric, we can apply a combination of the horocycle flow and a REL twist so that each of $C_1$, $C_2$, and $C_3$ admit a vertical saddle connection.  The vertical saddle connection can be seen in Figure \ref{Config4CylDiagL2Fig1} as the sides of the cylinders $C_1$, $C_2$, $C_3$.  Next, perform a REL stretch to shrink the heights of $C_1$ and $C_2$ to any arbitrarily small quantity, which can be done by the Case (L2) assumption.

We consider a specific boundary translation surface of $\cM$ where many arguments clearly apply and then, following the theme of the proof of many lemmas in this section, argue that the desired property that exists on the boundary persists in a neighborhood in the interior of $\cM$.  In Figure \ref{Config4CylDiagL2Fig1}, continue the REL stretch so that the cylinders $C_1$ and $C_2$ collapse to marked horizontal line segments.  In Figure \ref{Config4CylDiagL2Fig2}, we mark the degeneration of $C_1$ with a horizontal line segment, and the degeneration of $C_2$ is left blank.  Observe that after collapsing $C_1$ and $C_2$, $C_3$ and $C_4$ can be viewed as a single cylinder.  After cutting and regluing if necessary and twisting by the horocycle flow, the resulting degenerate surface appears as in Figure \ref{Config4CylDiagL2Fig2}.  Without loss of generality, let $3$ be the longest of the four saddle connections $\{3, 4, 5, 6\}$, and let it be in the location depicted in Figure \ref{Config4CylDiagL2Fig2}.

Next we define the homology basis $\{a_1, a_2, a_3, b_1, b_2, b_3\}$ depicted in Figure \ref{Config4CylDiagL2Fig1}.  The curves $a_1$ and $a_2$ are the core curves of the cylinders $C_1$ and $C_2$, respectively.  The absolute homology curve $a_3$ is exactly as depicted in Figure \ref{Config4CylDiagL2Fig1}, where it is homotopic to a union of saddle connections $4$ and $6$.  The curve $b_1$ satisfies the intersection property $I(a_1, b_1) \not= 0$ and $I(a_i, b_1) = 0$, for $i = 2,3$.  The curve $b_2$ satisfies the intersection property $I(a_2, b_2) \not= 0$ and $I(a_i, b_2) = 0$, for $i = 1,3$.  The curve $b_3$ satisfies the intersection property $I(a_3, b_3) \not= 0$.  The fact that $\{a_1, a_2, a_3\}$ form a Lagrangian subspace of homology combined with the intersection assumptions on the $b_j$, imply that the collection of all $a_i$ and $b_j$ are linearly independent, whence they are a basis for absolute homology.  We make no assumptions about which saddle connections $b_1$, $b_2$, and $b_3$ cross as long as the intersection relations are satisfied.

For convenience assume that the cylinder in Figure \ref{Config4CylDiagL2Fig2} has unit circumference.  Under this normalization, $C_1$ and $C_2$ each have circumference $1/2$.  Thus, the marked horizontal segment in Figure \ref{Config4CylDiagL2Fig2} also has length $1/2$.  Let $\ell(\cdot)$ denote the length of a saddle connection.

In order to apply Corollary \ref{HomBasisForniContraCor} to conclude, we show that each curve $b_i$ can be realized as the core curve of a cylinder.  It is clear that $b_3$ can be realized as the core curve of a cylinder from saddle connection $4$ to itself because no assumptions were made about the intersection of $b_3$ with $a_1$ or $a_2$.  After renumbering $C_1$ and $C_2$, thus $a_1$ and $a_2$, it suffices to assume that $b_1$ can be realized as the core curve of a vertical cylinder from $3$ to itself in Figure \ref{Config4CylDiagL2Fig2}.  Depending on the length of saddle connection $3$ and the location of the horizontal segment in Figure \ref{Config4CylDiagL2Fig2}, there may be more than one vertical cylinder from saddle connection $3$ to itself.  In fact, if there is more than one cylinder, then a second cylinder would pass through the blank space in the complement of the horizontal segment, which corresponds to $a_2$ in Figure \ref{Config4CylDiagL2Fig1}.  This would allow us to realize $b_2$ as the core curve of a vertical cylinder from $3$ to itself and we would be done.  Since the horizontal segment always has length $1/2$, if $\ell(3) > 1/2$, then we are done because such a second vertical cylinder from $3$ to itself realizing $b_2$ would exist.  Therefore, it suffices to assume that $\ell(3) \leq 1/2$ and that there is a unique vertical cylinder from $3$ to itself.  Since $3$ was assumed to be the longest saddle connection and $\sum_{j=3}^6 \ell(j) = 1$, we have $1/4 \leq \ell(3) \leq 1/2$.

Having realized $b_1$ and $b_3$ as core curves of cylinders, it suffices to realize $b_2$ as the core curve of a cylinder to complete the proof.  In the remark at the beginning of this section, we permitted saddle connection $6$ to have zero length so that every proof in this section works in the stratum $\cH(2,1,1)$.  The assumption that the translation surface has genus three implies that it is impossible for $\ell(5) = \ell(6) = 0$, so if we permit $\ell(6) \geq 0$, then $\ell(5) > 0$.  We consider a cylinder $C'$ from saddle connection $5$ to itself that lies in the complement of the vertical cylinder from $3$ to itself.  If any such cylinder $C'$ passes through the complement of the horizontal segment in Figure \ref{Config4CylDiagL2Fig2}, then we are done because $b_2$ can be taken to be the core curve of $C'$.  The only obstruction to the existence of such a cylinder $C'$ is if every trajectory from saddle connection $5$ to itself passes through the horizontal segment of length $1/2$.  We claim that Figure \ref{Config4CylDiagL2Fig2} accurately depicts the situation.  By contradiction, if every trajectory from both $3$ to itself and $5$ to itself passes through the horizontal segment, then the total lengths of saddle connections $3$ and $5$, plus whichever saddle connection lies between them (either $6$ or $4$), is at most $1/2$.  In other words, this imposes either the restriction $\ell(3) + \ell(6) + \ell(5) \leq 1/2$ or $\ell(3) + \ell(4) + \ell(5) \leq 1/2$.  Since $\sum_{j=3}^6 \ell(j) = 1$, the relations $\ell(3) + \ell(6) + \ell(5) \leq 1/2$ or $\ell(3) + \ell(4) + \ell(5) \leq 1/2$ imply $\ell(4) \geq 1/2$ or $\ell(6) \geq 1/2$, respectively.  This contradicts the assumption that $3$ is the longest saddle connection, that $\ell(3) \leq 1/2$, and that $\ell(5) > 0$.  Thus, $b_i$ can be realized as the core curve of a cylinder, for all $i$, and we conclude by Corollary \ref{HomBasisForniContraCor}.
\end{proof}

By combining the previous two lemmas, we get

\begin{corollary}
\label{CylDiags4L2}
Let $\mathcal{M}$ be a rank one orbit closure with $2$-dimensional Forni subspace and non-trivial REL.  If $\cM$ contains a translation surface satisfying Cylinder Diagram 4A) Case (L2), then there exists $\cM'$ (in the boundary of $\cM$) in a lower dimensional stratum in genus three with $2$-dimensional Forni subspace.
\end{corollary}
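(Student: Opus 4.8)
The plan is to derive this corollary purely as a logical combination of the two immediately preceding lemmas, with no new geometric input required. Lemma \ref{CylDiags4L2Lem1} establishes a dichotomy under the standing hypotheses (rank one, $2$-dimensional Forni subspace, non-trivial REL, and a translation surface in Cylinder Diagram 4A) Case (L2)): either the desired degeneration to a boundary affine manifold $\cM'$ in a lower-dimensional stratum with $2$-dimensional Forni subspace exists, or else the two simple cylinders $C_1$ and $C_2$ must be isometric. The strategy is to show that the second horn of this dichotomy is vacuous, thereby forcing the first.

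To that end, I would invoke Lemma \ref{CylDiags4L2Lem2}, which asserts precisely that $C_1$ and $C_2$ can never be isometric under the very same hypotheses. Since the ``isometric'' alternative of Lemma \ref{CylDiags4L2Lem1} is thus excluded, only the first alternative can hold, and that is exactly the conclusion of the corollary. Once both lemmas are in hand the argument is essentially two sentences. The one point to verify is that both lemmas are applied to the identical hypothesis set, so that the surface furnished by the Case (L2) assumption---together with the non-trivial REL deformation guaranteed by Lemma \ref{Config4TwEqPr} and Theorem \ref{RELDefs}---serves as the common input for both; this is immediate from their shared statements.

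There is no genuine obstacle at the level of the corollary itself. The substantive work of this subsection lives entirely in Lemma \ref{CylDiags4L2Lem2}, whose cylinder-counting argument (tracking the saddle-connection lengths $\ell(3), \ldots, \ell(6)$ against the fixed horizontal segment of length $1/2$ and exploiting the normalization $\sum_{j=3}^{6} \ell(j) = 1$) is what actually rules out the isometric configuration and lets us realize $b_2$ as the core curve of a cylinder. Lemma \ref{CylDiags4L2Lem1}, in turn, supplies the degeneration mechanism producing $\cM'$ in a lower-dimensional stratum, with the persistence of the $2$-dimensional Forni subspace across the boundary coming from Lemma \ref{DetLocusClosed}. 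The corollary merely packages these two results, so the only expository care needed is to state the dichotomy of Lemma \ref{CylDiags4L2Lem1} and then cite Lemma \ref{CylDiags4L2Lem2} to collapse it to the single surviving case.
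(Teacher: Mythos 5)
Your proposal is correct and matches the paper exactly: the paper introduces this corollary with the phrase ``By combining the previous two lemmas,'' i.e.\ it likewise treats Lemma \ref{CylDiags4L2Lem1} as a dichotomy whose isometric alternative is eliminated by Lemma \ref{CylDiags4L2Lem2}, leaving the degeneration to $\cM'$ as the only possibility. Your additional check that both lemmas share the identical hypothesis set is sound and requires no further argument.
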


\begin{lemma}
\label{CylDiags4L3}
Let $\mathcal{M}$ be a rank one orbit closure with $2$-dimensional Forni subspace and non-trivial REL.  If $\cM$ contains a translation surface satisfying Cylinder Diagram 4A) Case (L3), then there exists $\cM'$ (in the boundary of $\cM$) in a lower dimensional stratum in genus three with $2$-dimensional Forni subspace.
\end{lemma}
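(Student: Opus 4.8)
The plan is to produce the degeneration directly, in the spirit of the non-isometric half of Lemma \ref{CylDiags4L2Lem1}, rather than through a homology basis as in Case (L1). I would perform a REL stretch that collapses a \emph{single} cylinder whose top and bottom carry disjoint sets of zeros, so that the limit $M'$ is a genus three translation surface in a strictly smaller stratum of $\Omega\cM_3$; then Lemma \ref{DetLocusClosed} transports the $2$-dimensional Forni subspace to the orbit closure $\cM'$ of $M'$, giving exactly the conclusion of the lemma.

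First I would record the height relations. By Lemma \ref{Config4TwEqPr} and Theorem \ref{RELDefs} a non-trivial REL stretch exists, and the cycles $b_1, b_2$ fix the quantities $h_1 + h_3 + h_4 = H_1$ and $h_2 + h_3 + h_4 = H_2$, so $h_1 - h_2 = H_1 - H_2$ is constant along the REL leaf. Treating $h_4$ as the free parameter, Case (L3) says $h_1 = a + b h_4$ and $h_3 = c + d h_4$ are non-constant, i.e. $b \neq 0$ and $d \neq 0$, and differentiating $h_1 + h_3 + h_4 = H_1$ forces $b + d = -1$. The structural input, read off from the Configuration $4)$ degeneration in Table \ref{MainConfigTable}, is that each cylinder joins two \emph{distinct} parts of the degenerate surface (a sphere to a sphere, or a sphere to the torus); hence the zeros on the top of any one cylinder are disjoint from those on its bottom, so the collapse of a single cylinder merges distinct zeros and preserves the genus. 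The one degeneration that must be avoided is the simultaneous collapse of the homologous pair $C_1, C_2$ (which share the slope $b$): when both heights vanish together the transverse cycle crossing both, of vertical extent $h_1 + h_2$, can pinch, which is precisely the pathology that the isometric obstruction of Lemmas \ref{CylDiags4L2Lem1} and \ref{CylDiags4L2Lem2} was designed to exclude.

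Next I would choose the direction of the stretch so that the first height to reach zero belongs to $C_3$ or $C_4$ (or, when $H_1 \neq H_2$, to a solitary $C_1$ or $C_2$, the other persisting). The slopes in $h_4$ are $b$ for $h_1, h_2$, $d = -1-b$ for $h_3$, and $1$ for $h_4$. If $b > 0$ then $d < -1$, and increasing $h_4$ drives $h_3 \downarrow 0$ while $h_1, h_2, h_4$ grow, collapsing $C_3$; if $-1 < b < 0$ then $d < 0$, and decreasing $h_4$ drives $h_4 \downarrow 0$ while the other heights grow, collapsing $C_4$; if $b < -1$ then $d > 0$, and decreasing $h_4$ makes $h_3, h_4$ decrease while $h_1, h_2$ grow, so whichever of $C_3, C_4$ vanishes first collapses. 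In every case a direction exists in which a single, non-paired cylinder reaches height zero first, so the forbidden simultaneous collapse of $C_1, C_2$ never occurs. This is exactly where Case (L3) is genuinely easier than Case (L2): because $h_3$ is varying, we are never forced onto the homologous pair.

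Finally I would run the standard no-pinch argument: before the collapse, a REL twist places a vertical saddle connection inside the chosen cylinder; along the stretch all horizontal saddle connections keep fixed length, and every saddle connection crossing the height of a non-collapsing cylinder is bounded below by that positive height, so only a vertical saddle connection inside the collapsing cylinder can shrink. Exactly one does, between distinct zeros by the disjointness above, so no closed curve pinches and $M'$ is a genus three surface in a lower-dimensional stratum; Lemma \ref{DetLocusClosed} then gives the $2$-dimensional Forni subspace on $\cM'$. The step I expect to demand the most care is the direction-choosing step, that is, verifying through the slope bookkeeping $b + d = -1$ with $b, d \neq 0$ that one can always reach an admissible single-cylinder collapse and never be trapped into the simultaneous collapse of the homologous pair $C_1, C_2$.
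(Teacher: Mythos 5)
Your route is genuinely different from the paper's. The paper disposes of Case (L3) in a few lines by pure reduction: letting $h_4$ tend to zero, it checks in each limiting regime of $(h_1,h_3)$ that either the Case (L1) argument (the homology-basis contradiction via Corollary \ref{HomBasisForniContraCor}) or the Case (L2) argument applies, and it never collapses $C_3$ or $C_4$. You instead attempt a direct degeneration, and your slope bookkeeping is sound: $b+d=-1$ with $b,d\neq 0$ does guarantee a stretch direction in which $h_3$ or $h_4$ reaches zero while the remaining heights stay bounded below (and your parenthetical fallback to a solitary $C_1$ or $C_2$ when $H_1\neq H_2$ is fine). One terminological correction: in Diagram 4A) the homologous pair is $C_3, C_4$ (their core curves both equal $\gamma_1+\gamma_2$), not $C_1, C_2$; the latter merely share the height-slope $b$ because $h_1-h_2$ is a period. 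The phenomenon you rightly want to avoid --- simultaneous collapse of $C_1$ and $C_2$ pinching the union of their two vertical saddle connections --- is real, but your label for it is wrong.

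The genuine gap is the no-pinch step, where you assert ``Exactly one does.'' Disjointness of the top and bottom zero sets of the collapsing cylinder only guarantees that a single shrinking saddle connection is not a loop; it does not bound the \emph{number} of shrinking vertical saddle connections, and the boundary combinatorics of $C_3$ and $C_4$ is exactly the dangerous one. The top of $C_3$ consists of the bottoms of the simple cylinders $C_1, C_2$, so it carries a \emph{single} simple zero appearing \emph{twice} (likewise the bottom of $C_4$); hence two vertical saddle connections, from the two occurrences of that zero to two occurrences of the same zero on the opposite boundary, can align simultaneously, and their union is a closed curve of length tending to zero --- the collapse then pinches a curve and the genus drops. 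This is precisely the scenario the paper had to rule out in Cylinder Diagram 3C) in Lemma \ref{CylDiags3BC}, and there the argument worked only because each top zero appeared once; that argument does not transfer to $C_3$ or $C_4$ of Diagram 4A). Nor can you appeal to genericity: in a rank-one $\cM$ the horizontal lengths $\ell(3),\ell(4),\ell(5)$ and the circumferences of $C_1,C_2$ can only be rescaled, so a resonance among them cannot be perturbed away inside $\cM$. The gap is repairable --- one of the two zeros on the $3,4,5$ boundary occurs only once, so a twist can be chosen so that some aligned pair involves it, and then at worst two saddle connections with distinct endpoint pairs collapse, merging three simple zeros and landing in $\cH(3,1)$, still a lower stratum in genus three --- but none of this analysis appears in your write-up, and as stated the degeneration may fail to stay in genus three. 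This is exactly why the paper only ever collapses the simple cylinders $C_1, C_2$ (where the vertical saddle connection is automatically unique) and otherwise argues via Corollary \ref{HomBasisForniContraCor}.
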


\begin{proof}
We reduce Case (L3) to the Cases (L1) and (L2) above.  Without loss of generality, let $h_1 \leq h_2$.  We let $h_4$ be the free variable and consider the behavior of $h_1$ and $h_3$ as $h_4$ tends to zero.  Since $h_1 + h_3 + h_4 = H_1 > 0$, clearly at least one of $h_1$ and $h_3$ is bounded away from zero as $h_4$ tends to zero.

If neither $h_1$ nor $h_3$ converge to zero as $h_4$ converges to zero, then we can apply the argument of Case (L1).

If $h_3$ converges to zero before $h_4$ converges to zero, then we can switch the roles of $C_4$ and $C_3$ and again apply the argument of Case (L1).

If $h_1$ converges to zero before $h_4$ converges to zero, then Case (L2) applies since at least one of the cylinders $C_1$ and $C_2$ can be made arbitrarily small while the heights of $C_3$ and $C_4$ are bounded away from zero.

Finally, if $h_1$ and $h_4$ simultaneously converge to zero, then Case (L1) can be applied because no assumptions were made about the heights of $C_1$ or $C_2$ in that argument.  This concludes every possibility.
\end{proof}

\begin{remark}
We recall that the complicating issue in this section was that $\cM$ might only be $3$-dimensional.  In the case where $\cM$ is $4$-dimensional, any two elements of $\{h_1, h_3, h_4\}$ can be chosen as the free variables, and therefore, any of the cases above apply.
\end{remark}

Having addressed all possible cases of all possible cylinder configurations for a translation surface in a rank one orbit closure with non-trivial REL and non-trivial Forni subspace, we can finally prove Theorem \ref{Rank1PrinStratum}.

\begin{proof}[Proof of Theorem \ref{Rank1PrinStratum}]
By Lemma \ref{Gen3TopConfigs}, every horizontally periodic translation surface in $\mathcal{M}$ must admit one of the six cylinder configurations listed in Table \ref{MainConfigTable}.  By Lemma \ref{3PlusCyls}, there must be a horizontally periodic translation surface in $\cM$ with at least three cylinders, which reduces the cylinder configurations to Configurations 2), 3) or 4).

Rather than proceeding abstractly by induction on the number of zeros, we count to four.  By Corollary \ref{NoZeroTeichCurvesInH4} and Theorem \ref{NoAISLowerStrata}, there are no orbit closures in $\cH(4)$ with non-trivial Forni subspace.  By contradiction suppose there exists a rank one affine manifold $\cM$ with non-trivial REL.  In strata with two zeros, it is not possible to degenerate to an orbit closure in a lower stratum with non-trivial Forni subspace, so Lemmas \ref{CylDiag2}, \ref{CylDiags3BC}, and \ref{CylDiags3A} prove that for the $3$-cylinder diagrams we must always have $\text{Twist}(M, \mathcal{M}) \not= \text{Pres}(M, \mathcal{M})$.  However, by the remark at the beginning of this section, Configuration 4) is never possible in strata with two zeros, so we have a contradiction that leads us to the case that $\cM$ lies in the stratum with three zeros.

If $\cM \subset \cH(2,1,1)$, then the previous argument proves that for horizontally periodic translation surfaces with exactly three cylinders, $\text{Twist}(M, \mathcal{M}) \not= \text{Pres}(M, \mathcal{M})$.  By Lemma \ref{WrightLem86}, there exists $M' \in \mathcal{M}$ satisfying Configuration 4).  However, if there is a horizontally periodic translation surface satisfying Configuration 4), then it must satisfy Case (L1), (L2), or (L3) by Lemma \ref{CylDiags4B}.  By Lemmas \ref{CylDiags4L1}, \ref{CylDiags4L3}, and Corollary \ref{CylDiags4L2}, none of these cases are possible.  The argument for the principal stratum follows exactly as for the stratum $\cH(2,1,1)$.  Hence, there does not exist a rank one affine manifold with non-trivial REL and non-trivial Forni subspace.
\end{proof}

\section{Rank Two Affine Manifolds in $\mathcal{H}(1,1,1,1)$}
\label{Rank2PrinStrSect}

Theorem \ref{NoAISLowerStrata} proved that there are no rank two affine manifolds with non-trivial Forni subspace outside of the principal stratum.  In this section we prove Theorem \ref{NoRank2PrinStratum}.  The key to the proof is to use the results of \cite{WrightCylDef} for higher rank affine manifolds.

\begin{definition}
Given an affine manifold $\cM$, let $\cC$ be the maximal collection of parallel cylinders on a translation surface $M \in \cM$ such that every cylinder in $\cC$ remains parallel to every other cylinder in $\cC$ under all local deformations of $M \in \cM$.
\end{definition}

\begin{theorem}\cite[Thm. 1.10]{WrightCylDef}
\label{WrightCylDef}
Let $\mathcal{M}$ be an affine invariant submanifold of rank $k$.  Then there is a horizontally periodic translation surface $M \in \mathcal{M}$ whose horizontal core curves span a subspace of $T^M(\mathcal{M})^*$ of dimension $k$.  In particular, there is a horizontally periodic translation surface in $\mathcal{M}$ with at least $k$ horizontal cylinders.
\end{theorem}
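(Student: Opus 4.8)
The plan is to reformulate the span of the core curves as the rank of an evaluation map, to produce a horizontally periodic surface realizing the maximal number of cylinders, and then to compute the relevant dimension from Wright's cylinder-deformation theory. Write $\gamma_1, \ldots, \gamma_n$ for the core curves of the horizontal cylinders of a horizontally periodic $M \in \cM$, and consider the evaluation map
$$\Phi: T^M(\mathcal{M}) \to \bR^n, \qquad v \mapsto (v(\gamma_1), \ldots, v(\gamma_n)).$$
Because each $\gamma_i$ is a closed curve, $\Phi$ factors through the projection $p$ to absolute cohomology, and the dimension of the span of the core curves inside $T^M(\mathcal{M})^*$ is exactly the rank of $\Phi$, i.e. the dimension of $\operatorname{image}(\Phi^*)$. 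By the definition of the cylinder preserving space, $\ker \Phi = \text{Pres}(M, \mathcal{M})$, so
$$\dim \operatorname{span}_{\bR}\{\gamma_1, \ldots, \gamma_n\} = \dim T^M(\mathcal{M}) - \dim \text{Pres}(M, \mathcal{M}).$$
It therefore suffices to find a horizontally periodic $M$ with $\dim \text{Pres}(M, \mathcal{M}) = \dim T^M(\mathcal{M}) - k$.

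First I would produce a good surface. Horizontally periodic surfaces exist in $\cM$, and the number of horizontal cylinders on a genus $g$ surface is bounded above by $3g-3$; hence we may choose $M \in \cM$ horizontally periodic with the maximal number of horizontal cylinders. For such an $M$, Lemma \ref{WrightLem86} forces $\text{Twist}(M, \mathcal{M}) = \text{Pres}(M, \mathcal{M})$, since otherwise $\cM$ would contain a horizontally periodic surface with strictly more cylinders, contradicting maximality.

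The heart of the matter, and the step I expect to be the main obstacle, is the dimension identity
$$\dim \text{Twist}(M, \mathcal{M}) = \dim T^M(\mathcal{M}) - k \quad \text{whenever } \text{Twist}(M, \mathcal{M}) = \text{Pres}(M, \mathcal{M}),$$
which generalizes the codimension-one statement used in the rank one case, where $\dim \text{Twist}(M, \mathcal{M}) = \dim T_{\bR}(\mathcal{M}) - 1$. This is essentially the content of \cite[Cor. 8.3, Lem. 8.8, Cor. 8.12]{WrightCylDef}. The conceptual picture is symplectic: using $\text{Twist}(M,\mathcal{M}) = T_{\bR}(\mathcal{M}) \cap \operatorname{span}_{\bR}(\eta_{\gamma_i})$ together with the fact that the $\eta_{\gamma_i}$ are duals of pairwise disjoint (hence non-intersecting) curves, one sees that $p(\text{Twist}(M, \mathcal{M}))$ is an \emph{isotropic} subspace of the $2k$-dimensional symplectic space $p(T^M(\mathcal{M}))$ of Theorem \ref{ForniConst}, which yields the inequality $\dim \operatorname{span}\{\gamma_i\} \leq k$ on every horizontally periodic surface. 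The difficult direction is certifying that the hypothesis $\text{Twist} = \text{Pres}$ upgrades this isotropic subspace to a \emph{Lagrangian} one of full dimension $k$; this requires pairing the admissible twist deformations against the horizontal real part $\operatorname{Re}(\omega) \in T_{\bR}(\mathcal{M})$ and counting how many independent shears survive inside $\mathcal{M}$, which is precisely where the rank enters and where the careful bookkeeping of Wright's machinery is indispensable.

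Granting the identity, on the maximal surface we obtain $\dim \text{Pres}(M, \mathcal{M}) = \dim \text{Twist}(M, \mathcal{M}) = \dim T^M(\mathcal{M}) - k$, so by the displayed formula of the first paragraph the horizontal core curves span a subspace of $T^M(\mathcal{M})^*$ of dimension exactly $k$. The final ``in particular'' assertion is then immediate: spanning a $k$-dimensional subspace requires at least $k$ of the functionals $\gamma_i^*$, so $M$ has at least $k$ horizontal cylinders.
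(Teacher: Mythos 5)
First, a caveat on the comparison: this paper does not prove the statement at all---Theorem \ref{WrightCylDef} is imported verbatim from \cite[Thm.~1.10]{WrightCylDef}---so your attempt can only be measured against Wright's original argument. Your skeleton does match his: choose a horizontally periodic surface maximizing the number of horizontal cylinders (finite since it is bounded by $3g-3$), use Lemma \ref{WrightLem86} to force $\text{Twist}(M,\mathcal{M}) = \text{Pres}(M,\mathcal{M})$, and convert the codimension of $\text{Pres}(M,\mathcal{M})$ in $T^M(\mathcal{M})$ into the dimension of the span of the core-curve functionals. That reduction is correct. But since you explicitly delegate the dimension identity $\dim \text{Twist}(M,\mathcal{M}) = \dim T^M(\mathcal{M}) - k$ to \cite[Cor.~8.3, Lem.~8.8, Cor.~8.12]{WrightCylDef}---the very results Wright establishes en route to his Theorem 1.10---your proposal is an annotated outline of the original proof rather than an independent one, with the self-identified ``heart of the matter'' left as a citation.

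There is also a concrete slip in your conceptual paragraph: you have the easy and hard directions inverted. The duals $\eta_{\gamma_i}$ of the disjoint core curves do span an isotropic subspace $U \subseteq H^1(X,\mathbb{R})$, but the functionals $v \mapsto v(\gamma_i)$ on $T^M(\mathcal{M})$ correspond to the projections of the $\eta_{\gamma_i}$ into $p(T)$ along $p(T)^{\perp}$, and these projections need \emph{not} be isotropic: writing $\eta_i = \eta_i' + \eta_i''$ with respect to the symplectic splitting $H^1 = p(T) \oplus p(T)^{\perp}$, one gets $\langle \eta_i', \eta_j' \rangle = -\langle \eta_i'', \eta_j'' \rangle$, which has no reason to vanish. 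So isotropy alone does not yield your claimed inequality $\dim \operatorname{span}\{\gamma_i\} \leq k$ on every horizontally periodic surface; that upper bound also requires Wright's machinery. What the hypothesis $\text{Twist} = \text{Pres}$ buys cheaply is the \emph{opposite} inequality: since $\ker p \cap T \subseteq \text{Pres}(M,\mathcal{M})$ and $p(\text{Pres}(M,\mathcal{M})) = p(T) \cap U^{\perp}$, while $p(\text{Twist}(M,\mathcal{M})) \subseteq p(T) \cap U$ is isotropic in the $2k$-dimensional symplectic space $p(T)$ of Theorem \ref{ForniConst}, equality of the two spaces forces $\dim\bigl(p(T) \cap U^{\perp}\bigr) \leq k$, so the span of the functionals has dimension $2k - \dim\bigl(p(T) \cap U^{\perp}\bigr) \geq k$. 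That lower bound already gives ``at least $k$ horizontal cylinders,'' which is all this paper ever uses. The assertion that the span is \emph{exactly} $k$---equivalently that $p(\text{Twist}(M,\mathcal{M}))$ is Lagrangian---is the part for which the appeal to Wright's Lemma 8.8 and Corollary 8.12 is genuinely indispensable, so your proof stands or falls with that citation rather than with the symplectic heuristic you offer in its place.
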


The matrices $a_s$ and $u_t$ were defined in Section \ref{RELDefSect}.

\begin{theorem}\cite[Thm. 5.1]{WrightCylDef}
\label{WrightCylDefThm}
Let $M$ be a translation surface, and let $\mathcal{C}$ be an equivalence class of horizontal cylinders on $M$.  Then for all $s,t \in \mathbb{R}$, the surface $a_s^{\mathcal{C}}(u_t^{\mathcal{C}}(\mathcal{M}))$ remains in the $\text{GL}_2(\mathbb{R})$-orbit closure of $M$.
\end{theorem}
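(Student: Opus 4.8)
The plan is to reduce the theorem to a single cohomological statement---that the ``cylinder class'' of the equivalence class $\mathcal{C}$ lies in the real tangent space $T_{\mathbb{R}}(\mathcal{M})$ of the orbit closure $\mathcal{M} = \overline{\mathrm{GL}_2(\mathbb{R})\cdot M}$---and then to check that this one statement simultaneously controls both the shear $u_t^{\mathcal{C}}$ and the stretch $a_s^{\mathcal{C}}$.

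First I would set up the cohomological description of the deformation. Let $\Sigma$ be the zero set of $\omega$ and define the \emph{cylinder class} $c_{\mathcal{C}} \in H^1(X,\Sigma;\mathbb{R})$ by letting $\langle c_{\mathcal{C}}, \delta\rangle$ be the total vertical holonomy of $\delta$ inside $\bigcup_{C\in\mathcal{C}} C$; equivalently $c_{\mathcal{C}}$ is Poincar\'e--Lefschetz dual to the (height-weighted) sum of the core curves of the cylinders in $\mathcal{C}$. A direct computation of how shearing and stretching a cylinder change relative periods shows that, in period coordinates, $u_t^{\mathcal{C}}$ adds $t\,c_{\mathcal{C}}$ to $\mathrm{Re}(\omega)$ while fixing $\mathrm{Im}(\omega)$, and $a_s^{\mathcal{C}}$ adds $(e^s-1)c_{\mathcal{C}}$ to $\mathrm{Im}(\omega)$ while fixing $\mathrm{Re}(\omega)$. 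Thus the entire two-parameter family $a_s^{\mathcal{C}}u_t^{\mathcal{C}}M$ stays inside the complex affine line $\omega + \mathbb{C}\cdot c_{\mathcal{C}}$.

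Next I would invoke the structural identity $T_{\mathbb{C}}(\mathcal{M}) = \mathbb{C}\otimes T_{\mathbb{R}}(\mathcal{M})$ recalled in the Preliminaries. Since $\mathcal{M}$ is locally cut out by real-linear equations in period coordinates, the line $\mathbb{C}\cdot c_{\mathcal{C}}$ is tangent to $\mathcal{M}$ if and only if $c_{\mathcal{C}} \in T_{\mathbb{R}}(\mathcal{M})$; and because the shear/stretch path preserves the cylinders of $\mathcal{C}$ and carries $c_{\mathcal{C}}$ to the cylinder class of the deformed surface, once tangency holds at one point it holds along the whole path, so integrating the resulting linear flow keeps the surface in $\mathcal{M}$ for all $s,t$. (For the stretch this is valid as long as the cylinders persist and no zeros collapse, exactly the caveat already seen in Theorem \ref{RELDefs}.) Hence the theorem reduces to proving $c_{\mathcal{C}} \in T_{\mathbb{R}}(\mathcal{M})$.

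The main obstacle is this last inclusion. The easy half is the total class: if $M$ is horizontally periodic then $\sum_{\text{all }C} c_C = \mathrm{Im}(\omega)$, and $\mathrm{Im}(\omega) \in T_{\mathbb{R}}(\mathcal{M})$ always, being the generator of the horocycle direction of the $\mathrm{GL}_2(\mathbb{R})$-action. The real work is to \emph{split} this total class along equivalence classes, i.e. to show that each summand $c_{\mathcal{C}_i}$ individually lies in $T_{\mathbb{R}}$. Here I would exploit the defining property of the equivalence relation: inequivalent classes of cylinders can be made non-parallel by a deformation inside $\mathcal{M}$. I would deform within $\mathcal{M}$ and apply $\mathrm{GL}_2(\mathbb{R})$ to rotate the classes $\mathcal{C}_i$ into distinct directions, so that the imaginary part of $\omega$ in each of these periodic directions---again a tangent vector---isolates the corresponding cylinder class; taking a limit inside the closed set $\mathcal{M}$, using non-divergence of the relevant flows from \cite{EskinMirzakhaniMohammadiOrbitClosures}, then exhibits $c_{\mathcal{C}}$ itself as an element of $T_{\mathbb{R}}(\mathcal{M})$. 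Controlling this separation-and-limit argument---and in particular ensuring that the cylinders of $\mathcal{C}$ persist to the limit while those of the other classes drift away---is the crux of the proof; the preceding cohomological bookkeeping is routine by comparison.
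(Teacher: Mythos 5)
The paper offers no proof of this statement---it is imported verbatim from \cite{WrightCylDef}---so your attempt has to be measured against Wright's original argument. Your reduction is correct and coincides with the skeleton of that argument: writing $c_{\mathcal{C}} = \sum_{C_i \in \mathcal{C}} h_i\,\eta_{\gamma_i} \in H^1(X,\Sigma;\mathbb{R})$ (heights times duals of core curves), the twist $u_t^{\mathcal{C}}$ changes period coordinates by $t\,c_{\mathcal{C}}$ in the real part and the stretch $a_s^{\mathcal{C}}$ by $(e^s-1)\,c_{\mathcal{C}}$ in the imaginary part, so by $T_{\mathbb{C}}(\mathcal{M}) = \mathbb{C} \otimes T_{\mathbb{R}}(\mathcal{M})$ everything reduces to the single inclusion $c_{\mathcal{C}} \in T_{\mathbb{R}}(\mathcal{M})$; this is exactly Wright's formulation that the derivative of the standard cylinder deformation is tangent to $\mathcal{M}$. (One small point: for finite $s$ the stretch scales heights by $e^s > 0$, so the cylinders of $\mathcal{C}$ persist automatically and the conclusion genuinely holds for all $s,t \in \mathbb{R}$; your persistence caveat is only relevant for the $s \to -\infty$ collapses used elsewhere in this paper, not for the theorem as stated.)

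The genuine gap is in your isolation step, which you correctly identify as the crux but whose proposed mechanism would fail. After deforming inside $\mathcal{M}$ so that inequivalent classes become non-parallel, the direction of $\mathcal{C}$ is in general no longer completely periodic: the complement of $\bigcup_{C \in \mathcal{C}} C$ in that direction typically contains minimal components. The identity you lean on, $\mathrm{Im}(\omega) = \sum_i h_i\,\eta_{\gamma_i}$, is valid only in completely periodic directions; for the rotated form, $\mathrm{Im}(\omega)$ records the vertical holonomy of the \emph{entire} surface, not just of the cylinders of $\mathcal{C}$, so the tangent vector it supplies does not isolate $c_{\mathcal{C}}$, and no limit justified by the non-divergence results of \cite{EskinMirzakhaniMohammadiOrbitClosures} will strip away the contribution of the complement. (Note also that $\mathrm{GL}_2(\mathbb{R})$ acts globally and cannot rotate one class relative to another; the separation is entirely the job of the deformation in $\mathcal{M}$.) Wright's actual proof never leaves the horizontally periodic surface: the full horocycle flow $u_t$ twists each horizontal cylinder $C_i$ at rate $h_i$ modulo its circumference $c_i$ (a full Dehn twist returns the same point of the stratum), so it induces a linear flow on a finite-dimensional twist torus whose orbit closure is a subtorus consisting entirely of surfaces in the closed set $\mathcal{M}$. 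Because cylinders in distinct classes are by definition not $\mathcal{M}$-parallel, the ratios of their moduli are not locally constant and can be perturbed within $\mathcal{M}$ to be rationally independent across classes; the subtorus then contains the one-parameter family twisting the class $\mathcal{C}$ alone, and closedness of $\mathcal{M}$ transfers the conclusion back to the original surface, giving $c_{\mathcal{C}} \in T_{\mathbb{R}}(\mathcal{M})$. This recurrence-on-the-twist-torus argument, rather than a separate-the-directions-and-take-limits argument, is the missing idea in your sketch.
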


\begin{corollary}
\label{Rk2RedConfig}
Let $\cM$ be a rank two affine manifold with non-trivial Forni subspace.  Then $\cM$ contains a translation surface satisfying either Configuration 1), 2), 3), or 4).
\end{corollary}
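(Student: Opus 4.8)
The plan is to invoke Wright's cylinder deformation theorem to manufacture a surface whose horizontal core curves span a two-dimensional subspace, and then to feed this surface directly into the classification of Lemma \ref{Gen3TopConfigs}. First I would apply Theorem \ref{WrightCylDef} with $k = \text{Rank}(\cM) = 2$: this produces a horizontally periodic translation surface $M \in \cM$ whose horizontal core curves span a subspace of $T^M(\cM)^*$ of dimension two. Because the Forni subspace is constant along the orbit closure by Theorem \ref{ForniConst}, we have $\dim F = 2$ on all of $\cM$, in particular on $M$, so the hypotheses of Lemma \ref{Gen3TopConfigs} are satisfied for this surface.

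Next I would translate the statement about $T^M(\cM)^*$ into a statement about homological dimension. Each core curve $\gamma_i \in H_1(X,\bR)$ determines a linear functional $\lambda \mapsto \lambda(\gamma_i)$ on the tangent space $T^M(\cM) \subset H^1(X,\Sigma,\bR)$, and this functional is precisely the image of $\gamma_i$ under the evaluation map $H_1(X,\bR) \rightarrow T^M(\cM)^*$. Were the core curves to span a subspace of $H_1(X,\bR)$ of dimension at most one, their images in $H_1(X,\Sigma,\bR)$ would be scalar multiples of a single class, so the associated functionals would span a subspace of $T^M(\cM)^*$ of dimension at most one, contradicting Theorem \ref{WrightCylDef}. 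Hence the homological dimension of the span of the core curves is at least two. On the other hand, the Forni Geometric Criterion \cite{ForniCriterion}, exactly as applied in the proof of Lemma \ref{Gen3TopConfigs}, bounds this homological dimension above by two whenever the Forni subspace is nontrivial. Therefore the homological dimension equals two.

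Finally I would invoke Lemma \ref{Gen3TopConfigs} directly. Pinching the core curves of every horizontal cylinder on $M$ yields one of the six degenerate surfaces of Table \ref{MainConfigTable}, and the homological dimension of the span of the core curves is $g - \sum_i g_i = 3 - \sum_i g_i$. Configurations 5) and 6) are precisely the two cases of homological dimension one (a single cylinder and a pair of homologous cylinders, respectively), whereas Configurations 1) through 4) are exactly the cases of homological dimension two. Since we have shown the homological dimension to be two, $M$ must realize one of Configurations 1), 2), 3), or 4), which is the assertion.

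The only step requiring genuine care is the identification in the second paragraph: one must confirm that the evaluation map on core curves factors through absolute homology (the core curves are absolute classes, so the map $H_1(X,\bR) \rightarrow H_1(X,\Sigma,\bR)$ is the relevant one) and that passing to $T^M(\cM)^*$ can only decrease, never increase, the dimension of the span. Granting this elementary linear-algebra observation, the corollary is a direct composition of Theorem \ref{WrightCylDef}, Theorem \ref{ForniConst}, and Lemma \ref{Gen3TopConfigs}, with no further combinatorial analysis needed.
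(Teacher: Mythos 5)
Your proof is correct and follows essentially the same route as the paper: both arguments combine Lemma \ref{Gen3TopConfigs} with Theorem \ref{WrightCylDef} to exclude Configurations 5) and 6), since those are exactly the cases in which the core curves span only a one-dimensional subspace of homology. The only cosmetic differences are that you obtain the horizontally periodic surface directly from Theorem \ref{WrightCylDef} rather than first citing \cite[Cor. 6]{SmillieWeissMinSets}, and you spell out the linear-algebra step translating the span in $T^M(\cM)^*$ into homological dimension, which the paper leaves implicit.
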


\begin{proof}
By \cite[Cor. 6]{SmillieWeissMinSets}, every orbit closure contains a horizontally periodic translation surface.  Therefore, it must admit one of the six cylinder configurations of Table \ref{MainConfigTable} by Lemma \ref{Gen3TopConfigs}.  By Theorem \ref{WrightCylDef}, Configurations 5) and 6) can be excluded because there always exists a horizontally periodic translation surface with core curves of cylinders that span a $2$-dimensional subspace of homology in a rank two orbit closure.
\end{proof}

\begin{lemma}
\label{SimpCylImpDistZeros}
If $C$ is a simple cylinder, then it cannot contain the same simple zero in both of its boundaries.
\end{lemma}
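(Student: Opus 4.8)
The plan is to argue by contradiction: I assume the simple cylinder $C$ has the same simple zero $p$ in both its top and bottom boundary and derive an impossible coincidence among the horizontal separatrices at $p$. Since $C$ is simple, its top boundary is a single saddle connection $\sigma_t$ and its bottom boundary is a single saddle connection $\sigma_b$; as each boundary component is a closed curve consisting of one saddle connection, both $\sigma_t$ and $\sigma_b$ are horizontal loops based at $p$.

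First I would record the local picture at a simple zero. Using the normal form $\omega = z\,dz$, the point $p$ has cone angle $4\pi$ and exactly four horizontal separatrices, which in counterclockwise cyclic order alternate rightward, leftward, rightward, leftward, with consecutive ones bounding a sector of cone angle $\pi$. A horizontal loop at $p$ leaves along a rightward separatrix and returns along a leftward one, and the sector of $C$ adjacent to the loop is precisely the $\pi$-sector bounded by these two separatrices; in particular the rightward and leftward separatrices used by each loop are adjacent. Since a separatrix ray belongs to a unique saddle connection and there are only two rightward separatrices, $\sigma_t$ and $\sigma_b$ must use the two \emph{distinct} rightward separatrices, while each additionally claims one leftward separatrix.

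The key step is to track on which side of each loop the cylinder lies, as this pins down which leftward separatrix each loop returns along. Because $C$ lies below $\sigma_t$, a short computation with $\omega = z\,dz$ shows that the leftward separatrix of $\sigma_t$ is the one immediately clockwise from its rightward separatrix; because $C$ lies above $\sigma_b$, the leftward separatrix of $\sigma_b$ is the one immediately counterclockwise from its rightward separatrix. The crucial point, special to the simple zero, is that the two rightward separatrices are separated by cone angle $2\pi$, so the leftward separatrix clockwise-adjacent to one of them is the \emph{same} as the leftward separatrix counterclockwise-adjacent to the other. Hence, however the two rightward separatrices are distributed between $\sigma_t$ and $\sigma_b$, the two loops are forced to return along the same leftward separatrix, which is absurd.

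I expect the only delicate point to be the sidedness bookkeeping in the third paragraph, i.e.\ correctly matching ``cylinder below the top'' and ``cylinder above the bottom'' to the clockwise- and counterclockwise-adjacent leftward separatrices. This is exactly where the hypothesis that the zero is simple is essential: for a higher-order zero there are more separatrices, leaving room for $\sigma_t$ and $\sigma_b$ to return along distinct leftward separatrices, so the coincidence is avoided and the conclusion genuinely fails.
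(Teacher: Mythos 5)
Your proof is correct. It is worth noting how it sits relative to the paper's own argument, which is a one-line proof by picture: the figure asserts that a simple zero sitting at all four corners of a simple cylinder would have cone angle at least $6\pi$, contradicting the cone angle $4\pi$ of a simple zero. You work with the same local data --- the four horizontal separatrices of a simple zero, spaced $\pi$ apart, and the two $\pi$-sectors of $C$ at the zero, one below the top loop and one above the bottom loop --- but your contradiction mechanism is different: instead of an excess-angle count, you show that after the two loops are forced onto the two distinct rightward separatrices, the sidedness constraints (cylinder below the top, above the bottom) pin both return rays to the \emph{same} leftward separatrix, contradicting the fact that a separatrix ray determines a unique horizontal leaf, hence a unique saddle connection. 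Your sidedness bookkeeping in the third paragraph checks out: with the link of the zero parameterized by $[0,4\pi)$ and rightward rays at $0, 2\pi$, leftward at $\pi, 3\pi$, the top sector is of the form $(\theta_E-\pi,\theta_E)$ and the bottom of the form $(\theta_{E'},\theta_{E'}+\pi)$, and with $\{\theta_E,\theta_{E'}\}=\{0,2\pi\}$ the two return rays coincide in both cases. One small point you use implicitly and should state: $\sigma_t \neq \sigma_b$ (if the top and bottom boundaries were the same saddle connection, the surface would close up into a flat torus with no zero), which is what licenses ``the two loops use the two distinct rightward separatrices.'' Your closing observation is also accurate and is something the paper's figure does not address: for a double zero there are six separatrices and the collision is avoided, and indeed in $\cH(2)$ a simple cylinder necessarily has the unique (double) zero on both of its boundaries, so the hypothesis that the zero is simple is genuinely needed --- consistent with the paper applying the lemma only to simple zeros.
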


\begin{proof}
See Figure \ref{SimpCylDistZerosFig}.
\end{proof}

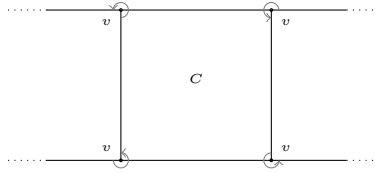
\begin{figure}
\centering
\begin{tikzpicture}[scale=0.5]
\draw (0,0)--(0,4)--(4,4)--(4,0)--cycle;
\draw (-2,0)--(0,0);
\draw [dotted] (-3,0)--(-2,0);
\draw (-2,4)--(0,4);
\draw [dotted] (-3,4)--(-2,4);
\draw (4,0)--(6,0);
\draw [dotted] (6,0)--(7,0);
\draw (4,4)--(6,4);
\draw [dotted] (6,4)--(7,4);

\foreach \x in {(0,0),(0,4),(4,4),(4,0)} \draw \x circle (1pt);

\draw(2,1.8) node[above] {\tiny $C$};

\draw(0,0) node[above left] {\tiny $v$};
\draw(0,4) node[below left] {\tiny $v$};
\draw(4,4) node[below right] {\tiny $v$};
\draw(4,0) node[above right] {\tiny $v$};

\draw [->, gray] (-.2,0) arc [radius=.2, start angle=-180, end angle= 90];
\draw [->, gray] (0,3.8) arc [radius=.2, start angle=-90, end angle= 180];
\draw [->, gray] (4.2,4) arc [radius=.2, start angle=0, end angle= 270];
\draw [->, gray] (4,.2) arc [radius=.2, start angle=90, end angle= 360];

\end{tikzpicture}
 \caption{A zero occurring at all four corners of a simple cylinder $C$ has angle at least $6\pi$.}
 \label{SimpCylDistZerosFig}
\end{figure}

Recall that a cylinder is \emph{free} if it is the only element in its equivalence class of $\mathcal{M}$-parallel cylinders.

\begin{lemma}
\label{H113CylsAllFree}
Let $M$ be a translation surface with three cylinders $C_1, C_2, C_3$ with core curves $\gamma_1, \gamma_2, \gamma_3$, respectively.  If $\gamma_1 + \gamma_2 = \gamma_3$ and the three cylinders split into at least two equivalence classes, then all three cylinders are free.
\end{lemma}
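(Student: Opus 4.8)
The plan is to translate both hypotheses into statements about the restrictions of the core-curve functionals to the tangent space $T(\mathcal{M})$ and then run a short linear-algebra argument. For each $i$ the core curve $\gamma_i \in H_1(X,\mathbb{Z})$ defines a $\mathbb{C}$-linear functional on $H^1(X,\Sigma,\mathbb{C})$ (a closed curve pairs with relative cohomology, the pairing factoring through the projection $p$ to absolute cohomology); write $f_i$ for its restriction to $T_{\mathbb{C}}(\mathcal{M})$. By \cite{WrightCylDef}, two of the cylinders $C_i,C_j$ lie in the same equivalence class precisely when they remain parallel under every local deformation inside $\mathcal{M}$; since the holonomy of the core curve at $(X,\omega)$ is the value $f_i([\omega])$, where $[\omega]$ is the tautological class, this is equivalent to $f_i$ and $f_j$ being real scalar multiples of one another. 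Thus membership in a common $\mathcal{M}$-parallel class is exactly proportionality of the restricted functionals, and I would set up this dictionary first.

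Next I would record two elementary facts. The relation $\gamma_1+\gamma_2=\gamma_3$ holds in absolute homology, hence as an identity of functionals on $H^1(X,\Sigma,\mathbb{C})$ (the pairing of a closed curve with a relative class factors through $p$, so a relation in $H_1(X)$ descends), giving $f_1+f_2=f_3$ on $T_{\mathbb{C}}(\mathcal{M})$. Moreover each $f_i$ is nonzero: the tautological class $[\omega]$ lies in $T_{\mathbb{C}}(\mathcal{M})$ and $f_i([\omega])=\int_{\gamma_i}\omega\neq 0$, since the core curve of an honest cylinder has nonzero holonomy.

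With these in hand, suppose for contradiction that the cylinders are not all free. As they fall into at least two classes, the only remaining possibility is that exactly one pair is $\mathcal{M}$-parallel while the third cylinder is free, and I would dispatch this by a three-line case analysis on $\{f_1,f_2,f_3\}$. If $f_1=c f_2$ with $c\in\mathbb{R}$, then $f_3=(1+c)f_2$, and $f_3\neq 0$ forces $1+c\neq 0$, so $f_3$ is proportional to $f_2$ as well; hence $C_1,C_2,C_3$ are pairwise $\mathcal{M}$-parallel. The case $f_1=c f_3$ is identical after writing $f_2=(1-c)f_3$ and invoking $f_2\neq 0$, and the case $f_2\parallel f_3$ is symmetric. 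In every case the assumption that two core-curve functionals are proportional forces all three to be proportional, i.e. all three cylinders lie in a single equivalence class, contradicting the hypothesis of at least two classes. Therefore no two of them are $\mathcal{M}$-parallel, each class is a singleton, and all three cylinders are free.

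The only genuinely delicate point is the identification of ``$\mathcal{M}$-parallel'' with proportionality of the restricted functionals $f_i$: this is a direct unwinding of the definition of an equivalence class (parallelism persisting under all local deformations is the same as the holonomy ratio being locally, hence globally, constant on the connected vector space $T_{\mathbb{C}}(\mathcal{M})$), together with the nonvanishing of core-curve holonomies. Once that dictionary is in place the remainder is purely formal, and nothing beyond the definitions recalled in this section and the $\text{SL}_2(\mathbb{R})$-invariance of $T(\mathcal{M})$ (which is what puts $[\omega]$ in the tangent space) is needed.
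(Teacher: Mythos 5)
Your proposal is correct and follows essentially the same route as the paper: the paper's proof also identifies $\mathcal{M}$-equivalence with a real proportionality constant between the core curves (viewed on $T(\mathcal{M})$, per \cite{WrightCylDef}) and observes that $\gamma_1 = \mu\gamma_3$ together with $\gamma_1+\gamma_2=\gamma_3$ forces $\gamma_2 = (1-\mu)\gamma_3$, collapsing all three cylinders into one class. The only difference is expository: you make explicit the nonvanishing of each functional via $f_i([\omega])=\int_{\gamma_i}\omega\neq 0$ (which rules out a zero coefficient such as $1+c=0$), a point the paper's terse proof leaves implicit.
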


\begin{proof}
If two cylinders are equivalent, then there is a constant $\mu$ relating their core curves.  For example, if $C_1$ and $C_3$ are equivalent, then $\gamma_1 = \mu \gamma_3$.  However, this would imply $\gamma_2 = (1-\mu)\gamma_3$, which implies that all three cylinders are equivalent, a contradiction.  The other combinations are left to the reader.
\end{proof}

We recall a definition from \cite{AulicinoNguyenGen3TwoZeros}.

\begin{definition}
Applying the cylinder stretch $a_s^{\mathcal{C}}$ to the equivalence class $\cC$ on a translation surface and letting $s$ tend to $-\infty$ is called a \emph{cylinder collapse}.
\end{definition}

\begin{lemma}
\label{Rank2Config4}
If $\mathcal{M} \subset \mathcal{H}(1,1,1,1)$ is a rank two affine manifold with non-trivial Forni subspace, then it does not contain a periodic surface admitting the cylinder decomposition described in Configuration 4).
\end{lemma}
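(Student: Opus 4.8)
The plan is to derive a contradiction purely from the homological relations among the four core curves together with the structure of the Forni subspace, so that no case analysis of the individual cylinder diagrams 4A) and 4B) is needed. Suppose for contradiction that $M \in \mathcal{M}$ decomposes into four horizontal cylinders $C_1, C_2, C_3, C_4$ with core curves $\gamma_1, \gamma_2, \gamma_3, \gamma_4$ as in Configuration 4). First I would read off the homology relations from the degenerate surface of Lemma \ref{Gen3TopConfigs}: pinching all core curves produces a torus $T$ joined to two spheres $S_1, S_2$ by four nodes, where, after relabeling, $C_1, C_2$ are the two necks meeting $T$ and $C_3, C_4$ are the two necks joining $S_1$ to $S_2$. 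The dual graph has first Betti number two, matching $\text{Rank}(\mathcal{M}) = 2$, and the relation that the oriented boundary circles of each part sum to zero in homology gives, after orienting the core curves, $\gamma_1 = \gamma_2$ together with $\gamma_3 + \gamma_4 = \gamma_1$, where $\gamma_3$ and $\gamma_4$ are linearly independent.

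Next I would record two facts about the equivalence classes of these cylinders. Because $C_1$ and $C_2$ are homologous, their core curves have equal absolute homology class, hence equal holonomy on every surface of $\mathcal{M}$; thus $C_1$ and $C_2$ remain parallel throughout $\mathcal{M}$, so they lie in a common $\mathcal{M}$-parallel class, and in particular $C_1$ is not free. On the other hand I claim $C_3 \not\sim C_4$. By Lemma \ref{ForniVanCyls} every core curve pairs trivially with the Forni subspace $F$, and by Theorem \ref{ForniConst} $F$ is a $2$-dimensional symplectic subspace that is the symplectic complement of the $4$-dimensional space $p(T_{\mathbb{R}}(\mathcal{M}))$; hence $\gamma_3$ and $\gamma_4$ both lie in $p(T_{\mathbb{R}}(\mathcal{M}))$. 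Since $\gamma_3, \gamma_4$ are linearly independent elements of this symplectic space, the linear functionals they define on $p(T_{\mathbb{R}}(\mathcal{M}))$ are independent, so the holonomies of $C_3$ and $C_4$ cannot stay proportional along $\mathcal{M}$; that is, $C_3$ and $C_4$ are not $\mathcal{M}$-parallel.

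Finally I would apply Lemma \ref{H113CylsAllFree} to the triple $C_1, C_3, C_4$, which satisfies $\gamma_3 + \gamma_4 = \gamma_1$. Either these three cylinders split into at least two equivalence classes, in which case the lemma forces all three to be free, contradicting the fact that $C_1$ is not free; or else all three lie in a single equivalence class, forcing $C_3 \sim C_4$ and contradicting the previous paragraph. Both alternatives are impossible, so $\mathcal{M}$ contains no surface admitting a Configuration 4) decomposition.

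I expect the main obstacle to be the bookkeeping in the first step: verifying that the homologous pair is indeed the pair of necks meeting the torus, so that the relation feeding Lemma \ref{H113CylsAllFree} involves a non-free cylinder, and confirming that $\mathcal{M}$-parallelism is detected by proportionality of the functionals $\gamma_i(\cdot)$ on $p(T_{\mathbb{R}}(\mathcal{M}))$, i.e. that the core curves $\gamma_3, \gamma_4$, being symplectically orthogonal to the Forni subspace, genuinely lie in $p(T_{\mathbb{R}}(\mathcal{M}))$. Once these identifications are pinned down, the contradiction is immediate and uniform across both diagrams of Figure \ref{Config4CylDiagFig}.
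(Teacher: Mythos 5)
Your final dichotomy fails, and it fails exactly at the point where you try to bypass the paper's degeneration argument. Lemma \ref{H113CylsAllFree} is stated for a translation surface with \emph{exactly} three cylinders, so its conclusion ``all three cylinders are free'' there just means ``no two of the three are $\cM$-parallel.'' When you apply it to the triple $C_1, C_3, C_4$ on a surface carrying a fourth cylinder $C_2$, the proof of that lemma only yields pairwise non-equivalence \emph{within the triple}; it says nothing about equivalence with $C_2$, so it cannot contradict $C_1 \sim C_2$. Concretely, the class structure $\{C_1, C_2\}$, $\{C_3\}$, $\{C_4\}$ (in your labels) is consistent with every constraint you derive: the homologous torus necks form one class, the two sphere-joining simple cylinders are each free, your orthogonality argument showing $C_3 \not\sim C_4$ holds, and the triple splits into more than one class. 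No contradiction follows from the homology relations and the equivalence-class structure alone --- indeed, this class structure is precisely what the paper's proof \emph{establishes} as an intermediate step (every cylinder free except the homologous pair), not what it refutes. So your case (a), which is the case that actually occurs, does not terminate in a contradiction, and the lemma is not proved.

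The missing idea is the second half of the paper's proof: one must \emph{use} the free simple cylinder to degenerate. By Lemma \ref{SimpCylImpDistZeros} a simple cylinder has distinct simple zeros on its two boundaries, and since it is free, Theorem \ref{WrightCylDefThm} allows a cylinder collapse inside $\cM$ that collides exactly these two zeros along a single saddle connection, producing a translation surface in $\cH(2,1,1)$. By Lemma \ref{DetLocusClosed} the resulting boundary orbit closure still has a non-trivial Forni subspace, contradicting Theorem \ref{NoAISLowerStrata} in the rank two case and Theorem \ref{Rank1PrinStratum} in the rank one case. I will add that your argument that $C_3 \not\sim C_4$ --- the Poincar\'e duals of $\gamma_3, \gamma_4$ lie in $F^{\perp} = p(T_{\mathbb{R}}(\mathcal{M}))$ by Lemma \ref{ForniVanCyls}, and non-degeneracy of the symplectic form on $p(T_{\mathbb{R}}(\mathcal{M}))$ makes the two functionals independent on the tangent space --- is correct and is a genuinely different route to the freeness of the simple cylinders than the paper's (which uses the proof of Lemma \ref{H113CylsAllFree} plus the fact that four cylinders must split into at least two classes). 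But that observation can only deliver the intermediate freeness statement; without the collapse onto a lower stratum, the proof is incomplete.
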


\begin{proof}
We claim that every cylinder diagram satisfying Configuration 4) contains a free simple cylinder.  By Lemma \ref{CylDiags4}, it is clear by inspection that every cylinder diagram satisfying Configuration 4) contains a simple cylinder.  By Lemma \ref{Gen3TopConfigs}, this configuration achieves the maximal number of possible cylinders for a translation surface in a rank two affine manifold with non-trivial Forni subspace.  Therefore, the set of cylinders must split into at least two equivalence classes.  Number the cylinders so that $C_3$ and $C_4$ are always homologous, in which case they are equivalent by the definition of cylinder equivalence.  Since the cylinders satisfy $\gamma_{i_1} + \gamma_{i_2} = \gamma_{i_3}$ for an appropriate choice of indices, the proof of Lemma \ref{H113CylsAllFree} implies that every cylinder is free except for those in the equivalence class $\{C_3, C_4\}$.  Hence, the simple cylinder is free.

By Lemma \ref{SimpCylImpDistZeros}, the simple cylinder, say $C_1$, has distinct simple zeros on its boundaries.  Collapse $C_1$ so that the zeros collide.  Since this results in a single saddle connection collapsing that connects distinct zeros, the resulting surface lies in the stratum $\mathcal{H}(2,1,1)$.  By Lemma \ref{DetLocusClosed}, it must also have non-trivial Forni subspace, and by Theorems \ref{NoAISLowerStrata} and \ref{Rank1PrinStratum}, this is not possible and produces the desired contradiction.
\end{proof}

\begin{lemma}
\label{Rank2Config3}
If $\mathcal{M} \subset \mathcal{H}(1,1,1,1)$ is a rank two affine manifold with non-trivial Forni subspace, then it does not contain a periodic surface admitting the cylinder decomposition described in Configuration 3).
\end{lemma}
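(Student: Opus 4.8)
The plan is to degenerate, within genus three, to the stratum $\mathcal{H}(2,1,1)$ and derive a contradiction from Theorems \ref{NoAISLowerStrata} and \ref{Rank1PrinStratum}, exactly in the spirit of the proof of Lemma \ref{Rank2Config4}. By Corollary \ref{Rk2RedConfig} we are already reduced to Configuration 3), and by Lemma \ref{CylDiags3} the surface $M$ realizes one of Cylinder Diagrams 3A), 3B), or 3C). The two cylinders $C_2, C_3$ that build the torus are homologous, hence $\mathcal{M}$-parallel, so they lie in a single equivalence class. Since the homological dimension of the core curves is $3 - 1 = 2$, the classes $\gamma_1$ and $\gamma_2 = \gamma_3$ are homologically independent; consequently $C_1$ cannot be equivalent to $C_2$, for otherwise all three core curves would span only a line in $T^M(\mathcal{M})^*$ and force $\text{Rank}(\mathcal{M}) = 1$. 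Hence $C_1$ is free, and $\{C_2, C_3\}$ is the only non-trivial equivalence class.

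First I would handle the diagram in which $C_1$ is a free \emph{simple} cylinder (Diagram 3A). Using Theorem \ref{WrightCylDefThm} I collapse the equivalence class $\{C_1\}$ by letting $a_s^{\{C_1\}}$ act with $s \to -\infty$. By Lemma \ref{SimpCylImpDistZeros} the two boundary saddle connections of $C_1$ join distinct simple zeros, so this cylinder collapse contracts a single saddle connection between distinct zeros and merges two simple zeros into a double zero, yielding a genus-three surface $M'$ in $\mathcal{H}(2,1,1)$. By Lemma \ref{DetLocusClosed} the orbit closure $\mathcal{M}'$ of $M'$ still has a non-trivial Forni subspace, which is impossible by Theorems \ref{NoAISLowerStrata} and \ref{Rank1PrinStratum}. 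This gives the desired contradiction whenever the free cylinder is simple with disjoint boundary zeros.

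The hard part will be Diagrams 3B) and 3C), where the free cylinder need not be simple (in 3B the boundaries of $C_1$ carry several saddle connections) or where a direct collapse contracts a saddle connection that closes into a node, so that a naive collapse of $C_1$ could pinch an essential curve and drop the genus. For these I would first test whether $\text{Twist}(M, \mathcal{M}) \neq \text{Pres}(M, \mathcal{M})$: if so, Lemma \ref{WrightLem86} produces a surface in $\mathcal{M}$ with more cylinders, necessarily Configuration 4) by Lemma \ref{Gen3TopConfigs}, which is already excluded by Lemma \ref{Rank2Config4}. In the remaining case $\text{Twist}(M, \mathcal{M}) = \text{Pres}(M, \mathcal{M})$, I would exploit the independent deformations of the classes $\{C_1\}$ and $\{C_2, C_3\}$ furnished by Theorem \ref{WrightCylDefThm} to move within $\mathcal{M}$ to a (horizontally or suitably transverse) periodic surface carrying a free simple cylinder whose collapse contracts only a saddle connection between distinct zeros, again landing in $\mathcal{H}(2,1,1)$ in genus three. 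The crux is the diagram-by-diagram combinatorial verification that such a genus-preserving collapse into $\mathcal{H}(2,1,1)$ always exists; this is where the explicit structure of Diagrams 3B) and 3C) from Figure \ref{Config3BCCylDiagsFig} must be invoked, tracking which zeros lie on the top versus the bottom of the collapsing cylinder, precisely the zero-disjointness bookkeeping carried out for the rank-one case in Lemmas \ref{CylDiags3BC} and \ref{CylDiags3A}.
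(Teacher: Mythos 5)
Your overall strategy---collapse a free cylinder to collide two distinct simple zeros, land in $\mathcal{H}(2,1,1)$ in genus three, and contradict Theorems \ref{NoAISLowerStrata} and \ref{Rank1PrinStratum} via Lemma \ref{DetLocusClosed}---is exactly the paper's, and your treatment of the case where the free cylinder $C_1$ is itself simple matches the paper's proof. But there is a genuine gap where you yourself flag ``the crux'': for the diagrams in which $C_1$ is not simple (this includes 3B) \emph{and} 3C), where $C_1$ carries saddle connection $1$ on both its top and its bottom; your assignment of the simple case to 3A) alone is off), you never actually produce a free simple cylinder to collapse. Your fallback is a Twist/Pres dichotomy followed by an appeal to ``independent deformations'' of the two equivalence classes to reach a surface admitting a good collapse, with the decisive diagram-by-diagram verification deferred. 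That verification is the whole content of the remaining case, and the REL machinery you implicitly lean on (Theorem \ref{RELDefs}, and the bookkeeping of Lemmas \ref{CylDiags3BC} and \ref{CylDiags3A}) is not available here: Theorem \ref{RELDefs} requires rank one with non-trivial REL, whereas $\mathcal{M}$ is rank two.

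The paper closes this case with a short argument you are missing, and it needs no Twist/Pres analysis at all. Since $C_1$ has a saddle connection appearing on both its top and its bottom, $C_1$ contains a simple cylinder $C$ transverse to the horizontal direction, and by inspection no cylinder parallel to $C$ is entirely contained in $C_1$; because $C_1$ is free, \cite[Prop. 3.3(b)]{NguyenWright} then forces $C$ itself to be free. Lemma \ref{SimpCylImpDistZeros} gives distinct zeros on the two boundaries of $C$, and Theorem \ref{WrightCylDefThm} collapses the singleton class $\{C\}$ inside $\mathcal{M}$, contracting a single saddle connection between distinct zeros (such a saddle connection is never a closed curve, so the genus cannot drop---your worry about pinching is thereby handled automatically). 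Freeness of $C$ is essential here: without it, Theorem \ref{WrightCylDefThm} deforms the whole equivalence class of $C$ and you lose control of what degenerates. A secondary gloss: your justification that $C_1$ is free (equivalence of all three cylinders would ``force $\text{Rank}(\mathcal{M})=1$'') is not valid for a single surface as stated, since $\mathcal{M}$-parallelism is proportionality of core curves as functionals on $T_M(\mathcal{M})$, which homological independence does not preclude; the paper instead uses that $M$ realizes the maximal number of cylinders (Configuration 4) being excluded by Lemma \ref{Rank2Config4}), so that the span of the core curves in $T^M(\mathcal{M})^*$ must be $2$-dimensional, yielding at least two equivalence classes.
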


\begin{proof}
By contradiction, let $M \in \cM$ be a horizontally periodic translation surface satisfying Configuration 3).  There must be at least two equivalence classes of cylinders because $M$ realizes the maximum number of cylinders for a periodic translation surface in $\cM$ by Lemmas \ref{Gen3TopConfigs} and \ref{Rank2Config4}.  Number the cylinders so that $C_2$ and $C_3$ are always homologous, whence equivalent.  Then $C_1$ is a free cylinder.  By inspection, $C_1$ either contains a simple cylinder or it is a simple cylinder.  See Figures \ref{Config3ACylDiagsFig} and \ref{Config3BCCylDiagsFig}.  Furthermore, it is easy to see that if $C_1$ contains a simple cylinder $C$, then there is no cylinder parallel to $C$ that is entirely contained in $C_1$.  By \cite[Prop. 3.3(b)]{NguyenWright}, this simple cylinder is free, and by Lemma \ref{SimpCylImpDistZeros}, it has distinct zeros on top and bottom.

Using Theorem \ref{WrightCylDefThm} to collapse $C$, or $C_1$ if $C_1$ is a simple cylinder, so that the zeros collide, produces a translation surface in $\mathcal{H}(2,1,1)$.  As above, this contradicts Theorems \ref{NoAISLowerStrata} and \ref{Rank1PrinStratum}.
\end{proof}

\begin{lemma}
\label{Rank2Config2}
If $\mathcal{M} \subset \mathcal{H}(1,1,1,1)$ is a rank two affine manifold with non-trivial Forni subspace, then it does not contain a periodic surface admitting the cylinder decomposition described in Configuration 2).
\end{lemma}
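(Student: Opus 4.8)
The plan is to follow the collapse strategy of Lemmas \ref{Rank2Config4} and \ref{Rank2Config3}: starting from a periodic surface $M \in \mathcal{M}$ realizing Configuration 2), I would degenerate within the boundary of $\mathcal{M}$ so as to stay in genus three but drop into the lower stratum $\mathcal{H}(2,1,1)$, and then apply Lemma \ref{DetLocusClosed} together with Theorems \ref{NoAISLowerStrata} and \ref{Rank1PrinStratum} to reach a contradiction. First I would record the homological relation forced by the configuration: since the bottom of $C_3$ is identified with the tops of $C_1$ and $C_2$, the core curves satisfy $\gamma_1 + \gamma_2 = \gamma_3$. Exactly as in Lemmas \ref{Rank2Config4} and \ref{Rank2Config3}, once Configuration 4) has been excluded the three cylinders of Configuration 2) realize the maximal number of cylinders allowed in a rank two manifold with non-trivial Forni subspace (Lemmas \ref{Gen3TopConfigs} and \ref{Rank2Config4}), so they split into at least two equivalence classes. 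Combined with $\gamma_1 + \gamma_2 = \gamma_3$, Lemma \ref{H113CylsAllFree} then forces all of $C_1, C_2, C_3$ to be free.

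The geometric feature peculiar to Configuration 2) is that the sphere part of the degenerate surface carries a single simple zero $v_1$. I would first make precise that the top of $C_1$ consists of a single horizontal saddle connection that is a loop based at $v_1$. Indeed, the three half-cylinders meeting the sphere (the tops of $C_1$ and $C_2$ and the bottom of $C_3$) are bounded by the saddle connections issuing from $v_1$, and a short Euler-characteristic count on the sphere ($V=1$, hence $E=2$ and $F=3$) shows that the tops of $C_1$ and $C_2$ are each a single loop at $v_1$ while the bottom of $C_3$ is their union. In particular the only zero on the top of $C_1$ is $v_1$, whereas every zero on the bottom of $C_1$ lies on the torus part and hence is one of the remaining simple zeros $v_2, v_3, v_4$, all distinct from $v_1$.

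With this in hand I would collapse $C_1$. Because $C_1$ is free, Theorem \ref{WrightCylDefThm} allows twisting and stretching $C_1$ alone while staying in the orbit closure: apply $u_t^{\{C_1\}}$ to slide $v_1$ directly above a single bottom zero $v_i$, producing one vertical saddle connection from $v_1$ to $v_i$, and then apply $a_s^{\{C_1\}}$ with $s \to -\infty$ to send the height of $C_1$ to zero. Since horizontal lengths are preserved, the only saddle connection whose length tends to zero is the vertical one from $v_1$ to $v_i$; every other bottom zero sits below a regular interior point of the top loop and so merely becomes a marked point rather than a collision. Thus exactly one saddle connection between the distinct simple zeros $v_1$ and $v_i$ collapses, no homologically non-trivial curve pinches, and the limit surface $M'$ stays in genus three and lands in $\mathcal{H}(2,1,1)$.

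Finally, the orbit closure $\mathcal{M}'$ of $M'$ lies in the boundary of $\mathcal{M}$ in the same genus, so Lemma \ref{DetLocusClosed} gives it a non-trivial Forni subspace; being symplectic (Theorem \ref{ForniConst}) this subspace is at least two-dimensional, which forces $\mathcal{M}'$ to have rank one or two. This contradicts Theorems \ref{NoAISLowerStrata} and \ref{Rank1PrinStratum} (with Theorem \ref{NoZeroInNonVary} covering the Teichm\"uller curve case), completing the proof. I expect the main obstacle to be precisely the clean-collapse step: checking that, after the twist positioning $v_1$ over exactly one bottom zero, a single saddle connection between distinct zeros degenerates so that the genus is preserved and the limit lands in $\mathcal{H}(2,1,1)$ rather than in a deeper stratum or a surface of lower genus. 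This is where the single-zero structure of the sphere part is essential, and it is the only point at which the argument departs from the simple-cylinder collapses used for Configurations 3) and 4).
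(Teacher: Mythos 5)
Your proposal is correct and follows essentially the same route as the paper's proof: freeness of all three cylinders via Lemma \ref{H113CylsAllFree} (using $\gamma_1 + \gamma_2 = \gamma_3$ and the maximality of three cylinders after Lemma \ref{Rank2Config4}), then twisting and collapsing the cylinder whose sphere-side boundary is a single saddle connection at the lone zero $v_1$, so that exactly one saddle connection between distinct simple zeros degenerates, landing in $\mathcal{H}(2,1,1)$ and contradicting Theorems \ref{NoAISLowerStrata} and \ref{Rank1PrinStratum} via Lemma \ref{DetLocusClosed}. Your Euler-characteristic count and clean-collapse verification simply make explicit details the paper's terser proof leaves implicit.
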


\begin{proof}
By contradiction, let $M \in \cM$ be a horizontally periodic translation surface satisfying Configuration 2).  There must be at least two equivalence classes of cylinders because $M$ realizes the maximum number of cylinders for a periodic translation surface in $\cM$ by Lemmas \ref{Gen3TopConfigs} and \ref{Rank2Config4}.  By Lemma \ref{H113CylsAllFree}, all three cylinders are free.  Choose one of the cylinders with a single saddle connection in its base, twist it so that it contains a vertical saddle connection, and collapse it.  The vertical saddle connection was between two distinct zeros, so the resulting translation surface lies in $\mathcal{H}(2,1,1)$.  As above, this contradicts Theorems \ref{NoAISLowerStrata} and \ref{Rank1PrinStratum}.
\end{proof}

\begin{lemma}
\label{Rank2Config1}
If $\mathcal{M} \subset \mathcal{H}(1,1,1,1)$ is a rank two affine manifold with non-trivial Forni subspace, then it does not contain a periodic surface admitting the cylinder decomposition described in Configuration 1).
\end{lemma}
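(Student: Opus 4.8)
The plan is to reduce a Configuration 1) surface to one of the configurations already excluded in Lemmas \ref{Rank2Config2}, \ref{Rank2Config3}, and \ref{Rank2Config4}, or else to degenerate it into $\mathcal{H}(2,1,1)$. By contradiction, suppose $M \in \mathcal{M}$ is horizontally periodic and satisfies Configuration 1), so that $M$ decomposes into exactly two non-homologous horizontal cylinders $C_1, C_2$. By Lemma \ref{SC2Cyls} one of them, say $C_1$, carries a saddle connection $\sigma$ appearing on both its top and its bottom, which means that the two sides of $\sigma$ are the top boundary and the bottom boundary of $C_1$.

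First I would dispose of the easy case via the twist/preserving dichotomy. If $\text{Twist}(M, \mathcal{M}) \neq \text{Pres}(M, \mathcal{M})$, then Lemma \ref{WrightLem86} produces a horizontally periodic surface in $\mathcal{M}$ with at least three horizontal cylinders. Since the Forni subspace is $2$-dimensional, Lemma \ref{Gen3TopConfigs} shows that the only cylinder configurations with three or more cylinders are Configurations 2), 3), and 4) (three homologous cylinders being impossible in genus three), so such a surface contradicts Lemmas \ref{Rank2Config2}, \ref{Rank2Config3}, and \ref{Rank2Config4}. Hence it remains only to treat the case $\text{Twist}(M, \mathcal{M}) = \text{Pres}(M, \mathcal{M})$, in which Lemma \ref{WrightLem86} gives no new cylinders and we are stuck with only two horizontal cylinders.

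In that case I would use $\sigma$ to manufacture a simple cylinder transverse to the horizontal direction, in the spirit of the Configuration 5) argument inside the proof of Lemma \ref{3PlusCyls}. Applying the generalized twist $u_t^{\mathcal{C}}$ to the equivalence class $\mathcal{C}$ of $C_1$ shears the top boundary of $C_1$ past its bottom boundary by $t\,h_1$, and by Theorem \ref{WrightCylDefThm} the resulting surface remains in $\mathcal{M}$. For an appropriate value of $t$ the copy of $\sigma$ on the top of $C_1$ lies directly above the copy on the bottom; since these two copies are identified and have equal length, the vertical trajectories through $C_1$ joining them close up into a simple cylinder $C$ whose two boundaries are both $\sigma$. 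By Lemma \ref{SimpCylImpDistZeros} the endpoints of $\sigma$ are distinct simple zeros, so in particular $\sigma$ is not a loop. I would then argue, via \cite[Prop. 3.3(b)]{NguyenWright}, that $C$ is free and collapse it by the cylinder collapse $a_s^{C}$, $s \to -\infty$, of Theorem \ref{WrightCylDefThm}. Because $\sigma$ joins two distinct simple zeros, this merges them into a single double zero while keeping the genus equal to three, producing a surface in $\mathcal{H}(2,1,1)$. By Lemma \ref{DetLocusClosed} the resulting orbit closure still carries a non-trivial Forni subspace, contradicting Theorems \ref{NoAISLowerStrata} and \ref{Rank1PrinStratum}.

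I expect the main obstacle to be the freeness of the transverse cylinder $C$. Unlike Configurations 2)--4), a Configuration 1) surface realizes only two cylinders, so the shortcut used there---that attaining the maximal cylinder count forces at least two equivalence classes and hence a free simple cylinder---is unavailable. The delicate point is to verify that no other cylinder is $\mathcal{M}$-parallel to $C$, so that collapsing its equivalence class merges exactly the two zeros at the ends of $\sigma$ and does not drop the genus; this is precisely where the standing hypothesis $\text{Twist}(M, \mathcal{M}) = \text{Pres}(M, \mathcal{M})$ together with the homological independence of the two horizontal core curves should be brought to bear, either through \cite[Prop. 3.3(b)]{NguyenWright} or through a direct analysis of the cylinders parallel to $C$.
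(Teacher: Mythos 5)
Your overall strategy is the paper's: extract the saddle connection $\sigma$ on the top and bottom of $C_1$ (Lemma \ref{SC2Cyls}), build from it a simple cylinder transverse to the horizontal direction, show it is free, collapse it so the two distinct simple zeros guaranteed by Lemma \ref{SimpCylImpDistZeros} merge, land in $\mathcal{H}(2,1,1)$, and contradict Theorems \ref{NoAISLowerStrata} and \ref{Rank1PrinStratum} via Lemma \ref{DetLocusClosed}. Your preliminary $\text{Twist}/\text{Pres}$ dichotomy is harmless but unnecessary, and the twist deformation you use to align the two copies of $\sigma$ can be avoided entirely: the simple cylinder already exists on $M$ in the slanted direction joining the bottom copy of $\sigma$ to the top copy, which is how the paper defines $C_1' \subset C_1$. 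The genuine gap is exactly the point you flag yourself: you never prove that this transverse cylinder $C$ is free, and the tool you cite, \cite[Prop.\ 3.3(b)]{NguyenWright}, does not straightforwardly apply here. In Lemma \ref{Rank2Config3} the paper could invoke that proposition because the cylinder diagrams of Configuration 3) are few and explicit, so one checks by inspection that no cylinder parallel to the simple cylinder is entirely contained in $C_1$; Configuration 1) admits no such finite list (the $\mathcal{E}_i$ are arbitrary), so that hypothesis cannot be verified, and it is a priori possible that a cylinder $\mathcal{M}$-parallel to $C$ sits inside $C_1$. It is also unclear how $\text{Twist}(M,\mathcal{M}) = \text{Pres}(M,\mathcal{M})$, a statement about the \emph{horizontal} direction, or the homological independence of the two horizontal core curves, would control equivalence classes in the transverse direction; you gesture at these but give no argument.

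The paper closes this gap by a different mechanism, which you would need (or an equivalent) to complete the proof. If $C_1'$ is not free, choose $C_2'$ parallel to it; by \cite[Prop.\ 3.2]{NguyenWright} one has $C_2' \subset C_1$, so $\overline{C_1' \cup C_2'}$ is a \emph{proper} subset of $M$. By \cite[Cor.\ 6]{SmillieWeissMinSets} there exists $M' \in \mathcal{M}$ on which $C_1'$ and $C_2'$ persist and which is periodic in their direction; properness then forces at least three cylinders in that direction, contradicting Lemma \ref{Gen3TopConfigs} combined with the already-established Lemmas \ref{Rank2Config2}, \ref{Rank2Config3}, and \ref{Rank2Config4}. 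Hence $C_1'$ is free, and the rest of your argument (collapse, distinct zeros, Lemma \ref{DetLocusClosed}) goes through as written. Note this freeness argument leans on the fact that Configurations 2)--4) were excluded \emph{first}, i.e.\ on the order of the lemmas, rather than on any property of the horizontal twist space.
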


\begin{proof}
By contradiction, let $M \in \cM$ be a horizontally periodic translation surface satisfying Configuration 1).  Since Configuration 1) represents the maximum number of cylinders achievable on a horizontally periodic translation surface in $\mathcal{M}$ by Lemmas \ref{Rank2Config4}, \ref{Rank2Config3}, and \ref{Rank2Config2}, both cylinders must be free.  Since the cylinders are not homologous, one of the cylinders, say $C_1$, must have a saddle connection $\sigma$ on its top and bottom.  Let $C_1' \subset C_1$ be the simple cylinder formed by considering trajectories from $\sigma$ to itself.  Either $C_1'$ is free, or it is not.

If $C'_1$ is not free, then there is a cylinder $C'_2$ parallel to $C'_1$.  By \cite[Prop. 3.2]{NguyenWright}, $C'_2 \subset C_1$, which implies that the closure of $C'_1 \cup C'_2$ is a proper subset of $M$.  By \cite[Cor. 6]{SmillieWeissMinSets}, there exists $M' \in \cM$ such that $C'_1$ and $C'_2$ persist and $M'$ is periodic in the direction of $C'_1$.  Since $C'_1 \cup C'_2$ is a proper subset of $M$, $M'$ must decompose into at least three cylinders in the direction of $C'_1$.  This contradicts Lemmas \ref{Gen3TopConfigs}, \ref{Rank2Config4}, \ref{Rank2Config3}, and \ref{Rank2Config2}, and proves that $C_1'$ is free.

By Lemma \ref{SimpCylImpDistZeros}, $C_1'$ has distinct zeros on top and bottom.  Since $C_1'$ is free with distinct zeros on its top and bottom, we can collapse it to get a translation surface in $\cH(2,1,1)$, and achieve the same contradiction as in every lemma above.  
\end{proof}

\begin{proof}[Proof of Theorem \ref{NoRank2PrinStratum}]
By Corollary \ref{Rk2RedConfig}, every rank two affine manifold with non-trivial Forni subspace in genus three contains a horizontally periodic translation surface satisfying Configuration 1), 2), 3), or 4).  However, these configurations are excluded from existing in such an affine manifold by Lemmas \ref{Rank2Config1}, \ref{Rank2Config2}, \ref{Rank2Config3}, and \ref{Rank2Config4}, respectively.  Thus, there cannot exist a rank two affine manifold in the principal stratum with non-trivial Forni subspace.
\end{proof}

\bibliography{fullbibliotex}{}

\providecommand{\bysame}{\leavevmode\hbox to3em{\hrulefill}\thinspace}
\providecommand{\MR}{\relax\ifhmode\unskip\space\fi MR }
\providecommand{\MRhref}[2]{%
  \href{http://www.ams.org/mathscinet-getitem?mr=#1}{#2}
}
\providecommand{\href}[2]{#2}
\begin{thebibliography}{ABEM12}

\bibitem[ABEM12]{AthreyaBufetovEskinMirzakhani}
Jayadev Athreya, Alexander Bufetov, Alex Eskin, and Maryam Mirzakhani,
  \emph{Lattice point asymptotics and volume growth on {T}eichm\"uller space},
  Duke Math. J. \textbf{161} (2012), no.~6, 1055--1111. \MR{2913101}

\bibitem[AEM12]{AvilaEskinMollerForniBundle}
Artur Avila, Alex Eskin, and Martin M\"oller, \emph{Symplectic and isometric
  {S}{L}(2,{R}) invariant subbundles of the {H}odge bundle}, Preprint
  \textbf{arXiv:1209.2854v1} (2012), 1--16.

\bibitem[AN15]{AulicinoNguyenGen3TwoZeros}
David Aulicino and Duc-Manh Nguyen, \emph{Rank two affine submanifolds in
  $\mathcal{H}(2,2)$ and $\mathcal{H}(3,1)$}, Preprint
  \textbf{arxiv.org/abs/1501.03303} (2015), 1--58.

\bibitem[ANW13]{AulicinoNguyenWright}
David Aulicino, Duc-Manh Nguyen, and Alex Wright, \emph{Classification of
  higher rank orbit closures in {$\mathcal{H}^{\rm odd}(4)$}}, Preprint. To
  appear in JEMS \textbf{http://arxiv.org/abs/1308.5879} (2013), 1--23.

\bibitem[Aul12]{AulicinoCompDegKZ}
David Aulicino, \emph{{T}eichm\"uller discs with completely degenerate
  {K}ontsevich-{Z}orich spectrum}, Preprint, To Appear in Comm. Math. Helv.
  \textbf{arXiv:1205.2359v1} (2012), 1--74.

\bibitem[Aul13]{AulicinoCompDegKZAIS}
\bysame, \emph{Affine invariant submanifolds with completely degenerate
  {K}ontsevich-{Z}orich spectrum}, Preprint \textbf{arXiv:1302.0913} (2013),
  1--14.

\bibitem[AV07]{AvilaVianaSimp}
Artur Avila and Marcelo Viana, \emph{Simplicity of {L}yapunov spectra: proof of
  the {Z}orich-{K}ontsevich conjecture}, Acta Math. \textbf{198} (2007), no.~1,
  1--56. \MR{2316268 (2008m:37010)}

\bibitem[BHM14]{BainbridgeHabeggerMollerHNFilt}
Matt Bainbridge, Philipp Habegger, and Martin M{\"o}ller, \emph{Teichm\"uller
  curves in genus three and just likely intersections in $\textbf{G}_m^n \times
  \textbf{G}_a^n$}, Preprint \textbf{arXiv:1410.6835} (2014), 1--84.

\bibitem[BM10]{BouwMoeller}
Irene~I. Bouw and Martin M{\"o}ller, \emph{Teichm\"uller curves, triangle
  groups, and {L}yapunov exponents}, Ann. of Math. (2) \textbf{172} (2010),
  no.~1, 139--185.

\bibitem[CE13]{ChaikaEskinLyapExps}
Jon Chaika and Alex Eskin, \emph{Every flat surface is {B}irkhoff and
  {O}sceledets generic in almost every direction}, Preprint
  \textbf{http://arxiv.org/abs/1305.1104} (2013), 1--19.

\bibitem[EKZ11]{EskinKontsevichZorichSqTiled}
Alex Eskin, Maxim Kontsevich, and Anton Zorich, \emph{Lyapunov spectrum of
  square-tiled cyclic covers}, J. Mod. Dyn. \textbf{5} (2011), no.~2, 319--353.
  \MR{2820564 (2012h:37073)}

\bibitem[EKZ14]{EskinKontsevichZorich2}
\bysame, \emph{Sum of {L}yapunov exponents of the {H}odge bundle with respect
  to the {T}eichm\"uller geodesic flow}, Publ. Math. Inst. Hautes \'Etudes Sci.
  \textbf{120} (2014), 207--333. \MR{3270590}

\bibitem[EM13]{EskinMirzakhaniInvariantMeas}
Alex Eskin and Maryam Mirzakhani, \emph{Invariant and stationary measures for
  the $\text{SL}(2,\mathbb{R})$ action on moduli space}, Preprint \textbf{arXiv
  1302.3320} (2013), 1--171.

\bibitem[EMM13]{EskinMirzakhaniMohammadiOrbitClosures}
Alex Eskin, Maryam Mirzakhani, and Amir Mohammadi, \emph{Isolation,
  equidistribution, and orbit closures for the $\text{SL}(2,\mathbb{R})$ action
  on moduli space}, Preprint \textbf{arXiv:1305.3015} (2013), 1--45.

\bibitem[Fil14]{FilipZeroExps}
Simion Filip, \emph{Zero exponents of the {K}ontsevich-{Z}orich cocycle},
  Preprint \textbf{arXiv:1410.2129} (2014), 36.

\bibitem[FMZ14a]{ForniMatheusZorichLyapSpectHodge}
Giovanni Forni, Carlos Matheus, and Anton Zorich, \emph{Lyapunov spectrum of
  invariant subbundles of the {H}odge bundle}, Ergodic Theory Dynam. Systems
  \textbf{34} (2014), no.~2, 353--408. \MR{3233697}

\bibitem[FMZ14b]{ForniMatheusZorichZeroLyapExpsHodge}
\bysame, \emph{Zero {L}yapunov exponents of the {H}odge bundle}, Comment. Math.
  Helv. \textbf{89} (2014), no.~2, 489--535. \MR{3225454}

\bibitem[For02]{ForniDev}
Giovanni Forni, \emph{Deviation of ergodic averages for area-preserving flows
  on surfaces of higher genus}, Ann. of Math. (2) \textbf{155} (2002), no.~1,
  1--103. \MR{MR1888794 (2003g:37009)}

\bibitem[For06]{ForniHand}
\bysame, \emph{On the {L}yapunov exponents of the {K}ontsevich-{Z}orich
  cocycle}, Handbook of dynamical systems. {V}ol. 1{B}, Elsevier B. V.,
  Amsterdam, 2006, pp.~549--580. \MR{MR2186248}

\bibitem[For11]{ForniCriterion}
\bysame, \emph{A geometric criterion for the nonuniform hyperbolicity of the
  {K}ontsevich-{Z}orich cocycle}, J. Mod. Dyn. \textbf{5} (2011), no.~2,
  355--395, With an appendix by Carlos Matheus. \MR{2820565 (2012f:37061)}

\bibitem[GH14]{GrivauxHubertFullDegQuad}
Julien Grivaux and Pascal Hubert, \emph{Loci in strata of meromorphic quadratic
  differentials with fully degenerate {L}yapunov spectrum}, J. Mod. Dyn.
  \textbf{8} (2014), no.~1, 61--73. \MR{3296566}

\bibitem[Kon97]{KZHodge}
M.~Kontsevich, \emph{Lyapunov exponents and {H}odge theory}, The mathematical
  beauty of physics ({S}aclay, 1996), Adv. Ser. Math. Phys., vol.~24, World
  Sci. Publ., River Edge, NJ, 1997, pp.~318--332. \MR{MR1490861 (99b:58147)}

\bibitem[LNW15]{LanneauNguyenWrightFinInNonArithRkOne}
Erwan Lanneau, Duc-Manh Nguyen, and Alex Wright, \emph{Finiteness of
  {T}eichm\"uller curves in non-arithmetic rank $1$ orbit closures}, Preprint
  \textbf{arXiv:1504.03742} (2015), 1--18.

\bibitem[Mas75]{MasurThesis}
Howard Masur, \emph{On a class of geodesics in {T}eichm\"uller space}, Ann. of
  Math. (2) \textbf{102} (1975), no.~2, 205--221. \MR{0385173 (52 \#6038)}

\bibitem[McM06]{McMullenDecagonUnique}
Curtis~T. McMullen, \emph{Teichm\"uller curves in genus two: torsion divisors
  and ratios of sines}, Invent. Math. \textbf{165} (2006), no.~3, 651--672.
  \MR{2242630 (2007f:14023)}

\bibitem[M{\"o}l11]{MollerShimuraTeich}
Martin M{\"o}ller, \emph{Shimura and {T}eichm\"uller curves}, J. Mod. Dyn.
  \textbf{5} (2011), no.~1, 1--32. \MR{2787595}

\bibitem[NW14]{NguyenWright}
Duc-Manh Nguyen and Alex Wright, \emph{Non-{V}eech surfaces in
  {$\mathcal{H}^{\rm hyp}(4)$} are generic}, Geom. Funct. Anal. \textbf{24}
  (2014), no.~4, 1316--1335. \MR{3248487}

\bibitem[SW04]{SmillieWeissMinSets}
John Smillie and Barak Weiss, \emph{Minimal sets for flows on moduli space},
  Israel J. Math. \textbf{142} (2004), 249--260. \MR{2085718 (2005g:37067)}

\bibitem[Wri12]{WrightSchwarzAbSqTilSurfs}
Alex Wright, \emph{Schwarz triangle mappings and {T}eichm\"uller curves:
  abelian square-tiled surfaces}, J. Mod. Dyn. \textbf{6} (2012), no.~3,
  405--426. \MR{2988814}

\bibitem[Wri13]{WrightSchwarzVWBMCurves}
\bysame, \emph{Schwarz triangle mappings and {T}eichm\"uller curves: the
  {V}eech-{W}ard-{B}ouw-{M}\"oller curves}, Geom. Funct. Anal. \textbf{23}
  (2013), no.~2, 776--809. \MR{3053761}

\bibitem[Wri14]{WrightFieldofDef}
\bysame, \emph{The field of definition of affine invariant submanifolds of the
  moduli space of abelian differentials}, Geom. Topol. \textbf{18} (2014),
  no.~3, 1323--1341. \MR{3254934}

\bibitem[Wri15a]{WrightCylDef}
\bysame, \emph{Cylinder deformations in orbit closures of translation
  surfaces}, Geom. Topol. \textbf{19} (2015), no.~1, 413--438. \MR{3318755}

\bibitem[Wri15b]{WrightSurvey}
\bysame, \emph{Translation surfaces and their orbit closures: {A}n introduction
  for a broad audience}, EMS Surv. Math. Sci. \textbf{2} (2015), no.~1,
  63--108. \MR{3354955}

\bibitem[YZ13]{YuZuoWeierstrassFiltLyapExps}
Fei Yu and Kang Zuo, \emph{Weierstrass filtration on {T}eichm\"uller curves and
  {L}yapunov exponents}, J. Mod. Dyn. \textbf{7} (2013), no.~2, 209--237.
  \MR{3106711}

\end{thebibliography}

\end{document}